\newtheorem{theorem}{Theorem}[section]
\newtheorem{lemma}[theorem]{Lemma}
\newtheorem{proposition}[theorem]{Proposition}
\newtheorem{corollary}[theorem]{Corollary}
\newtheorem*{theorem*}{Theorem}
\theoremstyle{remark}
\newtheorem{remark}[theorem]{Remark}
\newtheorem{definition}[theorem]{Definition}
\newtheorem{example}[theorem]{Example}
\numberwithin{equation}{section}
\newcommand{\N}{\mathbb{N}}
\newcommand{\R}{\mathbb{R}}
\newcommand{\C}{\mathbb{C}}
\newcommand{\dec}{\textnormal{dec}}
\newcommand{\im}{\operatorname{im }}
\newcommand{\spn}{\operatorname{span}}
\begin{document}
\title{vector spaces with an order unit}

\author{Vern I. Paulsen}

\address{Department of Mathematics \\ University of Houston \\ Houston, TX 77204-3008 \\USA}
\email{vern@math.uh.edu}

\author{Mark Tomforde}

\address{Department of Mathematics \\ University of Houston \\ Houston, TX 77204-3008 \\USA}
\email{tomforde@math.uh.edu}

\thanks{The first author was supported by NSF Grant DMS-0600191}

\date{\today}

\subjclass[2000]{46B40, 46L07}

\keywords{ordered vector spaces, $*$-vector spaces, states, operator systems, $C^*$-algebras}

\begin{abstract}
We develop a theory of ordered $*$-vector spaces with an order unit.  We prove fundamental results concerning positive linear functionals and states, and we show that the order (semi)norm on the space of self-adjoint elements admits multiple extensions to an order (semi)norm on the entire space.  We single out three of these (semi)norms for further study and discuss their significance for operator algebras and operator systems.  In addition, we introduce a functorial method for taking an ordered space with an order unit and forming an Archimedean ordered space.  We then use this process to describe an appropriate notion of quotients in the category of Archimedean ordered spaces.
\end{abstract}

\maketitle

\tableofcontents

\section{Introduction}

Kadison proved in \cite{Kad} that every ordered real vector space with an Archimedean order unit can be represented as a vector subspace of the space of continuous real-valued functions on a compact Hausdorff space via an order-preserving map that carries the order unit to the constant function 1. Such subspaces are generally referred to as \emph{function systems}, and Kadison's result therefore provides a representation theorem for real function systems. 

Kadison's representation theorem motivated Choi and Effros \cite{CE2} to obtain an analogous representation theorem for self-adjoint subspaces of unital C*-algebras that contain the unit. Such subspaces, together with a natural \emph{matrix order}, are called \emph{operator systems}.   Operator systems are complex vector spaces with a $*$-operation, and we refer to these as \emph{$*$-vector spaces}.  

In any $*$-vector space $V$, the self-adjoint elements $V_h$ form a real subspace and one may define an ordering on $V$ by specifying a cone of positive elements $V^+ \subseteq V_h$.  When $(V,V^+)$ is an ordered $*$-vector space, then $(V_h, V^+)$ is an ordered real vector space.  In addition, an element $e \in V$ is an order unit (respectively, an Archimedean order unit) for $(V,V^+)$ if it is an order unit (respectively, an Archimedean order unit) for $(V_h, V^+)$.  The axioms for an operator system require, among other things, that it is an ordered $*$-vector space with Archimedean order unit $e$ and that for each $n \in \N$ the $*$-vector space $M_n(V)$ is ordered with Archimedean order unit $e_n := \left( \begin{smallmatrix} e & & \\ & \ddots & \\  & & e \end{smallmatrix} \right)$.  

Choi and Effros's characterization of operator systems is a fundamental result in the subject of operator spaces and completely bounded maps, and operator systems have also been useful tools in a number of other areas.  Since one can embed an operator space as a corner of an operator system (see, for example, \cite[Theorem~13.4]{Pau}) operator systems often allow one to take a problem involving norms and rephrase it as a more tractable problem involving positivity.  

Despite their utility in other areas, however, there has been very little work done to create a theory for operator systems themselves.  One possible reason for this is that, although there is a wealth of information for ordered real vector spaces \cite{Alf, KG, KN, Peres, Sch}, almost no attention has been paid to ordered complex vector spaces and ordered $*$-vector spaces.  Since operator systems are ordered $*$-vector spaces at each matrix level, a detailed theory of ordered $*$-vector spaces seems to be a fundamental prerequisite for the analysis of operator systems.

In this paper we study ordered $*$-vector spaces.  We do this with
an eye toward operator systems, and lay the groundwork to develop a
categorical theory of operator systems.  With this in mind, there will
be two important themes in our work:  First, we will establish results
for ordered $*$-vector spaces which parallel those in the theory of
ordered real vector spaces.  In some cases, such as with the positive
linear functionals and states, the results in the $*$-vector space
setting will be similar to those in the real setting.  But in other
cases, such as with the order (semi)norm, the $*$-vector space setting
will exhibit new phenomena not seen in the real case.   The second
theme throughout our work will be to focus on ordered spaces with an
order unit that is not necessarily Archimedean.  This is important
because, as we shall see in our analysis, quotients of ordered vector
spaces with Archimedean order units will have order units that need
not be Archimedean.  This is also important for operator systems.  For
example, Fannes, Naechtergaele, and Werner \cite{FNW} have recently associated to each finitely correlated state on a quantum spin chain a matrix-ordered space that satisfies all the axioms of an operator system except that the order units may not be Archimedean.  It is therefore desirable to have a functorial process for forming an Archimedean ordered space from an ordered space with an order unit.

This paper is organized as follows.  In \S\ref{Real-Vec-Spaces} we develop results for ordered real vector spaces with an order unit.  In particular, in \S\ref{subsec-positive-R-functionals} and \S\ref{seminorm-from-order-unit} we give self-contained proofs of some basic results and prove versions of some well-known theorems for Archimedean ordered spaces that will apply to ordered spaces having only an order unit.  For example, the Hahn-Banach Theorem for Archimedean ordered vector spaces is well known (see \cite[Corollary~2.1]{Kad}, for example) but we prove a version in Theorem~\ref{Hahn-Banach-Thm} that holds for ordered spaces with an order unit that is not necessarily Archimedean.  We also establish basic facts for the order norm of an Archimedean space and show that in the non-Archimedean case it is only a seminorm.  In \S\ref{real-Arch-subsec} we introduce a functorial process, called the \emph{Archimedeanization}, for forming an Archimedean ordered space from an ordered space with an order unit.  In \S\ref{Real-quotients-subsec} we discuss quotients in the category of ordered real vector spaces and the category of Archimedean ordered real vector spaces.

In \S\ref{Complex-Vec-Spaces} we develop results for ordered $*$-vector spaces that parallel those for ordered real vector spaces.  In \S\ref{C-functionals-subsec} we prove results for positive functionals and states.  In \S\ref{Arch-Complex-Vec-Spaces} we introduce a version of the Archimedeanization for ordered $*$-vector spaces, and in \S\ref{Complex-quotients-subsec} we consider quotients of ordered $*$-vector spaces.

In \S\ref{metric-from-order-unit} we consider seminorms on ordered $*$-vector spaces that are compatible with the order.  We examine the problem of extending the order (semi)norm on the self-adjoint elements to the entire space, and show that there are multiple ways to do this.  In particular, we find that there is a family of seminorms on a $*$-vector space extending the order seminorm, and we show that the seminorms in this family are all mutually boundedly equivalent, and hence, all induce the same topology. We single out three of these seminorms for more detailed study: the minimal order seminorm in \S\ref{minimal-seminorm-subsec}, the maximal order seminorm in \S\ref{maximal-seminorm-subsec}, and the decomposition order seminorm in \S\ref{decomp-seminorm-subsec}.

In \S\ref{Examples-sec} we examine examples of ordered vector spaces arising in the study of operator algebras, and we analyze the minimal, maximal, and decomposition norms.  In \S\ref{Funct-Sys-subsec} we prove a complex version of Kadison's Theorem, which shows that any Archimedean ordered $*$-vector space $V$ is unitally order isomorphic to a self-adjoint subspace of $C(X)$ containing the constant function 1 for some compact Hausdorff space $X$.  Furthermore, this embedding is an isometry with respect to the minimal order norm on $V$ and the sup norm on $C(X)$.  In \S\ref{unital-op-alg-subsec} we take a unital $C^*$-algebra $A$ and consider the ordered $*$-vector space $(A,A^+)$ with Archimedean order unit $I$.  We describe the minimal, maximal, and decomposition order norms on $A$ and relate them to the operator norm.  In \S\ref{min-max-equal-subsec} we show that for an Archimedean ordered $*$-vector space $V$ the maximal and minimal order seminorms are equal if and only if $V \cong \C$.  Finally, in \S\ref{convex-comb-subsec} we examine convex combinations of the maximal and minimal order norms to show that there are a continuum of order norms on any Archimedean ordered $*$-vector space other than $\C$ and to show that the decomposition order norm is not always a convex combination of the maximal and minimal order norms.

\section{Ordered real vector spaces} \label{Real-Vec-Spaces}

In this section we shall discuss ordered real vector spaces with an order unit and analyze their structure.  For such spaces we analyze the positive linear functionals, prove a version of the Hahn-Banach theorem that allows us to extend certain positive linear functionals, and describe the order (semi)norm on ordered vector spaces with an order unit.  In addition, we introduce a categorical method for taking a space with an order unit and constructing a space with an Archimedean order unit, and use this process to describe quotients of ordered real vector spaces.

\begin{definition} \label{real-ord-v-s-def}
If $V$ is a real vector space, a \emph{cone} in $V$ is a nonempty subset $C \subseteq V$ with the following two properties:
\begin{itemize}
\item[(a)] $a v \in C$ whenever $a \in [0,\infty)$ and $v \in C$
\item[(b)] $v + w \in C$ whenever $v, w \in C$.
\end{itemize}
An \emph{ordered vector space} $(V, V^+)$ is a pair consisting of a real vector space $V$ and a cone $V^+ \subseteq V$ satisfying
\begin{itemize}
\item[(c)] $V^+ \cap -V^+ = \{ 0 \}$.
\end{itemize}
In any ordered vector space we may define a partial ordering (i.e., a reflexive, antisymmetric, and transitive relation) $\geq$ on $V$ by defining $v \geq w$ if and only if $v - w \in V^+$.  Note that this partial ordering is translation invariant (i.e., $v \geq w$ implies $v+x \geq w + x$) and invariant under multiplication by non-negative reals (i.e., $v \geq w$ and $a \in [0, \infty)$ implies $av \geq aw$).  Also note that $v \in V^+$ if and only if $v \geq 0$; for this reason $V^+$ is sometimes called the \emph{cone of positive elements of $V$}.
\end{definition}

\begin{remark}
Although we have used the set $V^+$ to define a partial ordering, we
could just as easily have gone the other direction: If $\geq$ is a
partial ordering on $V$ that is translation invariant and invariant
under multiplication by non-negative reals, then the set $V^+ := \{ v
\in V : v \geq 0 \}$ is a set satisfying $(a), (b)$ and $(c)$ above, and consequently $(V,V^+)$ is an ordered vector space.
\end{remark}

\begin{remark}
There is some disagreement in the literature about the use of the term ``cone''.  While there are authors that conform with the definitions as we have given them (e.g., \cite{KN}, \cite{CE2}), there are other authors (e.g., \cite{KG}, \cite{Peres}, \cite{Sch}) who use the term ``cone'' to refer to a subset of $V$ satisfying $(a)$, $(b)$, and $(c)$ stated above, and use the term ``wedge'' for a subset satisfying only $(a)$ and $(b)$.  Since much of our work is motivated by operator systems, we have chosen to use the same terminology as \cite{CE2}.
\end{remark}

\begin{definition} \label{ord-unit-real-def}
If $(V,V^+)$ is an ordered real vector space, an element $e \in V$ is called an \emph{order unit} for $V$ if for each $v \in V$ there exists a real number $r > 0$ such that $re \geq v$.
\end{definition}

\begin{definition}
If $(V,V^+)$ is an ordered real vector space, then we say that $V^+$ is \emph{full} (or that $V^+$ is a \emph{full cone}) if $V = V^+ - V^+$.
\end{definition}

\begin{lemma} \label{basic-order-unit-facts}
If $(V,V^+)$ is an ordered real vector space with order unit $e$, then
\begin{itemize}
\item[(a)] $e \in V^+$;
\item[(b)] If $v \in V$ and a real number $r >0$ is chosen so that $re \geq v$, then $se \geq v$ for all $s \geq r$;
\item[(c)] If $v_1, \ldots, v_n \in V$, then there exists $r > 0$ such that $re \geq v_i$ for all $1 \leq i \leq n$; 
\item[(d)] $V^+$ is a full cone of $V$;
\item[(e)] If $v_1, \ldots, v_n \in V^+$ and $v_1 + \ldots + v_n = 0$, then $v_1= \ldots = v_n = 0$; and 
\item[(f)] Suppose $v_1, \ldots, v_n \in V^+$ and $a_1, \ldots, a_n \in [0, \infty)$.  If $a_1v_1 + \ldots + a_n v_n = 0$, then for each $1 \leq i \leq n$ either $a_i = 0$ or $v_i = 0$.
\end{itemize}
\end{lemma}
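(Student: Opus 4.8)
The plan is to prove the six statements in the order listed, since each builds on the previous ones together with the defining antisymmetry property that $V^+ \cap -V^+ = \{0\}$. The crucial first step is part $(a)$, from which $(b)$, $(c)$, and $(d)$ follow by elementary cone manipulations.

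For $(a)$, I would apply the order-unit property to the element $-e$: there is some $r > 0$ with $re \geq -e$, i.e.\ $(r+1)e \in V^+$. Since $V^+$ is a cone and $1/(r+1) > 0$, scaling gives $e \in V^+$. Once $e \in V^+$ is in hand, part $(b)$ is immediate: writing $se - v = (re - v) + (s - r)e$, both summands lie in $V^+$ (the first by hypothesis, the second because $s - r \geq 0$ and $e \in V^+$), so closure under addition gives $se \geq v$. Part $(c)$ then follows by choosing $r_i > 0$ with $r_i e \geq v_i$ for each $i$, setting $r = \max_i r_i$, and invoking $(b)$. For $(d)$, given any $v \in V$, I pick $r > 0$ with $re \geq v$ and write $v = re - (re - v)$; since $re \in V^+$ by $(a)$ and $re - v \in V^+$ by the choice of $r$, this exhibits $v$ as a difference of positive elements, so $V = V^+ - V^+$.

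The remaining two parts rest on the antisymmetry condition $V^+ \cap -V^+ = \{0\}$. For $(e)$, from $v_1 + \cdots + v_n = 0$ I would write $v_1 = -(v_2 + \cdots + v_n)$; the right-hand side lies in $-V^+$ while $v_1 \in V^+$, so $v_1 \in V^+ \cap -V^+ = \{0\}$, forcing $v_1 = 0$, and the same argument applied to each index shows all $v_i = 0$. Part $(f)$ is then a direct consequence: each $a_i v_i$ lies in $V^+$ because $a_i \geq 0$ and $v_i \in V^+$, so applying $(e)$ to $\sum_i a_i v_i = 0$ gives $a_i v_i = 0$ for every $i$; if $a_i \neq 0$ then scaling by $1/a_i$ yields $v_i = 0$.

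I do not anticipate a genuine obstacle here, as every assertion reduces to the cone axioms plus the single nontrivial observation in $(a)$. The only points requiring care are to establish $(a)$ before invoking $e \in V^+$ in the later parts, and in $(e)$ to treat every index symmetrically rather than just the first.
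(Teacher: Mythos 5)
Your proposal is correct and takes essentially the same route as the paper: part $(a)$ by scaling $re+e$ inside the cone, parts $(b)$--$(d)$ by elementary cone manipulations, and parts $(e)$--$(f)$ via the antisymmetry condition $V^+ \cap -V^+ = \{0\}$. The only cosmetic differences are that in $(d)$ you use the single decomposition $v = re - (re-v)$ where the paper writes $v = (re+v)/2 - (re-v)/2$, and in $(e)$ you argue symmetrically index-by-index where the paper runs an induction; these are equivalent.
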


\begin{proof}
To see $(a)$ note that there exists $r > 0$ such that $re \geq -e$.  But then $re+e \in V^+$ and $e = (r+1)^{-1}(re+e) \in V^+$.

For $(b)$ we see that if $re \geq v$, then since $s \geq r$ we have that $s-r \geq 0$ and using $(a)$ it follows that $(s-r)e \geq 0$ and hence $(s-r)e + re \geq v +0$ and $se \geq v$.

To verify $(c)$ let $v_1, \ldots, v_n \in V$.  For each $1 \leq i \leq n$ choose a real number $r_i > 0$ such that $r_i e \geq v_i$.  If we let $r := \max \{ r_1, \ldots r_n \}$, then it follows from $(b)$ that $re \geq v_i$ for all $1 \leq i \leq n$.

To show $(d)$ let $v \in V$.  Using $(c)$ we may choose a real number $r >0$ such that $re \geq -v$ and $re \geq v$.  But then $re + v \in V^+$ and $re - v \in V^+$, and it follows that $v = (re +v)/2 - (re-v)/2 \in V^+ - V^+$.

We shall prove $(e)$ by induction.  For $n=1$ the claim holds trivially.  If $v_1 + \ldots + v_n = 0$, then $v_n = -v_1 - \ldots - v_{n-1}$, then $v_n \in V^+ \cap -V^+$ so $v_n = 0$.  Thus $v_1 + \ldots + v_{n-1} = 0$ and by the inductive hypothesis $v_1 = \ldots = v_{n-1} = 0$.

For $(f)$, notice that each $a_iv_i \in V^+$.  Thus Part~$(e)$ implies that $a_iv_i = 0$ for all $i$.  Hence either $a_i = 0$ or $v_i = 0$.
\end{proof}

\begin{definition} \label{Arch-ord-unit-real-def}
If $(V, V^+)$ is an ordered real vector space with an order unit $e$, then we say that $e$ is an \emph{Archimedean order unit} if whenever $v \in V$ with $re+v \geq 0$ for all real $r >0$, then $v \in V^+$.
\end{definition}

Note that having $e$ Archimedean is a way of ensuring that there are no ``non-positive infinitesimals'' in $V$.

\begin{lemma} \label{translated-Archimedean}
Let $(V, V^+)$ be an ordered real vector space with an Archimedean order unit $e$, and let $r_0 \in [0, \infty)$.  If $v \in V$ and $re + v \geq 0$ for all $r > r_0$, then $r_0e+v \geq 0$.
\end{lemma}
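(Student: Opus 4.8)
The plan is to reduce the statement directly to the Archimedean property of $e$ (Definition~\ref{Arch-ord-unit-real-def}) by a simple translation of the scalar parameter. I would introduce the auxiliary element $w := r_0 e + v$, so that the desired conclusion becomes precisely the statement $w \in V^+$, i.e.\ $w \geq 0$. Since $e$ is an Archimedean order unit, it suffices to verify the hypothesis of the Archimedean condition for $w$, namely that $se + w \geq 0$ for \emph{every} real $s > 0$.

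The key computation is the identity $se + w = se + r_0 e + v = (s + r_0)e + v$. For any $s > 0$ we have $s + r_0 > r_0$ (using $r_0 \geq 0$), so the given hypothesis ``$re + v \geq 0$ for all $r > r_0$'' applies with $r = s + r_0$, yielding $(s + r_0)e + v \geq 0$, and hence $se + w \geq 0$. As this holds for all $s > 0$, the Archimedean property forces $w \in V^+$, which is exactly $r_0 e + v \geq 0$.

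There is no real obstacle here: the entire content is the observation that adding a strictly positive amount $s$ to $r_0$ lands us in the range $(r_0, \infty)$ where the hypothesis is assumed, so the Archimedean axiom can be invoked after the shift $w = r_0 e + v$. The only points requiring a word of care are that $r_0 \geq 0$ is genuinely used to guarantee $s + r_0 > r_0$ (so that the hypothesis is nonvacuous and correctly indexed), and that translation invariance of the ordering, recorded in Definition~\ref{real-ord-v-s-def}, is what legitimizes rewriting $se + w$ as $(s+r_0)e + v$ at the level of the order relation.
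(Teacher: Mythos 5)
Your proof is correct and is essentially identical to the paper's own one-line argument: the paper likewise observes that $re+v \geq 0$ for all $r > r_0$ means $se + (r_0e+v) \geq 0$ for all $s > 0$, and then invokes the Archimedean property. One small quibble: the inequality $s + r_0 > r_0$ follows from $s > 0$ alone, so the hypothesis $r_0 \geq 0$ is not actually used at that step (nor anywhere else in the argument); it merely reflects the case of interest in the paper.
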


\begin{proof}
Since $re+v \geq 0$ for all $r > r_0$, we have that $se + (r_0e+v) \geq 0$ for all $s >0$, and thus $r_0e+v \geq 0$.
\end{proof}

\subsection{Positive $\R$-linear functionals and states} \label{subsec-positive-R-functionals}

\begin{definition}
If $(V,V^+)$ is an ordered real vector space, an $\R$-linear functional $f : V \to \R$ is called \emph{positive} if $f(V^+) \subseteq [0, \infty)$. 
\end{definition}

\begin{definition}
If $(V,V^+)$ is an ordered real vector space and $S \subseteq V$, we say that $S$ \emph{majorizes} $V^+$ if for each $v \in V^+$ there exists $w \in S$ such that $w \geq v$. 
\end{definition}

\begin{remark}
Note that if $S$ majorizes $V^+$ and $S \subseteq T$, then $T$ majorizes $V^+$.  In addition, if $e$ is an order unit for $(V,V^+)$, and if $E$ is a subspace of $V$ containing $e$, then $E$ majorizes $V^+$.  This is due to the fact that if $v \in V$, then there exists a real $r > 0$ such that $re \geq v$, and since $E$ is a subspace we have that $re \in E$.
\end{remark}

We wish to prove Theorem~\ref{Hahn-Banach-Thm}, which may be thought of as an analogue of the Hahn-Banach Theorem since it gives conditions under which we may extend a positive $\R$-linear functional on a subspace of $V$ to a positive $\R$-linear functional on all of $V$.  Before proving this theorem we will need the following lemma.

\begin{definition}
Suppose $(V,V^+)$ is an ordered real vector space and that $V^+$ is a
full cone for $V$.  Let $E$ be a subspace of $V$ that majorizes $V^+$,
and let $f : E \to \R$ be a positive $\R$-linear functional.  Given $h
\in V$ set $$L_h := \{ z \in E: z \leq h \}$$ and
$$U_h : = \{ z \in E : h \leq z \}.$$ Since $E$ is a subspace that majorizes $V^+$ and $V^+$ is a full cone, these
sets are nonempty. We define 
$$\ell_f(h) := \sup \{ f(z) : z \in L_h \},$$ and
 $$
u_f(h) := \inf \{ f(z) : z \in U_h \}.$$
\end{definition}

Note that if $z \in L_h$ and $w \in U_h$, then $z \leq w$.  Hence $\ell_f(h)
\le u_f(h).$ Also, if $h \in E$, then $\ell_f(h) =f(h) = u_f(h).$ 

\begin{lemma} \label{extend-pos-codim-1}
Suppose $(V,V^+)$ is an ordered real vector space and that $V^+$ a full cone for $V$.  Also suppose that $E$ is a subspace of $V$ that majorizes $V^+$, that $h \notin E$, and that
$f: E \to \R$ is a positive $\R$-linear functional. Let $W =
\{ ah+v : a \in \R \text{ and } v \in E \}$ be the real subspace
spanned by $E$ and $h.$ If $\gamma \in \R$ and $\ell_f(h) \le \gamma \le
u_f(h)$ and if we define $f_{\gamma} : W \to \R$ by
$$f_{\gamma}(ah+v) := a \gamma + f(v),$$ then
$f_{\gamma}$ is a positive $\R$-linear functional and
$f_{\gamma}|_E = f$. Moreover, if $g:W \to \R$ is a
positive $\R$-linear functional with $g |_E = f,$ then
$\ell_f(h) \le g(h) \le u_f(h).$
\end{lemma}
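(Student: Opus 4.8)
The plan is to prove this as the order-theoretic analogue of the classical one-step Hahn--Banach extension, so the argument splits into three pieces: well-definedness of $f_\gamma$, its positivity, and the two-sided bound on an arbitrary extension. Since the first is essentially bookkeeping, the second is the real content.

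First I would check that $f_\gamma$ is well-defined and linear. Because $h \notin E$, every element of $W$ has a \emph{unique} expression as $ah + v$ with $a \in \R$ and $v \in E$: if $a_1 h + v_1 = a_2 h + v_2$, then $(a_1 - a_2)h = v_2 - v_1 \in E$ forces $a_1 = a_2$ (otherwise $h \in E$) and hence $v_1 = v_2$. Linearity of $f_\gamma$ is then immediate from linearity of $f$, and setting $a = 0$ gives $f_\gamma|_E = f$. I would also record that since $L_h$ and $U_h$ are nonempty and $z \leq w$ whenever $z \in L_h$ and $w \in U_h$, both $\ell_f(h)$ and $u_f(h)$ are finite, so a real $\gamma$ with $\ell_f(h) \leq \gamma \leq u_f(h)$ does exist.

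The heart of the argument is positivity: I must show $f_\gamma(ah + v) = a\gamma + f(v) \geq 0$ whenever $ah + v \in V^+$, and I would argue by cases on the sign of $a$. When $a = 0$ the element lies in $V^+ \cap E$, and positivity of $f$ gives $f(v) \geq 0$. When $a > 0$, dividing $ah + v \geq 0$ by $a$ shows $-v/a \leq h$, so $-v/a \in L_h$ and hence $\gamma \geq \ell_f(h) \geq f(-v/a) = -f(v)/a$; multiplying by $a > 0$ yields $a\gamma + f(v) \geq 0$. When $a < 0$, dividing by $-a > 0$ shows $h \leq -v/a$, so $-v/a \in U_h$ and $\gamma \leq u_f(h) \leq f(-v/a) = -f(v)/a$; multiplying by $a < 0$ reverses the inequality and again gives $a\gamma + f(v) \geq 0$. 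This is precisely where the constraint $\ell_f(h) \leq \gamma \leq u_f(h)$ is used, and the sign bookkeeping here is the only delicate point of the whole lemma.

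For the final (``moreover'') claim, let $g : W \to \R$ be any positive extension of $f$. For $z \in L_h$ we have $h - z \in V^+$, so $g(h) - f(z) = g(h - z) \geq 0$, giving $g(h) \geq f(z)$ and, after taking the supremum over $z \in L_h$, $g(h) \geq \ell_f(h)$. Symmetrically, for $z \in U_h$ we have $z - h \in V^+$, so $f(z) - g(h) = g(z - h) \geq 0$, and taking the infimum over $z \in U_h$ gives $g(h) \leq u_f(h)$. Together these yield $\ell_f(h) \leq g(h) \leq u_f(h)$, completing the proof. I expect the main obstacle to be nothing more than the case-by-case sign analysis in the positivity step; once well-definedness and the finiteness of $\ell_f(h)$ and $u_f(h)$ are in hand, the rest follows mechanically.
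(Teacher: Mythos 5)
Your proof is correct and follows essentially the same route as the paper: the identical three-way case analysis on the sign of $a$ for positivity (placing $-v/a$ in $L_h$ when $a>0$ and in $U_h$ when $a<0$), and the same sandwich argument $f(z) \le g(h) \le f(w)$ for the ``moreover'' clause. The only difference is that you spell out the well-definedness of $f_\gamma$ via uniqueness of the decomposition $ah+v$, a step the paper simply declares straightforward.
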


\begin{proof}
It is straightforward to verify that $f_{\gamma}$ is a
well-defined $\R$-linear functional with $f_{\gamma}|_E =
f$.  It remains to show that $f_{\gamma}$ is positive.

Suppose that $ah+v \in W$ with $ah+v \geq 0$.  We shall show that $f_{\gamma}(ah+v) \geq 0$ by considering three cases.

\noindent \textsc{Case I:} $a=0$.

In this case $v \geq 0$, and $f_{\gamma}(ah+v) := 0 \gamma + f(v) = f(v) \geq 0$ due to the fact that $f$ is positive.

\noindent \textsc{Case II:} $a > 0$.

Since $ah +v \geq 0,$ we have that $h \geq (-1/a)v,$ so that $(-1/a)v
\in L_h.$ Hence $(-1/a)f(v) \leq \ell_f(h) \leq \gamma$. Thus $0
\leq f(v) +a \gamma = f_{\gamma}(ah +v).$

\noindent \textsc{Case III:} $a < 0$.

In this case $-a > 0$ and $(-1/a)v \geq h.$ Thus $(-1/a)v \in U_h$,
and hence $\gamma \leq (-1/a)f(v)$.  It follows that $0 \leq
(-a)[(-1/a)f(v) - \gamma] = f_{\gamma}(v + ah).$

Since $f_{\gamma}(ah+v) \geq 0$ in all cases, we have that  $f_{\gamma}$ is positive.

Finally, if $g:W \to \R$ is a positive $\R$-linear functional with $g |_E = f$, then for any $z \in L_h$ and $w \in U_h$, we have $z \le h \le
w$, and hence $f(z) = g(z) \le g(h) \le g(w) = f(w).$ Taking the
supremum of this inequality over all $z \in L_h$ and the infimum over
all $w \in U_h$ yields $\ell_f(h) \le g(h) \le u_f(h)$, and the
proof is complete.

\end{proof}

The following theorem generalizes \cite[Corollary~2.1]{Kad}.

\begin{theorem}  \label{Hahn-Banach-Thm}
Suppose $(V,V^+)$ is an ordered real vector space and that $V^+$ is a full cone for $V$.  If $E$ is a subspace of $V$ that majorizes $V^+$, and if $f : E \to \R$ is a positive $\R$-linear functional on $E$, then there exists a positive $\R$-linear functional $\widetilde{f} : V \to \R$ such that $\widetilde{f}|_E = f$.
\end{theorem}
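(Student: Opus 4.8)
The plan is to prove this by a standard Zorn's Lemma argument, using Lemma~\ref{extend-pos-codim-1} as the one-step extension that drives the transfinite induction. The hypotheses of the theorem ($V^+$ full, $E$ majorizing $V^+$) are precisely what that lemma needs, and crucially these hypotheses are inherited by every subspace lying between $E$ and $V$: by the remark preceding the discussion, any subspace $F$ with $E \subseteq F$ still majorizes $V^+$, and $V^+$ remains full throughout. This means the lemma can be applied at every stage of the induction without any additional verification.

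Concretely, I would let $\mathcal{P}$ denote the set of all pairs $(F, g)$ in which $F$ is a subspace of $V$ with $E \subseteq F$ and $g : F \to \R$ is a positive $\R$-linear functional satisfying $g|_E = f$. This set is nonempty since $(E, f) \in \mathcal{P}$. I would partially order $\mathcal{P}$ by declaring $(F_1, g_1) \leq (F_2, g_2)$ precisely when $F_1 \subseteq F_2$ and $g_2|_{F_1} = g_1$; one checks routinely that this is a partial order.

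Next I would verify the chain hypothesis of Zorn's Lemma. Given a chain $\{ (F_\alpha, g_\alpha) \}$ in $\mathcal{P}$, set $F := \bigcup_\alpha F_\alpha$; since the $F_\alpha$ are totally ordered by inclusion, $F$ is a subspace of $V$ containing $E$. Define $g : F \to \R$ by $g(x) := g_\alpha(x)$ whenever $x \in F_\alpha$; this is well-defined and $\R$-linear by the compatibility condition in the ordering, it is positive because each $g_\alpha$ is, and $g|_E = f$. Thus $(F, g) \in \mathcal{P}$ is an upper bound for the chain, and Zorn's Lemma yields a maximal element $(F_0, g_0)$ of $\mathcal{P}$.

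Finally I would argue $F_0 = V$ by contradiction. If there were some $h \in V \setminus F_0$, then since $F_0 \supseteq E$ majorizes $V^+$ and $V^+$ is full, Lemma~\ref{extend-pos-codim-1} (applied with $E$ there replaced by $F_0$ and $f$ replaced by $g_0$) produces a positive $\R$-linear extension of $g_0$ to the strictly larger subspace spanned by $F_0$ and $h$, contradicting maximality of $(F_0, g_0)$ in $\mathcal{P}$. Hence $F_0 = V$, and $\widetilde{f} := g_0$ is the desired positive extension. I do not expect any genuine obstacle here: all of the analytic content lives in the codimension-one step of Lemma~\ref{extend-pos-codim-1}, and the only point requiring care is confirming that the hypotheses of that lemma persist at the maximal stage, which is exactly what the majorization remark guarantees.
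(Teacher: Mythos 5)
Your proposal is correct and follows essentially the same argument as the paper: a Zorn's Lemma argument on the poset of positive extensions of $f$, with Lemma~\ref{extend-pos-codim-1} supplying the contradiction at a maximal element, the applicability of that lemma being guaranteed because any subspace containing $E$ still majorizes $V^+$. The only (immaterial) difference is that the paper uses fullness of $V^+$ to select the new element $p$ from $V^+ \setminus \widetilde{E}$, whereas you correctly observe that an arbitrary $h \in V \setminus F_0$ suffices, since the lemma places no positivity requirement on $h$.
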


\begin{proof}
Let $\mathcal{C}$ be the collection of all pairs $(E', f')$ where $E'$ is a subspace of $V$ and $f'$ is a positive $\R$-linear functional on $E'$ with $f'|_{E'} = f$.  Define $(E_1, f_1) \leq (E_2, f_2)$ to mean $E_1 \subseteq E_2$ and $f_2 |_{E_1} = f_1$.  Then $\mathcal{C}$ is a partially ordered set.  If $S = \{ (E_\lambda, f_\lambda) : \lambda \in \Lambda \}$ is a chain in $\mathcal{C}$, then we may define $E_0 := \bigcup_{\lambda \in \Lambda} E_\lambda$, which is a subspace since $S$ is a chain, and we may define $f_0 : E_0 \to \R$ by $f_0(x) := f_\lambda (x)$ if $x \in E_\lambda$.  It is easily checked that $f_0$ is a well-defined positive $\R$-linear functional on $E_0$, and $(E_0,f_0)$ is an upper bound for $S$.  By Zorn's Lemma, $\mathcal{C}$ has a maximal element $(\widetilde{E}, \widetilde{f})$.

We shall now show that $\widetilde{E} = V$.  If $\widetilde{E} \neq V$, then since $V = V^+ - V^+$, there exists $p \in V^+ \setminus \widetilde{E}$.  If we let $W := \spn_\R \{ p \} \cup \widetilde{E}$, then $\widetilde{E}$ is a proper subspace of $W$.  Furthermore, since $E \subseteq \widetilde{E}$ and $E$ majorizes $V^+$, it follows that $\widetilde{E}$ majorizes $V^+$.  Thus Lemma~\ref{extend-pos-codim-1} implies that $\widetilde{f}$ extends to a positive $\R$-linear functional $f_W$ on $W$.  But then $(W,f_W)$ is an element of $\mathcal{C}$ that is strictly greater than $(\widetilde{E}, \widetilde{f})$.  This contradicts the maximality of $(\widetilde{E}, \widetilde{f})$, and consequently we must have that $\widetilde{E} = V$.
\end{proof}

\begin{corollary} \label{Hahn-Banach-Cor}
Let $(V, V^+)$ be an ordered real vector space with order unit $e$.  If $E$ is a subspace of $V$ containing $e$, then any positive $\R$-linear functional $f : E \to \R$ may be extended to a positive $\R$-linear functional $\widetilde{f} : V \to \R$ such that $\widetilde{f}|_E = f$.
\end{corollary}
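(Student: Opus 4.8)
The plan is to reduce this corollary directly to Theorem~\ref{Hahn-Banach-Thm}, whose conclusion is word-for-word what we want; the only work is to check that the two hypotheses of that theorem ($V^+$ full, and $E$ majorizing $V^+$) are automatically satisfied once $V$ possesses an order unit $e$ lying in the subspace $E$.

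First I would verify that $V^+$ is a full cone for $V$. This is exactly the content of Lemma~\ref{basic-order-unit-facts}(d), which guarantees that any ordered real vector space with an order unit satisfies $V = V^+ - V^+$. So this hypothesis is supplied for free by the mere existence of $e$, with no additional argument needed.

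Next I would verify that $E$ majorizes $V^+$. Here I would invoke the observation recorded in the remark following the definition of ``majorizes'': given any $v \in V^+ \subseteq V$, the order-unit property of $e$ furnishes a real $r > 0$ with $re \geq v$, and since $E$ is a subspace containing $e$ we have $re \in E$. Thus every element of $V^+$ is dominated by an element of $E$, which is precisely the majorizing condition. With both hypotheses now in hand, Theorem~\ref{Hahn-Banach-Thm} applies directly to the positive $\R$-linear functional $f : E \to \R$ and yields a positive $\R$-linear extension $\widetilde{f} : V \to \R$ with $\widetilde{f}|_E = f$.

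I do not expect any genuine obstacle here: all the substantive work—the codimension-one extension of Lemma~\ref{extend-pos-codim-1} and the Zorn's Lemma exhaustion argument—has already been carried out in establishing Theorem~\ref{Hahn-Banach-Thm}. The only point that requires even a moment's care is noting that a subspace containing the order unit necessarily majorizes $V^+$; this is what makes the order-unit hypothesis of the corollary a convenient special case of the more general full-cone/majorizing hypotheses of the theorem.
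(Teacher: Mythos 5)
Your proposal is correct and is essentially identical to the paper's own proof: both verify that $V^+$ is full via Lemma~\ref{basic-order-unit-facts}(d) and that $E$ majorizes $V^+$ because it contains the order unit $e$, then invoke Theorem~\ref{Hahn-Banach-Thm}. No gaps.
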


\begin{proof}
Note that $V^+$ is a full cone for $V$ by Lemma~\ref{basic-order-unit-facts}(d).  In addition, $E$ majorizes $V^+$ since $E$ contains the order unit $e$.
\end{proof}

\begin{definition} Let $(V,V^+)$ be an ordered real vector space with
  order unit $e$. A positive $\R$-linear functional $f:V \to \R$ is
  called a {\em state} if $f(e) = 1$, and we call the set of all states
  on $V$ the {\em state space} of $V.$
\end{definition}

By the above corollary, the $\R$-linear functional $f(re) =r$ defined on
$E= \spn_{\R} \{e \}$ is positive and can be extended to all of $V$. Hence the state space of $V$ is always nonempty.

\begin{theorem} \label{interval-thm}
Let $(V,V^+)$ be an ordered real vector space with
  order unit $e.$ If $v \in V,$ then $\alpha : = \sup \{ r \in \R : re \leq v
  \} \leq \inf \{ s \in \R : v \le se \} = : \beta$ and for every real number
  $\gamma \in [ \alpha, \beta]$ there exists a state
  $f_{\gamma} : V \to \R$ with $f_{\gamma}(v) = \gamma.$
\end{theorem}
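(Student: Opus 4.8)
The plan is to reduce the statement to the one-step extension result Lemma~\ref{extend-pos-codim-1} applied to the span of the order unit, and then to globalize with Corollary~\ref{Hahn-Banach-Cor}. First I would dispose of the degenerate case $V = \{0\}$ (where no state exists and the claim is vacuous), so that $e \neq 0$. The key preliminary observation is that for a scalar $t \in \R$ one has $te \geq 0$ if and only if $t \geq 0$: if $t < 0$, then $te \in V^+$ while $(-t)e \in V^+$ by Lemma~\ref{basic-order-unit-facts}(a), forcing $te \in V^+ \cap -V^+ = \{0\}$ and hence $e = 0$, a contradiction. Consequently the functional $f_0 : E_0 \to \R$ on $E_0 := \spn_\R \{ e \}$ given by $f_0(re) := r$ is well defined and positive, so it is a state on $E_0$.

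Next I would identify $\alpha$ and $\beta$ with the quantities $\ell_{f_0}(v)$ and $u_{f_0}(v)$ attached to $f_0$. Since $V^+$ is a full cone by Lemma~\ref{basic-order-unit-facts}(d) and $E_0$ majorizes $V^+$ (it contains the order unit $e$), these quantities are defined, and unwinding $f_0(re) = r$ gives
$$\ell_{f_0}(v) = \sup \{ r \in \R : re \leq v \} = \alpha \quad \text{and} \quad u_{f_0}(v) = \inf \{ s \in \R : v \leq se \} = \beta.$$
The inequality $\alpha \leq \beta$ is then immediate from the observation preceding Lemma~\ref{extend-pos-codim-1}, which records that $\ell_{f_0}(v) \leq u_{f_0}(v)$; both $L_v$ and $U_v$ are nonempty here because $e$ is an order unit. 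If $v \in E_0$, say $v = ce$ with $c \in \R$, then $\alpha = \beta = c$, any state $f$ satisfies $f(v) = c\,f(e) = c = \gamma$, and such a state exists by the discussion following Corollary~\ref{Hahn-Banach-Cor}.

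In the remaining case $v \notin E_0$, I fix $\gamma \in [\alpha, \beta] = [\ell_{f_0}(v), u_{f_0}(v)]$ and apply Lemma~\ref{extend-pos-codim-1} with $E = E_0$ and $h = v$. This produces a positive $\R$-linear functional $f_\gamma$ on $W := \spn_\R\{e, v\}$ with $f_\gamma|_{E_0} = f_0$ and $f_\gamma(v) = \gamma$; in particular $f_\gamma(e) = f_0(e) = 1$. Since $W$ contains $e$, Corollary~\ref{Hahn-Banach-Cor} extends $f_\gamma$ to a positive $\R$-linear functional $\widetilde{f_\gamma}$ on all of $V$ with $\widetilde{f_\gamma}(e) = 1$ and $\widetilde{f_\gamma}(v) = \gamma$, which is the desired state. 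I do not anticipate a serious obstacle: the content of the theorem has essentially been pre-packaged into the $\ell_f$/$u_f$ machinery, so the only real work is the bookkeeping in the middle step, namely verifying the hypotheses of Lemma~\ref{extend-pos-codim-1} and correctly matching $\alpha, \beta$ with $\ell_{f_0}(v), u_{f_0}(v)$.
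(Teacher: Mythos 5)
Your proposal is correct and follows essentially the same route as the paper: apply Lemma~\ref{extend-pos-codim-1} with $E = \spn_\R\{e\}$, $f(re) = r$, and $h = v$, identify $\alpha = \ell_f(v)$ and $\beta = u_f(v)$, and then extend to all of $V$ by Corollary~\ref{Hahn-Banach-Cor}. Your extra bookkeeping (well-definedness and positivity of $f_0$, and the separate case $v \in \spn_\R\{e\}$, which the lemma's hypothesis $h \notin E$ technically excludes) only makes explicit details the paper's proof leaves implicit.
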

\begin{proof} 
We shall apply Lemma~\ref{extend-pos-codim-1} with $E := \{ re : r \in
\R \},$ $f:E \to \R$ given by $f(re) =r$, and $h:=v.$ Note that in this
case, $\alpha = \ell_f(v)$ and $\beta = u_f(v).$ Hence, by the lemma,
for each $\gamma \in [\alpha, \beta]$ we have that the $\R$-linear functional
$g_{\gamma}$ on $W = \{ re +tv: r,t \in \R \}$ given by
$g_{\gamma}(re+tv) = r + t \gamma$ is positive.

By Corollary~\ref{Hahn-Banach-Cor}, the positive $\R$-linear functional $g_{\gamma} : W \to \R$ can be extended to a positive $\R$-linear functional $f_{\gamma} : V \to \R$. We then have that
$f_{\gamma}(e) =1$ and $f_{\gamma}(v) = \gamma$, so that $f_{\gamma}$
is the desired state.
\end{proof}

\begin{remark} \label{interval-rem} Let $(V,V^+)$ be an ordered real vector space with order unit $e,$  let $v \in V$, and let $\alpha, \beta$ be as in Theorem~\ref{interval-thm}. Then it is readily seen that
$\{ f(v): f \text{ a state on } V \}$ is the closed interval $[\alpha, \beta].$
\end{remark}

\begin{proposition} \label{pos-R-funct-separating}
Let $(V, V^+)$ be an ordered real vector space with Archimedean order unit $e$.  If $v \in V$ and $f(v) = 0$ for all states $f : V \to \R$, then $v = 0$.
\end{proposition}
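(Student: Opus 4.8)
The plan is to use Theorem~\ref{interval-thm} to translate the hypothesis about states into a purely order-theoretic statement about $v$, and then to invoke the Archimedean property of $e$ to conclude. Recall that Theorem~\ref{interval-thm} associates to $v$ the two numbers $\alpha = \sup\{r \in \R : re \leq v\}$ and $\beta = \inf\{s \in \R : v \leq se\}$, and guarantees that every $\gamma \in [\alpha,\beta]$ arises as $f(v)$ for some state $f$. Since the hypothesis forces $f(v) = 0$ for \emph{every} state $f$, the interval $[\alpha,\beta]$ can contain no value other than $0$, and hence $\alpha = \beta = 0$. (Equivalently, one may quote Remark~\ref{interval-rem}, which identifies $\{f(v) : f \text{ a state}\}$ with $[\alpha,\beta]$ and so gives $[\alpha,\beta] = \{0\}$ at once.)

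Next I would unpack what $\alpha = \beta = 0$ says about the ordering. By Lemma~\ref{basic-order-unit-facts}(b) together with $e \in V^+$, the set $\{s : v \leq se\}$ is upward closed and the set $\{r : re \leq v\}$ is downward closed. From $\beta = \inf\{s : v \leq se\} = 0$ and upward closure I would deduce that $v \leq se$, that is $se + (-v) \in V^+$, for every $s > 0$. Likewise, from $\alpha = \sup\{r : re \leq v\} = 0$ and downward closure I would deduce $re \leq v$ for every $r < 0$; writing $r = -s$, this says $se + v \in V^+$ for every $s > 0$.

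Finally, I would apply Definition~\ref{Arch-ord-unit-real-def} twice. Applied to the element $-v$ together with the inequalities $se + (-v) \geq 0$ for all $s > 0$, the Archimedean property yields $-v \in V^+$; applied to the element $v$ together with $se + v \geq 0$ for all $s > 0$, it yields $v \in V^+$. Thus $v \in V^+ \cap (-V^+)$, and by property (c) of Definition~\ref{real-ord-v-s-def} this intersection is $\{0\}$, so $v = 0$.

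I expect the only genuine content of the argument to be this last step: the passage from ``positive up to an arbitrarily small multiple of $e$'' to genuine positivity is exactly where the Archimedean hypothesis is indispensable, and indeed the proposition fails without it. The preceding steps are routine once the definitions of $\alpha$ and $\beta$ are interpreted carefully; the only mild care needed concerns the monotonicity (upward and downward closure) of the two sets, which follows directly from Lemma~\ref{basic-order-unit-facts}.
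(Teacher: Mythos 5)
Your proposal is correct and follows essentially the same route as the paper's own proof: both deduce $\alpha=\beta=0$ from Theorem~\ref{interval-thm} (equivalently Remark~\ref{interval-rem}), unpack this to $se+v\ge 0$ and $se-v\ge 0$ for all $s>0$, and apply the Archimedean property twice to get $v\in V^+\cap(-V^+)=\{0\}$. The only difference is that you spell out the upward/downward closure of the sets defining $\alpha$ and $\beta$, which the paper leaves implicit.
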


\begin{proof} By the above remark, if $f(v) =0$ for every state, then $\alpha = \beta = 0.$ Since $\alpha =0$ we have that $v \geq (-r)e$ for all $r>0$; that is, $re+v \ge 0$ for every $r>0.$  Thus by the Archimedean property we have that $v \in V^+.$  However, since $\beta = 0$, we also have that $re \geq v$ for every $r > 0,$ and so again by the Archimedean property $-v \in V^+.$  Thus $v=0$.
\end{proof}

\begin{proposition} \label{pos-for-states-implies-pos-prop}
Let $(V, V^+)$ be an ordered real vector space with Archimedean order unit $e$.  If $v \in V$ and $f(v) \geq 0$ for every state $f : V \to \R$, then $v \in V^+$.
\end{proposition}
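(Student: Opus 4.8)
The plan is to reduce the claim to a single application of the Archimedean property, by locating a state that attains the infimum of the values $f(v)$. Following the pattern of the proof of Proposition~\ref{pos-R-funct-separating}, I would first set $\alpha := \sup\{ r \in \R : re \leq v\}$ and $\beta := \inf\{ s \in \R : v \leq se\}$ as in Theorem~\ref{interval-thm}. By that theorem (or equivalently by Remark~\ref{interval-rem}), the set $\{ f(v) : f \text{ a state on } V\}$ equals the closed interval $[\alpha,\beta]$; in particular there is a state $f_\alpha$ with $f_\alpha(v) = \alpha$. The hypothesis that $f(v) \geq 0$ for every state then forces $\alpha = f_\alpha(v) \geq 0$. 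Note that, unlike in Proposition~\ref{pos-R-funct-separating}, we only extract information about the left endpoint $\alpha$ and make no use of $\beta$.

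Next I would translate the inequality $\alpha \geq 0$ into the hypothesis appearing in the definition of an Archimedean order unit. The key observation is that the set $\{ r \in \R : re \leq v\}$ is downward closed: if $re \leq v$ and $r' \leq r$, then $(r-r')e \geq 0$ by Lemma~\ref{basic-order-unit-facts}(a), and adding this to $v - re \geq 0$ gives $v - r'e \geq 0$, i.e. $r'e \leq v$. Since this set has supremum $\alpha \geq 0$, every $r < \alpha$ lies in it, and in particular $re \leq v$ for every real $r < 0$. Writing $s := -r > 0$, this is exactly the statement that $se + v \geq 0$ for all $s > 0$.

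Finally, by Definition~\ref{Arch-ord-unit-real-def} the Archimedean property of $e$ converts the condition ``$se + v \geq 0$ for all $s > 0$'' into $v \in V^+$, which completes the argument. I expect no serious obstacle here, since all the heavy machinery—existence of states with prescribed values on $v$ (Theorem~\ref{interval-thm}, resting in turn on Corollary~\ref{Hahn-Banach-Cor}) and the Archimedean property—is already available. The only point requiring care is the bookkeeping with the downward-closed set, namely recognizing that $\alpha \geq 0$ alone suffices to feed the Archimedean hypothesis, so that a single application of the Archimedean condition yields the conclusion.
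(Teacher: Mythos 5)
Your proposal is correct and follows essentially the same route as the paper's own proof: take $\alpha = \sup\{r : re \leq v\}$, use Theorem~\ref{interval-thm} to produce a state attaining $\alpha$, conclude $\alpha \geq 0$ from the hypothesis, and feed the resulting inequalities $se + v \geq 0$ (for all $s>0$) into the Archimedean property. The only difference is that you spell out the downward-closedness of $\{r : re \leq v\}$, a small step the paper leaves implicit.
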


\begin{proof}
Let  $\alpha : = \sup \{ r: re \leq v \}$.  Then by Theorem~\ref{interval-thm} there exists a state $f_\alpha : V \to \R$ such that $f_\alpha (v) = \alpha$.  By hypothesis we have that $\alpha \geq 0$.  Thus $re \leq v$ for all $r < 0$, which implies that $se + v \geq 0$ for all $s > 0$.  By the Archimedean property we have that $v \geq 0$.
\end{proof}

\subsection{The order seminorm} \label{seminorm-from-order-unit}

We now describe a natural seminorm on an ordered vector space with an order unit.  This seminorm will be a norm if the order unit is Archimedean, and it can be used to define a topology on the vector space.

\begin{definition} \label{order-seminorm-def}
Let $(V,V^+)$ be an ordered real vector space with order unit $e$.  For $v \in V$, let $$\| v \| := \inf \{ r \in \mathbb{R} : re + v \geq 0 \text{ and } re-v \geq 0 \}.$$  Note that $\| \cdot \|$ depends on the choice of the order unit $e$.  Also note that since $e$ is an order unit, the set $\{ r \in \mathbb{R} : re + v \geq 0 \text{ and } re-v \geq 0\}$ is nonempty and thus the infimum exists and is a real number.  We call $\| \cdot \|$ the \emph{order seminorm} on $V$ determined by $e$.  
\end{definition}

\begin{remark}
It is not hard to see that $\inf \{r: re+v \ge 0 \text{ and } re -v
\ge 0 \} = \max \{ |\alpha|, |\beta| \}$, where  $\alpha$ and $\beta$
 are defined as in Theorem~\ref{interval-thm}.  To see this, note that
 $re+v \ge 0$ and $re-v \ge 0$ if and only if $-re \le v \le +re$ and
 such an $r$ is necessarily non-negative.
 Thus, for any such $r$ we have $-r \le \alpha \le \beta \le +r$ and
 so $r \ge \max \{ |\alpha|, |\beta| \}.$ Thus $\inf \{ r: re+v \ge 0
 \text{ and } re -v \ge 0 \} \ge \max \{ |\alpha|, |\beta| \}.$
 Conversely, if $t> \max \{ |\alpha|, |\beta| \},$ then $-t < \alpha$ and
 $\beta < +t$, and hence $-te \le v \le +te$, which proves the other
 inequality.
In \cite[Lemma~2.3]{Kad} Kadison proves that $\|v\| = \max \{
|\alpha|, |\beta| \}.$ is a norm in the case that the order unit is
Archimedean. Our next result includes this fact.
\end{remark}

\begin{proposition} \label{seminorm}  \label{Arch-gives-norm-cor}
If $(V,V^+)$ is an ordered real vector space with order unit $e$, and $\| \cdot \|$ is as in Definition~\ref{order-seminorm-def}, then $\| \cdot \|$ is a seminorm on $V$ and for each $v \in V$, we have that $$\|v\| = \sup \{ |f(v)| \, : \, f  : V \to \R \text{ is a state} \}.$$
Moreover, when $e$ is an Archimedean order unit, $\| \cdot \|$ is a norm.\end{proposition}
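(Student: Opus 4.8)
The plan is to prove the supremum formula first, since once it is in hand both the seminorm axioms and the Archimedean norm claim follow with little effort. By the Remark immediately preceding the proposition we already know that $\|v\| = \max\{|\alpha|,|\beta|\}$, where $\alpha$ and $\beta$ are the quantities from Theorem~\ref{interval-thm}. By Remark~\ref{interval-rem}, the set of values $\{f(v) : f \text{ a state on } V\}$ is exactly the closed interval $[\alpha,\beta]$. Since $\alpha \le \beta$, the map $t \mapsto |t|$ attains its supremum on $[\alpha,\beta]$ at one of the endpoints, so $\sup_{t \in [\alpha,\beta]}|t| = \max\{|\alpha|,|\beta|\}$. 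Combining these three observations yields $\sup\{|f(v)| : f \text{ a state}\} = \max\{|\alpha|,|\beta|\} = \|v\|$, which is the asserted formula.

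With the formula established, I would deduce that $\|\cdot\|$ is a seminorm by appealing to the standard fact that a finite pointwise supremum of seminorms is again a seminorm. For each fixed state $f$ the map $v \mapsto |f(v)|$ is a seminorm because $f$ is $\R$-linear, so homogeneity, $\|av\| = \sup_f |f(av)| = |a|\sup_f|f(v)| = |a|\,\|v\|$, and subadditivity, $\|v+w\| = \sup_f|f(v)+f(w)| \le \sup_f|f(v)| + \sup_f|f(w)| = \|v\| + \|w\|$, both pass to the supremum. Nonnegativity is immediate, and finiteness holds because, as noted in Definition~\ref{order-seminorm-def}, the defining infimum is a genuine real number (equivalently $|f(v)| \le \|v\| < \infty$ for every state).

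Finally, for the Archimedean case I would show that $\|v\| = 0$ forces $v = 0$. If $\|v\| = 0$, then for any $r > 0$ the defining infimum provides some $r'$ with $0 \le r' < r$ satisfying $r'e + v \ge 0$ and $r'e - v \ge 0$; by Lemma~\ref{basic-order-unit-facts}(b) (applied to $-v$ and to $v$) this upgrades to $re + v \ge 0$ and $re - v \ge 0$ for every $r > 0$. The Archimedean property then gives $v \in V^+$ and $-v \in V^+$, whence $v \in V^+ \cap -V^+ = \{0\}$ by the cone axiom, so $v = 0$. Alternatively, $\|v\| = 0$ forces $\alpha = \beta = 0$, hence $f(v) = 0$ for every state, and Proposition~\ref{pos-R-funct-separating} gives $v = 0$.

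The bulk of the genuine work has already been carried out in Theorem~\ref{interval-thm} and the two remarks, so I expect no serious obstacle here; the proposition is essentially a corollary of the state-space description of $[\alpha,\beta]$. The only points requiring a moment's care are confirming that $\sup_{t\in[\alpha,\beta]}|t| = \max\{|\alpha|,|\beta|\}$ and that the supremum defining $\|\cdot\|$ is finite, after which the seminorm axioms reduce to the routine passage of linearity-induced seminorm inequalities through a finite supremum.
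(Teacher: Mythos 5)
Your proposal is correct and follows essentially the same route as the paper: both establish the supremum formula $\|v\| = \sup\{|f(v)| : f \text{ a state}\}$ from the remark identifying $\|v\| = \max\{|\alpha|,|\beta|\}$ together with Remark~\ref{interval-rem}, then read off homogeneity and subadditivity from that formula, and handle the Archimedean case via Proposition~\ref{pos-R-funct-separating} (your ``alternative'' argument is exactly the paper's, and your direct argument via Lemma~\ref{basic-order-unit-facts}(b) and the cone axiom is a harmless variant). The only cosmetic slip is the phrase ``finite pointwise supremum of seminorms''---the family of states is generally infinite, and what matters is pointwise finiteness of the supremum, which you do verify.
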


\begin{proof}
If $v \in V$, and if $\alpha$ and $\beta$ are defined as in Theorem~\ref{interval-thm}, then by Remark~\ref{interval-rem}, we have that $\sup \{ |f(v)|: f \text{ is a state on } V \} = \sup \{ |\gamma|: \alpha \le \gamma \le \beta \} = \max \{ |\alpha|, |\beta| \} = \|v\|.$  Thus we have $\|tv\| = \sup \{ |f(tv)| : f \text{ is a state on } V \} = \sup
\{|t||f(v)| : f \text{ is a state on } V \} = |t| \|v\|.$ For $v,w \in
V,$ we have that $\|v+w\| = \sup \{ |f(v+w)| : f \text{ is a state on
} V \} \le \sup \{ |f(v)| + |f(w)| : f \text{ is a state on } V \} \le
\|v\| + \|w\|.$ Hence $\| \cdot \|$ is a seminorm.

When $e$ is an Archimedean order unit,
Proposition~\ref{pos-R-funct-separating} shows that $\|v\|=0$
implies $v=0$, and so $\| \cdot\|$ is a norm.
\end{proof}

\begin{remark} 
Since the order seminorm is in fact a norm when $e$ is an Archimedean order unit, we shall henceforth refer to it as the {\em order norm} in this context.
\end{remark}

\begin{remark}
When $(V,V^+)$ is an ordered vector space with order unit $e$, the
above result shows that if $e$ is Archimedean, then the order seminorm $\| \cdot \|$ is a norm.  However, the converse does not necessarily hold.  For example, if $V = \C^2$ and we set $V^+ = \{ (x,y) : x > 0 \text{ and } y >0 \} \cup \{ 0 \}$, then $e = (1,1)$ is an order unit for $(V,V^+)$ and one can check that the order seminorm $\| \cdot \|$ is a norm.  However, $e$ is not Archimedean: one has $re + (1,0) \in V^+$ for all $r >0$, but $(1,0) \notin V^+$.
\end{remark}

\begin{definition} \label{order-seminorm-top-def}
Let $(V,V^+)$ be an ordered real vector space with order unit $e$.  The \emph{order topology} on $V$ is the topology induced by the order seminorm $\| \cdot \|$; i.e., the topology with a basis consisting of balls $B_{\epsilon}(v) := \{ w \in V : \|w-v \| < \epsilon\}$ for $v \in V$ and $\epsilon >0$.  Note that since $\| \cdot \|$ is not necessarily a norm, this topology is not necessarily Hausdorff.
\end{definition}

Note that if $V$ and $W$ are two spaces endowed with seminorms, then a linear map from $V$ into $W$ is continuous if and only if it is bounded.

\begin{proposition} \label{positive-R-functionals-cts}
Let $(V,V^+)$ be an ordered real vector space with an order unit $e$, and let $\| \cdot \|$ be the order seminorm on $V$ determined by $e$. If $f : V \to \R$ is a positive $\R$-linear functional, then $f$ is continuous with respect to the topology induced by $\| \cdot \|$, and $\| f \| = f(e)$.  In particular, $$|f(v)| \leq f(e) \| v \| \quad \text{ for all $v \in V$.}$$
\end{proposition}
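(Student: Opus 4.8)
The plan is to prove the explicit inequality $|f(v)| \le f(e)\|v\|$ directly from the definition of the order seminorm together with the positivity of $f$; the assertions about continuity and the value of $\|f\|$ will then follow with little extra work. First I would record that $f(e) \ge 0$: by Lemma~\ref{basic-order-unit-facts}(a) we have $e \in V^+$, and since $f$ is positive this gives $f(e) \ge 0$. Now fix $v \in V$ and let $r \in \R$ be any number with $re + v \ge 0$ and $re - v \ge 0$. Then $re + v$ and $re - v$ both lie in $V^+$, so positivity of $f$ yields $rf(e) + f(v) \ge 0$ and $rf(e) - f(v) \ge 0$; rearranging, $-rf(e) \le f(v) \le rf(e)$, that is, $|f(v)| \le rf(e)$. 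Since this holds for every $r$ in the set $\{ r : re+v \ge 0 \text{ and } re - v \ge 0 \}$, taking the infimum of the right-hand side over that set gives $|f(v)| \le f(e)\|v\|$, which is the claimed inequality.

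From this inequality it follows at once that $f$ is bounded with respect to $\|\cdot\|$, and hence continuous, using the remark (recorded just before the statement) that a linear map between seminormed spaces is continuous if and only if it is bounded. The same inequality also shows that $\|f\| = \sup\{ |f(v)| : \|v\| \le 1 \} \le f(e)$.

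For the reverse inequality I would exhibit $e$ inside the closed unit ball. Taking $r = 1$ in the definition of $\|e\|$, we have $(1+1)e = 2e \ge 0$ and $(1-1)e = 0 \ge 0$ (again using $e \in V^+$), so $\|e\| \le 1$. Hence $e$ is an admissible point in the supremum defining $\|f\|$, giving $\|f\| \ge |f(e)| = f(e)$. Combining the two inequalities yields $\|f\| = f(e)$. I do not anticipate any serious obstacle: the argument is entirely elementary once $e \in V^+$ is in hand, and the only steps that require a little care are the passage from ``$|f(v)| \le rf(e)$ for each admissible $r$'' to ``$|f(v)| \le f(e)\|v\|$'' via the infimum, and the verification that $\|e\| \le 1$ so that the supremum defining $\|f\|$ is at least $f(e)$.
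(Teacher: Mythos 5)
Your proof is correct and follows essentially the same route as the paper: apply positivity of $f$ to the elements $re+v \geq 0$ and $re-v \geq 0$ to get $|f(v)| \leq r f(e)$, take the infimum over admissible $r$ to obtain $|f(v)| \leq f(e)\|v\|$, and then deduce boundedness and $\|f\| \leq f(e)$, with the reverse inequality coming from $e$ lying in the unit ball. The only cosmetic difference is that the paper cites $\|e\| = 1$ (established in its characterization of the order seminorm) where you verify $\|e\| \leq 1$ directly from the definition, which is all that is actually needed and makes your argument slightly more self-contained.
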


\begin{proof}
Let $v \in V$.  If $r > \|v\|,$ then we have that $-re \leq v \leq +re$.  Since $f$ is positive and $\R$-linear, it follows that $-f(re) \leq f(v) \leq f(re)$, and thus $|f(v)| \leq f(re)$.  Using the $\R$-linearity, we then have $$|f(v)| \leq f(e) \| v \|.$$  It follows that $f$ is bounded, and hence continuous for the topology coming from the norm $\| \cdot \|$.  Furthermore, the above inequality shows that $\| f \| \leq f(e)$.  Because $\| e \| = 1$, we have that $\| f \| = f(e)$.
\end{proof}

We obtain a partial converse to the above proposition, which shows that the positive $\R$-linear functionals on an Archimedean ordered space are precisely the $\R$-linear functionals that are continuous in the order norm topology with $\| f \| = f(e)$.

\begin{proposition} \label{certain-R-functionals-cts-implies-pos}
Let $(V,V^+)$ be an ordered real vector space with order unit $e$, and let $\| \cdot \|$ be the order seminorm on $V$ determined by $e$. If $f : V \to \R$ is an $\R$-linear functional that is continuous with respect to the order norm topology, and if $\| f \| = f(e)$, then $f$ is positive.
\end{proposition}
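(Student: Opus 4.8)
The goal is to show $f(v) \ge 0$ for every $v \in V^+$. The hypothesis $\|f\| = f(e)$, together with the fact (noted just before the statement) that a linear map between seminormed spaces is continuous if and only if it is bounded, gives the one quantitative tool available: $|f(w)| \le \|f\|\,\|w\| = f(e)\,\|w\|$ for all $w \in V$. The plan is to feed into this inequality an element $w$ manufactured from $v$ and $e$ whose order seminorm is controlled, chosen so that the resulting estimate forces $f(v) \ge 0$. By analogy with the $C^*$-algebra inequality $\|1-a\| \le 1$ for $0 \le a \le 1$, the natural candidate is $w = te - v$ for $t$ slightly larger than $\|v\|$.

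Concretely, I would first isolate the following order-theoretic estimate: if $v \in V^+$ and $t > \|v\|$, then $\|te - v\| \le t$. By Definition~\ref{order-seminorm-def} it suffices to check, for every $r > t$, that $re + (te - v) \ge 0$ and $re - (te - v) \ge 0$. The first rewrites as $v \le (r+t)e$. Here I use the computation in the remark following Definition~\ref{order-seminorm-def}, which identifies $\|v\| = \max\{|\alpha|,|\beta|\} \ge \beta = \inf\{s : v \le se\}$ with $\beta$ as in Theorem~\ref{interval-thm}; since $r + t > \|v\| \ge \beta$, the definition of the infimum yields some $s_0$ with $\beta \le s_0 < r+t$ and $v \le s_0 e$, and then $v \le s_0 e \le (r+t)e$ because $e \in V^+$ by Lemma~\ref{basic-order-unit-facts}(a) makes $(r+t-s_0)e \ge 0$. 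The second inequality rewrites as $(r-t)e + v \ge 0$, which holds at once because $r > t$ forces $(r-t)e \ge 0$ and $v \ge 0$.

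With this estimate in hand the conclusion is immediate: for any $t > \|v\|$,
\[
f(te) - f(v) = f(te - v) \le |f(te - v)| \le \|f\|\,\|te - v\| \le f(e)\cdot t = f(te),
\]
so that $f(v) \ge 0$; as $v \in V^+$ was arbitrary, $f$ is positive. The only genuine content lies in the order-seminorm bound $\|te - v\| \le t$, and within it the one point that must be handled with care — precisely the step that does \emph{not} invoke the Archimedean property — is deducing $v \le se$ for every $s > \|v\|$ directly from the definition of the infimum $\beta$ together with $e \ge 0$. Everything else is routine bookkeeping with the translation invariance of the order.
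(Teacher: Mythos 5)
Your proof is correct and takes essentially the same route as the paper's: both apply the hypothesis in the form $|f(w)| \le f(e)\,\|w\|$ to the element $w = te - v$ (the paper writes $re - v$) after establishing the order-seminorm estimate $\|te - v\| \le t$ for $t > \|v\|$, and then cancel to get $f(v) \ge 0$. The only difference is cosmetic: you verify the estimate $\|te - v\| \le t$ directly from the definition of the infimum, whereas the paper deduces it from $0 \le v \le te$ and monotonicity of the order seminorm.
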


\begin{proof}
Let $v \in V^+$.  For any $r > \|v\|$ we have that $0 \leq v \leq re$, and hence $0 \leq re - v \leq re$.  It follows that $\left\| (re - v)  \right\| \leq r$.  Since $f$ is continuous with $\| f \| = f(e)$, we have $$| f( re - v ) | \leq \| f \| \left\| (re - v)  \right\| \leq f(e) r $$ and hence $f( r e - v )  \leq rf(e) $ and $rf(e) - f(v) \leq rf(e)$, so that $0 \le f(v)$.  Thus $f$ is positive.
\end{proof}

We now characterize the order seminorm.

\begin{theorem} \label{norm-char} Let $(V,V^+)$ be an ordered real vector space with order unit $e$.  Then the order seminorm $\| \cdot \|$ is the unique seminorm on $V$ satisfying the following three conditions.
\begin{enumerate}
\item $\| e \| = 1$,
\item if $-w \le v \le w$, then $\| v \| \leq \| w \|$, and
\item if $f:V \to \R$ is a state, then $|f(v)| \leq  \|v\| $.
\end{enumerate}
\end{theorem}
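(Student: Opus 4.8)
The plan is to establish two things: first, that the order seminorm $\|\cdot\|$ satisfies conditions (1)--(3), and second, that any seminorm satisfying these three conditions must coincide with $\|\cdot\|$. The first part is largely a matter of assembling results already proved. Condition (1) follows because $\|e\| = \max\{|\alpha|,|\beta|\}$ with $\alpha=\beta=1$ for $v=e$ (equivalently, $re+e\ge 0$ and $re-e\ge 0$ precisely when $r\ge 1$). Condition (3) is immediate from Proposition~\ref{seminorm}, which identifies $\|v\|$ as $\sup\{|f(v)| : f \text{ a state}\}$, so $|f(v)| \le \|v\|$ for every state $f$. For condition (2), I would argue directly from the definition: if $-w \le v \le w$, then for any $r$ with $re - w \ge 0$ and $re + w \ge 0$, adding the inequality $w \pm v \ge 0$ gives $re \pm v \ge 0$, so every admissible $r$ for $w$ is admissible for $v$, whence $\|v\| \le \|w\|$.

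The substance of the theorem is uniqueness, so suppose $p$ is any seminorm on $V$ satisfying (1)--(3); I must show $p(v) = \|v\|$ for all $v$. The key device is to sandwich $p(v)$ between $\|v\|$ from above and below. For the bound $p(v) \le \|v\|$, I would use condition (2) together with the interval description: for any $r > \|v\|$ we have $-re \le v \le re$, so condition (2) (applied with $w = re$) gives $p(v) \le p(re) = r\,p(e) = r$ by condition (1). Taking the infimum over such $r$ yields $p(v) \le \|v\|$. For the reverse inequality $\|v\| \le p(v)$, I would invoke condition (3) in the form already packaged by Proposition~\ref{seminorm}: since $\|v\| = \sup\{|f(v)| : f \text{ a state}\}$ and condition (3) guarantees $|f(v)| \le p(v)$ for every state $f$, taking the supremum over all states gives $\|v\| \le p(v)$. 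Combining the two bounds gives $p(v) = \|v\|$, completing the uniqueness argument.

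I expect the main subtlety to lie in the uniqueness direction, specifically in making sure both inequalities use only the stated axioms and not incidental facts about $\|\cdot\|$. The cleanest route leans heavily on Proposition~\ref{seminorm}'s identity $\|v\| = \sup\{|f(v)| : f \text{ a state}\}$ and Remark~\ref{interval-rem}'s identification of the state values with the interval $[\alpha,\beta]$; these convert condition (3) directly into the lower bound $\|v\| \le p(v)$. The only point requiring care is the existence of enough admissible $r > \|v\|$ with $-re \le v \le re$ to drive the upper bound, which is exactly the content of the remark following Definition~\ref{order-seminorm-def} (that $\|v\| = \inf\{r : -re \le v \le re\}$), so no separate argument is needed. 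One should note that this uniqueness holds even when $\|\cdot\|$ is merely a seminorm (the non-Archimedean case), since neither inequality uses positive-definiteness; the characterization is therefore genuinely at the level of seminorms.
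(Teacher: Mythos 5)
Your proposal is correct and follows essentially the same route as the paper: both verify conditions (1)--(3) for the order seminorm directly from the definition (your set-inclusion argument for (2) is a minor rephrasing of the paper's sandwich $-re \le -w \le v \le w \le re$), and both prove uniqueness by the identical two-sided sandwich --- the upper bound $p(v) \le \|v\|$ via conditions (1) and (2) applied to $w = re$ for $r > \|v\|$, and the lower bound via condition (3) combined with Proposition~\ref{seminorm}'s identity $\|v\| = \sup\{|f(v)| : f \text{ a state}\}$. No gaps; your closing observation that the argument works at the seminorm level is also consistent with the paper's statement.
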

\begin{proof}  First we show that the order seminorm satisfies these three properties. Let $\| \cdot \|$ be the order seminorm on $V$.  Since $re + e \geq 0$ if and only if $r \geq -1$ and $re - e \geq 0$ if and only if $r \geq 1$, we have that $\| e \| = 1$.  

To see $(2)$ let $r> \|w\|,$ then $-re \le w \le re$ and hence, $-re \le -w \le v \le w \le re$.  Thus $\|v\| \le r$, and hence $\|v\| \leq \inf \{r: r> \|w\| \} = \|w\|.$  Also, $(3)$ is a consequence of Proposition~\ref{seminorm}.

Furthermore, let $||| \cdot |||$ be a seminorm on $V$ satisfying the above three conditions.  If $v \in V$ and $r > \|v\|,$ we have that $-re \leq v \leq +re$.  Using Conditions (1) and (2) for $||| \cdot |||$ we have that $||| v ||| \leq ||| re ||| = r ||| e ||| = r$ and it follows that $|||v||| \le \|v\|.$  Conversely, Condition~(3) shows that $$\sup \{ |f(v)|: f \text{ is a state on } V \} \leq ||| v |||$$ and thus by Proposition~\ref{seminorm} we have that $\| v \| \leq ||| v |||$.  Hence $||| v ||| = \| v \|$.
\end{proof}

\begin{theorem} \label{Arch-equiv-positive-closed-prop}
Let $(V,V^+)$ be an ordered real vector space with order unit $e$, and let $\| \cdot \|$ be the order seminorm on $V$ determined by $e$.  Then the following are equivalent:
\begin{enumerate}
\item $e$ is Archimedean,
\item $V^+$ is a closed subset of $V$ in the order topology induced by $\| \cdot \|,$
\item $-\|v\| e \leq v \leq \|v\| e$ for all $v \in V$.
\end{enumerate}
\end{theorem}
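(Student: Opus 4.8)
The plan is to prove the cyclic chain $(1)\Rightarrow(2)\Rightarrow(3)\Rightarrow(1)$; each arrow then reduces to a single, essentially self-contained argument. Throughout I will use that the topology induced by a seminorm is first countable, so that membership in the closure of $V^+$ may be tested with sequences.

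For $(1)\Rightarrow(2)$, I would take $v$ in the closure of $V^+$ together with a sequence $v_n\in V^+$ satisfying $\|v_n-v\|\to0$. Fixing $r>0$ and choosing $n$ with $\|v_n-v\|<r$, the infimum defining the seminorm produces $v_n-v\le re$, so $v\ge v_n-re\ge -re$ since $v_n\ge0$. Thus $re+v\ge0$ for every $r>0$, and the Archimedean hypothesis forces $v\in V^+$, proving $V^+$ closed. (I note that $(1)\Rightarrow(3)$ is in any case immediate from Lemma~\ref{translated-Archimedean} applied with $r_0=\|v\|$, since $re\pm v\ge0$ for all $r>\|v\|$.)

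For $(2)\Rightarrow(3)$, fix $v$ and set $b:=\|v\|$. For each $r>b$ the infimum definition gives $re+v\ge0$ and $re-v\ge0$, i.e.\ $re\pm v\in V^+$. Letting $r\downarrow b$ and using $\|e\|=1$, we have $\|(re\pm v)-(be\pm v)\|=r-b\to0$, so $re\pm v\to be\pm v$ in the order topology; as $V^+$ is closed this gives $be\pm v\in V^+$, which is exactly $-\,be\le v\le be$, namely $(3)$.

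The crux is $(3)\Rightarrow(1)$, which I expect to be the main obstacle: the naive attempt to pass from ``$re+v\ge0$ for all $r>0$'' to ``$v\ge0$'' by producing a scalar that realizes the seminorm is precisely what breaks in the non-Archimedean setting, because that infimum need not be attained. The remedy is to rescale and recenter. Assuming $(3)$, suppose $re+v\ge0$ for all $r>0$, and write $\alpha:=\sup\{t:te\le v\}$ and $\beta:=\inf\{s:v\le se\}$; the hypothesis forces $\alpha\ge0$, hence $0\le\alpha\le\beta$, and by the identity $\|v\|=\max\{|\alpha|,|\beta|\}$ recorded after Definition~\ref{order-seminorm-def} we get $b:=\|v\|=\beta$. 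A routine computation shows that under a positive affine change $v\mapsto cv+se$ (with $c>0$) one has $\alpha\mapsto c\alpha+s$ and $\beta\mapsto c\beta+s$; applying this to $w:=2v-be$ yields the corresponding values $2\alpha-b\in[-b,b]$ and $2\beta-b=b$, so $\|w\|=\max\{|2\alpha-b|,b\}=b$, even though the infimum for $v$ itself may fail to be attained. Now applying $(3)$ to $w$ supplies exactly the attainment we could not get directly: $w\ge-\|w\|e=-be$, that is $2v-be\ge-be$, whence $2v\ge0$ and $v\in V^+$. This closes the cycle.
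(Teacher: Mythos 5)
Your proof is correct, and it differs from the paper's in both structure and one key step. The paper establishes the two equivalences $(1)\Leftrightarrow(2)$ and $(1)\Leftrightarrow(3)$ separately (four implications), while you run the single cycle $(1)\Rightarrow(2)\Rightarrow(3)\Rightarrow(1)$; in particular your implication $(2)\Rightarrow(3)$ --- letting $r\downarrow\|v\|$ in $re\pm v\in V^+$ and using closedness of $V^+$ to conclude $\|v\|e\pm v\in V^+$ --- has no counterpart in the paper, which instead deduces $(3)$ from $(1)$ via Lemma~\ref{translated-Archimedean}. (Your $(1)\Rightarrow(2)$ is essentially the paper's argument, phrased with sequences rather than limit points, which is legitimate since the seminorm topology is first countable.) For the crux $(3)\Rightarrow(1)$, both you and the paper apply $(3)$ to a recentered element, but the elements and estimates differ: the paper takes $u:=\|v\|e-v$ and needs only the one-sided bound $\|u\|\le\|v\|$, obtained by exhibiting scalars $r$ with $re\pm u\ge 0$, after which $(3)$ gives $u\le\|u\|e\le\|v\|e$, i.e.\ $v\ge 0$; you take $w:=2v-\|v\|e$ and compute $\|w\|=\|v\|$ exactly, using the identity $\|v\|=\max\{|\alpha|,|\beta|\}$ from the remark following Definition~\ref{order-seminorm-def} together with its equivariance under affine changes, after which $(3)$ gives $w\ge-\|v\|e$, i.e.\ $2v\ge 0$. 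Your route buys an exact identification of the seminorm of the recentered element and makes explicit why non-attainment of the defining infimum is harmless; the paper's route is slightly more economical, needing only an inequality chase from the definition of the seminorm and no appeal to the $\alpha,\beta$ formula.
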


\begin{proof} 

\noindent $(1) \implies (2)$.  Let $e$ be Archimedean.  Suppose that $v \in V$  and that $v$ is a limit point of $V^+$.  Then for any $r >0$, there is an element $v_r \in B_r (v) \cap V^+$.  But then $\| v - v_r \| < r$, and $re + v - v_r \geq 0$.  Since $v_r \geq 0$ this implies that $re + v \geq 0$.  Because $e$ is Archimedean, it follows that $v \geq 0$ and $v \in V^+$. Thus $V^+$ is closed.

\noindent $(2) \implies (1)$.  Suppose that $V^+$ is closed in the topology induced by $\| \cdot \|$.  Let $v \in V$ with the property that $re + v \geq 0$ for all $r > 0$.  Since $\| re \| = r \|e \| = r$, we see that for any $r > 0$ we have $re + v \in B_{2r}(v) \cap V^+$.  Thus $v$ is a limit point of $V^+$, and it follows that $v \in V^+$ and $v \geq 0$.

\noindent $(1) \implies (3)$.  Let $v \in V$.  Since $\| v \| := \inf \{ r : re + v \geq 0 \text{ and } re - v \geq 0 \}$, we see that $re + v \geq 0$ and $re - v \geq 0$ for all $r > \| v \|$.  It follows from Lemma~\ref{translated-Archimedean} that $\|v \|e + v \geq 0$ and $\| v \| e - v \geq 0$.  Hence $ -\| v \| e \leq v \leq \|v\| e$.

\noindent $(3) \implies (1)$.  Let $v \in V$ with $re+v \in V^+$ for all $r > 0$.  Consider $\| v \| e - v$.  By hypothesis $\| v \| e - v \geq 0$, and hence $re + ( \| v \| e - v) \geq 0$ for all $r >0$.  In addition, since $re+v \geq 0$ for all $r >0$, it follows that $(r - \| v \|)e + v \geq 0$ for all $r > \| v \|$, and $re - (\| v \|e-v) \geq 0$ for all $r >0$.  The inequalities in the previous two sentences imply that $\| \, \| v \| e - v \, \| \leq \| v \|$.  By (3) we have that $\| v \| e - v \leq \| \, \| v \| e - v \, \| e$, and hence $\| v \| e - v \leq \| v \| e$.  Thus $v \geq 0$, and $v \in V^+$ so that $e$ is Archimedean.
\end{proof}

\begin{remark}
We now turn our attention to morphisms between ordered vector spaces, and discuss categories of ordered vector spaces and Archimedean ordered vector spaces.  If $(V,V^+)$ is an ordered vector space with order unit $e$ and $(W,W^+)$ is an ordered vector space with order unit $e'$, then a linear map $\phi : V \to W$ is called \emph{positive} if $v \in V^+$ implies $\phi(v) \in W^+$.  In addition, $\phi$ is called \emph{unital} if $\phi(e) = e'$.  If $\phi : V \to W$ is a unital positive linear map that is a  bijection, it is not necessarily the case that $\phi^{-1} : W \to V$ is positive.  We call a linear map $\phi : V \to W$ an \emph{order isomorphism} when $v \in V^+$ if and only if $\phi(v) \in W^+$.  Note that if $\phi : V \to W$ is an order isomorphism, then $\phi^{-1}$ is a positive linear map.

We wish to consider the category $\mathcal{O}$ whose objects are ordered vector spaces with an order unit and whose morphisms are unital positive linear maps.  The isomorphisms in this category are unital order isomorphisms.  Furthermore, the class of ordered vector spaces with Archimedean order units forms a full subcategory $\mathcal{O}_\textrm{Arch}$ of $\mathcal{O}$.  

We would like to consider a functorial method for taking an ordered vector space and turning it into an Archimedean ordered vector space.  We will call this process \emph{Archimedianization}.  Furthermore, we would like to consider quotients of ordered vector spaces by order ideals.  This is straightforward for ordered vector spaces with an order unit: given an ordered space $V$ with order unit $e$ and given an order ideal $J$, the quotient vector space $V/J$ is naturally an ordered vector space with order unit $e+J$.  This gives a notion of quotients in the category $\mathcal{O}$.  However, this does not work for $\mathcal{O}_\textrm{Arch}$ because the quotient of an Archimedean ordered vector space by an order ideal is not necessarily Archimedean.  In order to have a notion of quotient in the category $\mathcal{O}_\textrm{Arch}$, we will make use of the Archimedeanization process and define the \emph{Archimedean quotient} of an Archimedean ordered vector space $V$ by an order ideal $J$ to be the Archimedeanization of the quotient $V/J$.
\end{remark}

\subsection{The Archimedeanization of an ordered real vector space} \label{real-Arch-subsec}

Suppose that $(V,V^+)$ is an ordered real vector space with an order unit $e$ that is not Archimedean.  There are two obstructions to having $e$ be Archimedean: (1) the order seminorm $\| \cdot \|$ may not be a norm (see Corollary~\ref{Arch-gives-norm-cor}), and (2) there may be non-positive infinitesimals in $V$.  We shall show that by quotienting out by the vectors of norm $0$ and then enlarging the positive elements to prevent non-positive infinitesimals, we may create an ordered vector space in which the equivalence class of $e$ is an Archimedean order unit.  In this way we may ``Archimedeanize'' $V$.

\begin{definition}
Let  $(V,V^+)$ be an ordered real vector space with an order unit $e$.  Define $D := \{ v \in V : re + v \in V^+ \text{ for all $r >0$} \}$.  Also define $N := D \cap - D$.
\end{definition}

\begin{remark} \label{D-N-real-remark}
It is straightforward to verify that $D$ is a cone with $V^+ \subseteq D$ and that $N$ is a real subspace of $V$.
\end{remark}

\begin{proposition}
Let $(V,V^+)$ be an ordered real vector space with an order unit $e$. 
Then $D$ is equal to the closure of $V^+$ in the order topology and
$N = \{ v \in V : \| v \| = 0 \} = \bigcap_{f: V \to \R \atop \text{is a state}} \ker f .$
\end{proposition}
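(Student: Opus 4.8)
The plan is to prove the three asserted identities more or less independently, leaning on the order seminorm machinery already in place. I would organize the argument around the equality $D = \overline{V^+}$ first and then read off the description of $N$.

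For $D = \overline{V^+}$ I would prove two inclusions. For $D \subseteq \overline{V^+}$: given $v \in D$, for each $\epsilon > 0$ the element $\tfrac{\epsilon}{2}e + v$ lies in $V^+$ by the definition of $D$, and since $\|\tfrac{\epsilon}{2}e\| = \tfrac{\epsilon}{2} < \epsilon$ it lies in $B_\epsilon(v)$; hence every basic neighborhood of $v$ meets $V^+$, so $v \in \overline{V^+}$. For the reverse inclusion $\overline{V^+} \subseteq D$: given $v \in \overline{V^+}$ and $r > 0$, pick $w \in V^+$ with $\|v - w\| < r$. By Definition~\ref{order-seminorm-def} there is $r' < r$ with $r'e + (v - w) \geq 0$, and then Lemma~\ref{basic-order-unit-facts}(b) upgrades this to $re + (v - w) \geq 0$. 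Adding $w \in V^+$ gives $re + v \in V^+$, and since $r > 0$ was arbitrary we conclude $v \in D$.

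For the description of $N$ I would unfold $N = D \cap -D$: a vector $v$ lies in $N$ exactly when $re + v \geq 0$ and $re - v \geq 0$ for all $r > 0$. Comparing with the defining formula for $\|\cdot\|$, this gives $\|v\| = 0$ immediately; conversely, if $\|v\| = 0$ then for each $\epsilon > 0$ the infimum yields some $r < \epsilon$ with $re \pm v \geq 0$, and Lemma~\ref{basic-order-unit-facts}(b) promotes these to $\epsilon e \pm v \geq 0$, whence $v \in D \cap -D = N$. This establishes $N = \{v : \|v\| = 0\}$. The final identity $N = \bigcap_f \ker f$ then follows at once from the representation $\|v\| = \sup\{|f(v)| : f \text{ a state}\}$ in Proposition~\ref{seminorm}: a supremum of nonnegative numbers vanishes precisely when each $|f(v)|$ does, i.e. when $v$ is annihilated by every state.

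The only point requiring genuine care is the inclusion $\overline{V^+} \subseteq D$: because the order topology need not be Hausdorff, one should argue directly from basic neighborhoods rather than from limits, and the passage from an approximating $w$ to the conclusion $re + v \in V^+$ hinges on combining the seminorm estimate with the monotonicity in Lemma~\ref{basic-order-unit-facts}(b) and the additivity of the cone. Everything else is a direct translation between the definitions of $D$, $N$, and the order seminorm.
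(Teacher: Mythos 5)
Your proof is correct and follows essentially the same route as the paper: for $D=\overline{V^+}$ the paper also approximates $v$ by a positive element (via a sequence $v_n$ rather than a single $w$ per neighborhood) and adds it back after promoting the seminorm estimate to $re+(v-v_n)\geq 0$, and for $N$ it likewise unfolds $D\cap -D$ against the defining infimum and quotes Proposition~\ref{seminorm} for the intersection of state kernels. The only cosmetic difference is your neighborhood-based phrasing (your caution about non-Hausdorffness is unnecessary, since the pseudo-metric topology is first countable and sequences suffice, but it does no harm).
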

\begin{proof}

If $v \in D,$ then $re+v \in V^+$ with $\|(re+v) -v\| = r$ for all $r > 0$ and so $D$ is contained in the closure of $V^+$.  Conversely, assume that $v$ is in the closure of $V^+$ so that there exists  sequence $\{ v_n \}_{n=1}^\infty \subseteq V^+$ with $\|v_n -v \| \to 0$. For each $n \in \N$ choose $r_n > \|v_n -v\|$ with $r_n \to 0$.  Then $r_ne \pm (v_n -v) \in V^+,$  and hence $r_ne +v - v_n \in V^+$, which implies that $r_ne +v \in V^+$ for all $n$. Since $r_n \to 0$ it follows that $re +v \in V^+$ for all $r > 0.$  Thus the closure of $V^+$ is contained in $D$, and so the two sets are equal.

The equality $\{ v \in V : \| v \| = 0 \} = \bigcap_{f: V \to \R \atop \text{is a state}} \ker f$ follows from Proposition~\ref{seminorm}, so we prove the first equality.  If $v \in N$, then $re \pm v \in V^+$ for all $r > 0.$ Hence, $-re \leq v \leq re$ for all $r > 0.$ By Theorem~\ref{norm-char} we have $\|v\| \leq \|re\|=r$ for all $r >0,$ and so $\|v\| =0$.  Conversely, if $\|v\|=0,$ then $re \pm v \in V^+$ for all $r > 0.$ This implies that $\pm v \in D,$ or equivalently that $v \in D \cap -D = N.$ 

\end{proof}

\begin{theorem} \label{V/N-is-Arch-prop}
Let  $(V,V^+)$ be an ordered real vector space with an order unit $e$.  Let $N := D \cap -D$, and consider $V / N$ with $$(V / N )^+ := D+N = \{ v + N : v \in D \}.$$  Then $(V / N, (V / N)^+)$ is an ordered vector space and $e+N$ is an Archimedean order unit for this space.
\end{theorem}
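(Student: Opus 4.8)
The plan is to establish the ordered-vector-space axioms for $(V/N,(V/N)^+)$ directly, and then to read off the order-unit and Archimedean properties by unwinding the definition of $D$. The foundational step, on which everything else rests, is the reformulation that a coset lies in $(V/N)^+$ exactly when one (equivalently, every) representative lies in $D$: since $N = D \cap -D \subseteq D$ and $D$ is closed under addition by Remark~\ref{D-N-real-remark}, we have $D + N = D$, so $v + N \in (V/N)^+$ if and only if $v \in D$. In particular $(V/N)^+$ is well-defined as the image of $D$ under the quotient map.

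With this reformulation in hand, the cone axioms are inherited from $D$: if $v,w \in D$ and $a \ge 0$, then $av$ and $v+w$ lie in $D$, so $a(v+N)$ and $(v+N)+(w+N)$ lie in $(V/N)^+$. The antisymmetry axiom (c) is where the passage to the quotient by $N$ is essential: if $v + N \in (V/N)^+ \cap -(V/N)^+$, the reformulation forces $v \in D$ and $-v \in D$, hence $v \in D \cap -D = N$, so $v + N$ is the zero coset. That $e + N$ is an order unit is then immediate: $e \in V^+ \subseteq D$ gives $e + N \in (V/N)^+$, and given any $v$ the order unit $e$ of $V$ supplies $s > 0$ with $se - v \in V^+ \subseteq D$, so $s(e+N) \ge v + N$.

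The heart of the statement is the Archimedean property, which I would prove directly from the definition of $D$ rather than via the closedness criterion of Theorem~\ref{Arch-equiv-positive-closed-prop}. Suppose $v + N$ satisfies $r(e+N) + (v+N) \ge 0$ for all $r > 0$; by the reformulation this says $re + v \in D$ for every $r > 0$, which upon unwinding the definition of $D$ means $(s+r)e + v \in V^+$ for all $r,s > 0$. Given any $t > 0$, taking $r = s = t/2$ yields $te + v \in V^+$, so $v \in D$ and therefore $v + N \ge 0$. The one point requiring recognition is that the condition defining $D$ is stable under a further translation by $re$ — that is, $D$ is ``idempotent'' for the Archimedean operation — and once the two-parameter observation $(s+r)e + v \in V^+$ is extracted, the conclusion follows with no appeal to the order seminorm.
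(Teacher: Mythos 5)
Your proof is correct and follows essentially the same route as the paper's: the cone and antisymmetry axioms are deduced from $D$ being a cone with $N = D \cap -D$, the order unit property from $V^+ \subseteq D$, and the Archimedean property by the same halving trick (the paper writes $(r_0/2)e + [(r_0/2)e + v] \in V^+$, which is your $r = s = t/2$ observation). Your explicit reformulation $D + N = D$, so that coset membership in $(V/N)^+$ can be tested on any representative, is a clean packaging of a fact the paper uses implicitly, but it does not change the substance of the argument.
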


\begin{proof}
Since $D$ is a cone, it follows that $(V / N )^+ := D+N$ is a cone.  In addition, if $x + N \in (V / N )^+ \cap -(V / N )^+$, then $x+N = d_1 +N$ and $x+N = -d_2 +N$ for some $d_1, d_2 \in D$.  Hence $x-d_1 \in N \subseteq D$ and since $D$ is a cone, we have that $x \in D$.  Likewise $x+d_2 \in N \subseteq -D$, and since $D$ is a cone we have that $x \in -D$.  Hence $x \in N:= D \cap -D$, and $x + N = 0 + N$.  Thus $(V/N)^+ \cap -(V/N)^+ = \{ 0 \}$.

In addition, given any $v + N \in V/N$, since $e$ is an order unit for $(V,V^+)$ there exists $r > 0$ such that $re + v \in V^+$.  Because $V^+ \subseteq D$, it follows that $r(e+N) + (v+N) = (re+v) +N \in D+N = (V / N )^+$.  Hence $e+N$ is an order unit for $V/N$.

Finally, suppose that $v + N \in V/N$ and that $r (e+N) + (v+N) \in (V / N )^+$ for all $r > 0$.  Then $(re+v) + N \in D+N$ and $re+v \in D$ for all $r > 0$.  Choose any $r_0 > 0$.  Then $(r_0/2) e +v \in D$, and by the definition of $D$, we have that $(r_0/2)e + [(r_0/2)e +v] \in V^+$.  It follows that $r_0e + v \in V^+$ for all $r_0 >0$.  By the definition of $D$, we have that $v \in D$.  Thus $v+N \in (V/N)^+$, and $e+N$ is an Archimedean order unit.
\end{proof}

\begin{corollary}
Let $(V,V^+)$ be an ordered vector space with an order unit $e$. If the order seminorm is a norm on $V$, then $(V,D)$ is an ordered vector space with $e$ an Archimedean order unit.
\end{corollary}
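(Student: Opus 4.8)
The plan is to deduce this corollary almost immediately from Theorem~\ref{V/N-is-Arch-prop}, by showing that the hypothesis that the order seminorm is a norm forces $N = \{0\}$, so that the Archimedeanization $V/N$ collapses to $V$ itself. First I would recall that we established above the equality $N = \{v \in V : \|v\| = 0\}$. Since we are now assuming that $\|\cdot\|$ is in fact a norm, the only vector of norm zero is $0$, and therefore $N = \{0\}$.

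Next I would observe that when $N = \{0\}$ the canonical quotient map $q : V \to V/N$ given by $q(v) = v + N$ is a linear isomorphism. Under $q$ the order unit $e$ is carried to $e + N$, and the set $D$ (which is a cone containing $V^+$ by Remark~\ref{D-N-real-remark}) is carried onto $\{v + N : v \in D\} = D + N = (V/N)^+$. Thus $q$ is an isomorphism of the pair $(V, D)$ onto the pair $(V/N, (V/N)^+)$ that sends $e$ to $e + N$.

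Finally I would apply Theorem~\ref{V/N-is-Arch-prop}, which guarantees that $(V/N, (V/N)^+)$ is an ordered vector space and that $e + N$ is an Archimedean order unit for it. Transporting these conclusions back through the isomorphism $q$ yields that $(V, D)$ is an ordered vector space with Archimedean order unit $e$. There is no substantial obstacle in this argument; its entire content is the identification $N = \{0\}$ together with the resulting remark that Archimedeanization does nothing to a space whose order seminorm is already a norm. If one preferred to avoid the quotient language, the same conclusion could be checked directly: $D$ is a cone with $D \cap -D = N = \{0\}$, the inclusion $V^+ \subseteq D$ shows that $e$ remains an order unit for $(V,D)$, and the Archimedean property follows by repeating the final paragraph of the proof of Theorem~\ref{V/N-is-Arch-prop} (if $re + v \in D$ for all $r > 0$, then for any fixed $r_0 > 0$ we have $(r_0/2)e + v \in D$, whence $r_0 e + v \in V^+$, and letting $r_0$ vary gives $v \in D$).
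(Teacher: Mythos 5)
Your proposal is correct and is essentially the paper's own argument: the paper's entire proof is the observation that $N=\{0\}$ when the order seminorm is a norm, after which Theorem~\ref{V/N-is-Arch-prop} applies with the quotient $V/N$ identified with $V$ itself. Your extra details (the explicit isomorphism $q$ and the direct verification avoiding quotients) are fine but just spell out what the paper leaves implicit.
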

\begin{proof} In this case we have that $N=  \{ 0 \}.$
\end{proof}

\begin{definition}
Let  $(V,V^+)$ be an ordered real vector space with an order unit $e$.  Let $D := \{ v \in V : re + v \in V^+ \text{ for all $r >0$} \}$ and $N := D \cap -D$.  We define $V_\textnormal{Arch}$ to be the ordered vector space $(V/N, (V/N)^+)$ with the Archimedean order unit $e+N$.  We call $V_\textnormal{Arch}$ the \emph{Archimedeanization} of $V$.
\end{definition}

The following result describes a universal property that characterizes the Archimedeanization.

\begin{theorem} \label{real-Arch-charact-thm}
Let  $(V,V^+)$ be an ordered real vector space with an order unit $e$, and let $V_\textnormal{Arch}$ be the Archimedeanization of $V$.  Then there exists a unital surjective positive linear map $q : V \to V_\textnormal{Arch}$ with the property that whenever $(W,W^+)$ is an ordered vector space with Archimedean order unit $e'$, and $\phi : V \to W$ is a unital positive linear map, then there exists a unique positive linear map $\widetilde{\phi} : V_\textnormal{Arch} \to W$ with $\phi = \widetilde{\phi} \circ q$.
\begin{equation*}
\xymatrix{  V_\textnormal{Arch} \ar@{-->}[rd]^{\widetilde{\phi}} & \\ V \ar[u]^q  \ar[r]^<>(.35)\phi & W}
\end{equation*}
In addition, this property characterizes $V_\textnormal{Arch}$:  If $V'$ is any ordered vector space with an Archimedean order unit and $q' : V \to V'$ is a unital surjective positive linear map with the above property, then $V'$ is isomorphic to $V_\textnormal{Arch}$ via a unital order isomorphism $\psi : V_\textnormal{Arch} \to V'$ with $\psi \circ q = q'$.
\end{theorem}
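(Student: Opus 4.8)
The statement has two halves: existence of the quotient map $q$ with the universal (factoring) property, and uniqueness of $V_\textnormal{Arch}$ up to unital order isomorphism. I would prove existence first and then deduce uniqueness from it by a standard categorical argument.

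For existence, I would take $q : V \to V_\textnormal{Arch} = V/N$ to be the canonical quotient map $q(v) = v + N$. This is automatically linear and surjective; it is unital since $q(e) = e + N$, the chosen Archimedean order unit; and it is positive because $V^+ \subseteq D$ (Remark~\ref{D-N-real-remark}), so $q(V^+) \subseteq D + N = (V/N)^+$. Now suppose $\phi : V \to W$ is unital and positive with $W$ having Archimedean order unit $e'$. The natural candidate is $\widetilde{\phi}(v + N) := \phi(v)$. The crux is showing this is \emph{well-defined and positive}, and here is where the Archimedean hypothesis on $W$ is essential. For well-definedness I must check $\phi(N) = \{0\}$: if $v \in N$, then $\|v\| = 0$ in the order seminorm on $V$, so by Proposition~\ref{positive-R-functionals-cts} (applied to states on $W$ composed with $\phi$, or more directly to the inequality $-re \le v \le re$ for all $r>0$) we get $re' \pm \phi(v) = \phi(re \pm v) \in W^+$ for all $r>0$; since $e'$ is Archimedean this forces $\phi(v) \in W^+$ and $-\phi(v) \in W^+$, whence $\phi(v) = 0$. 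Once $\widetilde{\phi}$ is well-defined, linearity and unitality are immediate, and uniqueness of $\widetilde{\phi}$ follows because $q$ is surjective.

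The step I expect to be the main obstacle is verifying that $\widetilde{\phi}$ is \emph{positive}, i.e.\ that $\widetilde{\phi}((V/N)^+) \subseteq W^+$. An element of $(V/N)^+$ has the form $d + N$ with $d \in D$, meaning $re + d \in V^+$ for all $r > 0$. Applying the positive map $\phi$ gives $re' + \phi(d) = \phi(re + d) \in W^+$ for all $r > 0$. Now I invoke the Archimedean property of $e'$ in $W$ to conclude $\phi(d) \in W^+$, so $\widetilde{\phi}(d + N) = \phi(d) \in W^+$. This is the single place where I genuinely need $W$ to be Archimedean, and it is the heart of why the universal property lands in the subcategory $\mathcal{O}_\textrm{Arch}$.

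For the uniqueness half, I would run the usual formal argument. Suppose $q' : V \to V'$ also satisfies the property with $V'$ Archimedean. Applying the universal property of $V_\textnormal{Arch}$ to the map $q'$ yields a positive linear $\psi : V_\textnormal{Arch} \to V'$ with $\psi \circ q = q'$; applying the universal property of $V'$ to $q$ yields $\psi' : V' \to V_\textnormal{Arch}$ with $\psi' \circ q' = q$. Then $\psi' \circ \psi$ and the identity on $V_\textnormal{Arch}$ both factor $q$ through $q$, so by the uniqueness clause in the universal property they coincide; symmetrically $\psi \circ \psi' = \mathrm{id}_{V'}$. Hence $\psi$ is a bijection whose inverse $\psi'$ is also positive, so $\psi$ is an order isomorphism, and it is unital because $\psi(e + N) = \psi(q(e)) = q'(e) = e'$. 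This is routine diagram-chasing and should require no new ideas beyond the existence argument.
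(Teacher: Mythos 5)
Your proposal is correct and follows essentially the same route as the paper: define $q$ as the quotient map onto $V/N$, use the Archimedean property of $e'$ first to show $N \subseteq \ker\phi$ (well-definedness) and then again to show $\widetilde{\phi}(D+N) \subseteq W^+$ (positivity), with uniqueness of $\widetilde{\phi}$ from surjectivity of $q$. The only difference is cosmetic: you spell out the final diagram chase that the paper dismisses as standard, and your aside invoking Proposition~\ref{positive-R-functionals-cts} for well-definedness is an unnecessary detour, since the direct argument you give in the same sentence (from $-re \le v \le re$ for all $r>0$) is exactly what is needed.
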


\begin{proof}
Recall that $V_\textnormal{Arch} = V/N$ with positive cone $D+N$ and Archimedean order unit $e+N$.  Define $q : V \to V/N$ to be the quotient map.  Then $q$ is linear and unital, and since $V^+ \subseteq D$ it follows that $q$ is positive.  If $(W,W^+)$ is an ordered vector space with Archimedean order unit $e'$, and if $\phi : V \to W$ is a unital positive linear map, then for any $v \in N$ we have that $re +v \in V^+$ and $re-v \in V^+$ for all $r >0$.  Applying $\phi$ we obtain that $re'+\phi(v) \in W^+$ and $re' -\phi(v) \in W^+$ for all $r >0$.  Since $e'$ is an Archimedean order unit for $W$, it follows that $\phi(v) = 0$.  Thus $N \subseteq \ker \phi$.  Hence the map $\widetilde{\phi} : V/N \to W$ defined by $\widetilde{\phi} (v+N) = \phi(v)$ is well defined and makes the above diagram commute.  Furthermore, if $v+N \in (V/N)^+ = D + N$, then we may assume $v \in D$ and $re+v \in V^+$ for all $r >0$.  Applying $\phi$ gives that $re'+\phi(v) \in W^+$ for all $r>0$ and since $e'$ is Archimedean, it follows that $\phi(v) \in W^+$.  Thus $\widetilde{\phi}(v+N) = \phi(v) \in W^+$ and $\widetilde{\phi}$ is a positive map.  Finally, to see that $\widetilde{\phi}$ is unique, simply note that any $\psi : V_\textnormal{Arch} \to W$ that makes the above diagram commute would have $\psi (v + N) = \psi ( q (v)) = \phi (v) = \widetilde{\phi} (q (v)) = \widetilde{\phi} (v+N)$ so that $\psi = \widetilde{\phi}$.

The fact that $V_\textnormal{Arch}$ is characterized up to unital order isomorphism by the universal property follows from a standard diagram chase.
\end{proof}

\begin{remark}
The Archimedeanization may be viewed as a functor from the category
$\mathcal{O}$ of ordered vector spaces with an order unit to the
subcategory $\mathcal{O}_\textrm{Arch}$ of Archimedean ordered vector
spaces.  This functor takes an object $V$ to $V_\textrm{Arch}$, and if
$\phi : V \to W$ is a unital positive linear map, then one can see
that $\phi (D_V) \subseteq D_W$ and $\phi (N_V) \subseteq N_W$ so that
the induced map $\widetilde{\phi} : V_\textrm{Arch} \to
W_\textrm{Arch}$ is well defined and positive. Note that this functor
fixes the subcategory $\mathcal{O}_\textrm{Arch}$; i.e., it is a
projection onto the subcategory. 
\end{remark}

\subsection{Quotients of ordered real vector spaces} \label{Real-quotients-subsec}

We are now prepared to discuss quotients of ordered vector spaces.  Note that if $\phi$ is a positive map and $0 \le q \le p$ with $\phi(p) =0$, then necessarily $\phi(q)=0.$ This observation motivates the following definition.

\begin{definition} If $(V,V^+)$ is an ordered vector space, then a subspace $J \subseteq V$ is called an \emph{order ideal} provided that $p \in J$ and $0 \leq q \leq p$ implies that $q \in J$.
\end{definition}

\begin{proposition} \label{order-quotients-prop}
Let $(V,V^+)$ be an ordered vector space with order unit $e$, and let $J \subseteq V$ be an order ideal. Then $(V/J, V^++J)$ is an ordered vector space with order unit $e+J.$
\end{proposition}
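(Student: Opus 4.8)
The plan is to verify, in turn, the three cone/ordering conditions of Definition~\ref{real-ord-v-s-def} for the pair $(V/J, V^++J)$ and then the order unit condition of Definition~\ref{ord-unit-real-def}. Throughout, I regard $V^+ + J$ as the image $\{v + J : v \in V^+\}$ of $V^+$ under the quotient map $\pi : V \to V/J$; adding elements of $J$ does not change a coset, so this is the same subset of $V/J$ however one parses the notation.

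First I would dispatch cone axioms (a) and (b), which are immediate and use only that $V^+$ is a cone: if $v \in V^+$ and $a \geq 0$, then $a(v + J) = (av) + J$ with $av \in V^+$; and if $v, w \in V^+$, then $(v+J) + (w+J) = (v+w) + J$ with $v + w \in V^+$.

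The heart of the argument is condition (c), which is where the order ideal hypothesis does its work. I would take a coset lying in $(V^+ + J) \cap -(V^+ + J)$ and write it as $p + J = -q + J$ with $p, q \in V^+$, so that $p + q \in J$. Since $q \geq 0$ we have $0 \leq p \leq p + q$, and because $J$ is an order ideal containing $p + q$, its defining property forces $p \in J$. Hence the coset $p + J$ is the zero coset, which gives $(V^+ + J) \cap -(V^+ + J) = \{0 + J\}$. This is the step I expect to be the main obstacle, in the sense that it is the only point at which the order ideal structure of $J$ — rather than its being merely a subspace — is genuinely needed; the subspace property alone would not rule out nonzero elements in the intersection.

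Finally, the order unit property transports directly from $V$: given any $v + J \in V/J$, since $e$ is an order unit for $V$ there is some $r > 0$ with $re - v \in V^+$, whence $r(e+J) - (v + J) = (re - v) + J \in V^+ + J$, i.e.\ $r(e+J) \geq v + J$. Thus $e + J$ is an order unit for $(V/J, V^+ + J)$, completing the proof.
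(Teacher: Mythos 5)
Your proof is correct and follows essentially the same route as the paper: the key step in both is to write an element of $(V^++J)\cap-(V^++J)$ as $p+J=-q+J$ with $p,q\in V^+$, observe that $p+q\in J$ with $0\leq p\leq p+q$, and invoke the order ideal property to conclude $p\in J$. The paper simply dismisses the cone axioms and the order unit verification as routine, which you spell out, but there is no substantive difference.
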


\begin{proof}  It is easily seen that $V^++J$ is a cone, and that $e+J$ is an order unit. It remains to see that $(V^++J)\cap (-V^++J) = \{ 0 +J \}$. To this end, assume that there exist $p,q \in V^+$ such that $p+J = -q+J$. Then there exists $j \in J$ such that $p= -q +j.$  This implies that $j = p+q \in V^+,$ and hence $0 \leq p \leq j$ so that $p \in J$.  Thus $p+J = 0 +J$, and the proof is complete.
\end{proof}

\begin{definition} Let $(V,V^+)$ be an ordered vector space with order unit $e$, and let $J \subseteq V$ be an order ideal. We call the ordered vector space $(V/J, V^++J)$ with order unit $e+J$, the \emph{quotient of $V$ by $J$}.   If, in addition, $e$ is an Archimedean order unit, then we call the Archimedeanization of $(V/N, V^++J)$ the \emph{Archimedean quotient of $V$ by $J$}.
\end{definition}

\begin{remark}
There are examples (see, for instance, the example described in the remark on the bottom of \cite[p.67]{Alf}) showing that the quotient $V/J$ of an Archimedean ordered vector space $V$ by a closed order ideal $J$ need not be Archimedean.  This is why we need to consider the Archimedean quotient.
\end{remark}

By definition, the Archimedean quotient of $V$ by an order ideal $J$ is a quotient of $V/J$, and therefore a quotient of a quotient of $V$.  The following proposition shows how we may view this as a single quotient of $V$.

\begin{proposition} \label{Arch-one-quotient-prop}
Let $(V,V^+)$ be an ordered vector space with order unit $e$, and let $J$ be an order ideal of $V$.  If we define \begin{align*}
N_J := \{ v \in V : \text{for all } &r > 0 \text{ there exists } j,k \in J  \text{ such that } \\
&  j+re+v \in V^+ \text{ and } k + re - v \in V^+ \},
\end{align*}
then $N_J$ is an order ideal of $V$.  Furthermore, if we let 
\begin{align*}
(V/N_J)^+ := \{ v + N_J : \text{for all } &r > 0 \text{ there exists } j \in J \text{ such that } \\
& j + re + v \in V^+  \}
\end{align*}
then $(V/N_J, (V/N_J)^+ )$ is an ordered vector space with Archimedean order unit $e + N_J$, and this space is unitally order isomorphic to the Archimedean quotient of $V$ by $J$.
\end{proposition}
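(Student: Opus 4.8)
The plan is to realize the Archimedean quotient --- which by definition is the Archimedeanization of the ordered space $V/J$ --- as a single quotient of $V$ via the third isomorphism theorem, and to identify $N_J$ as the pullback to $V$ of the subspace that one divides out when Archimedeanizing $V/J$. The whole argument reduces to two bookkeeping identifications; once they are in place, every structural conclusion is formal.

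First I would fix notation. Let $\pi : V \to V/J$ be the quotient map; by Proposition~\ref{order-quotients-prop}, $(V/J, V^+ + J)$ is an ordered vector space with order unit $e+J$. Writing $W := V/J$, its Archimedeanization (i.e.\ the Archimedean quotient of $V$ by $J$) is, by Theorem~\ref{V/N-is-Arch-prop}, the space $W/N_W$ with cone $\{ x + N_W : x \in D_W \}$, where $D_W := \{ x \in W : r(e+J) + x \in V^+ + J \text{ for all } r>0 \}$ and $N_W := D_W \cap -D_W$. Now I would unwind the cone $V^+ + J$: the relation $r(e+J) + (v+J) \in V^+ + J$ holds precisely when $re + v + j \in V^+$ for some $j \in J$. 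Hence $v + J \in D_W$ if and only if for every $r>0$ there is $j \in J$ with $re+v+j \in V^+$, and applying the same to $-v$ shows that $v+J \in N_W$ if and only if both conditions defining $N_J$ hold. This gives the key identity $N_J = \pi^{-1}(N_W)$.

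Next I would prove the first assertion, that $N_J$ is an order ideal. It is a subspace, being the preimage under the linear map $\pi$ of the subspace $N_W$ (a subspace by Remark~\ref{D-N-real-remark}). For the ideal property, suppose $p \in N_J$ and $0 \le w \le p$. The membership $w + N_J$ requires, for each $r>0$, some $j \in J$ with $j + re + w \in V^+$ and some $k \in J$ with $k + re - w \in V^+$. The first holds with $j = 0$, since $w \in V^+$ and $e \in V^+$ give $re + w \in V^+$. For the second, since $p \in N_J$ choose $k \in J$ with $k + re - p \in V^+$; then adding $p - w \in V^+$ yields $k + re - w \in V^+$. Thus $w \in N_J$, so $N_J$ is an order ideal.

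Finally, because $N_J = \pi^{-1}(N_W)$ and $\pi$ is surjective, the composite $V \to V/J \to W/N_W$ has kernel exactly $N_J$, and the third isomorphism theorem produces a unital linear isomorphism $\Psi : V/N_J \to W/N_W$ with $\Psi(v + N_J) = (v+J) + N_W$. Since $N_W \subseteq D_W$ and $D_W$ is a cone, $D_W + N_W = D_W$, so $\Psi(v+N_J)$ lies in the Archimedeanization cone if and only if $v + J \in D_W$, which by the second paragraph means for all $r > 0$ there is $j \in J$ with $re + v + j \in V^+$ --- exactly the condition defining $(V/N_J)^+$ in the statement. Hence $\Psi$ carries $(V/N_J)^+$ bijectively onto the cone of the Archimedean quotient and is a unital order isomorphism; in particular $(V/N_J,(V/N_J)^+)$ inherits from Theorem~\ref{V/N-is-Arch-prop} the structure of an ordered vector space with Archimedean order unit $e+N_J$. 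I expect the only real work to be the careful unwinding in the second paragraph: one must translate membership in the ``quotient of a quotient'' cone into the elementary representative-level condition involving $J$, and confirm that the resulting set is exactly the explicitly given $(V/N_J)^+$ (and that $N_J$ is exactly $\pi^{-1}(N_W)$), rather than merely comparable to it.
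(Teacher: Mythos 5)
Your proof is correct. The computational heart of your argument --- unwinding membership in the cone of the ``quotient of a quotient'' $(V/J)/N$ into the representative-level condition ``for all $r>0$ there exists $j \in J$ with $j+re+v \in V^+$'' --- is exactly the chain of equivalences the paper runs through at the end of its proof, and your order-ideal argument for $N_J$ is essentially the paper's (the paper reuses the single element $k$ for both inequalities, writing $k+re+q = (k+re-q)+2q$, where you take $j=0$; both work). Where you genuinely diverge is in how the remaining work is organized. The paper verifies from scratch that $(V/N_J,(V/N_J)^+)$ is an ordered vector space with Archimedean order unit $e+N_J$ --- in particular it proves the Archimedean property directly via the splitting $re+v = (r/2)e + \bigl((r/2)e+v\bigr)$ --- and only afterwards defines $\psi : (V/J)/N \to V/N_J$, checking well-definedness, injectivity, surjectivity, and the order isomorphism property by hand. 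You instead establish the identification first: the equality $N_J = \pi^{-1}(N_W)$ makes $V/N_J \cong (V/J)/N_W$ an instance of the third isomorphism theorem (so well-definedness and bijectivity come for free), the observation $D_W+N_W=D_W$ matches the cones, and then every structural assertion about $(V/N_J,(V/N_J)^+)$ --- cone, $(V/N_J)^+ \cap -(V/N_J)^+=\{0\}$, order unit, Archimedean --- is transported across the unital order isomorphism from Theorem~\ref{V/N-is-Arch-prop} applied to $V/J$. This buys real economy: you never repeat the $r/2$ argument or the axiom checks, since Theorem~\ref{V/N-is-Arch-prop} already carries that load; the cost is that your proof is correct only because the transport-of-structure step is genuinely formal, which you rightly flag and which does hold (a unital linear bijection mapping one cone exactly onto the other pulls back all four properties). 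The paper's direct route is longer but leaves the reader with a self-contained, representative-level verification of each axiom for $V/N_J$, independent of the comparison map.
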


\begin{proof}
It is straightforward to verify that $N_J$ is a subspace of $V$.  To see that $N_J$ is an order ideal suppose $0 \leq q \leq p$ with $p \in N_J$.  Let $r > 0$.  Since $p \in N_J$, there exists $j,k \in J$ such that $j + re + p \in V^+$ and $k+re-p \in V^+$.  Since $p,q \in V^+$ and $p-q \in V^+$, this implies that $k + re - q = (k + re - p) + (p-q) \in V^+$.  In addition, $k + re + q =  (k+re-q) + 2q \in V^+$.  Thus $q \in N_J$.

The fact that $(V/N_J)^+$ is a cone is straightforward.  In addition, we see that if $v + N_J \in (V/N_J)^+ \cap -(V/N_J)^+$, then it follows that $v \in N_J$.  Hence $(V/N_J)^+ \cap -(V/N_J)^+ = \{ 0 + N_J \}$.  Thus $(V/N_J, (V/N_J)^+ )$ is an ordered vector space.

To see that $e+N_J$ is an order unit, note that $V^+ + N_J \subseteq (V/N_J)^+$.  Given $v + N_J \in V / N_J$, choose $r > 0$ such that $re - v \in V^+$.  Then $$r(e+N_J) - (v+N_J) = (re-v)+N_J  \in V^+ + N_J \subseteq (V/N_J)^+$$ so that $r(e+N_J) \geq v + N_J$.

To see that $e+N_J$ is an Archimedean order unit, suppose that $$r(e+N_J) + (v +N_J) \in (V/N_J)^+ \text{ for all $r > 0$.}$$  Then for any $r > 0$ we have that $((r/2)e + v)+N_J \in (V/N_J)^+$.  Thus there exists $j \in J$ such that $j + (r/2)e + (r/2e + v) \in V^+$.  Hence $j + re + v \in V^+$, and since $r>0$ was arbitrary we have that $v + N_J \in (V/N_J)^+$.

Finally, let $D := \{ v + J \in V/J : (re + v)+ J \in V^+ + J \text{ for all $r >0$}\}$ and $N := D \cap -D$.  Then, by definition, the Archimedean quotient of $V$ by $J$ is the ordered vector space $( (V/J)/N, D + N)$ with Archimedean order unit $(e+J) + N$.  Define $\psi : (V/J)/N \to V / N_J$ by $\psi ((v+J) +N) = v + N_J$.  We see that $\psi$ is well defined because if $(v+J) + N = (w +J) +N$, then $(v-w) + J \in N$ so that for all $r >0$ we have $[re \pm (v-w)] +J \in V^+ + J$, and hence there exists $j \in J$ such that $j + re \pm (v-w) \in V^+$, and $v + N_J = w + N_J$.  Furthermore, $\psi$ is clearly unital, linear, and surjective.  To see that $\psi$ is injective, let $\psi ((v+J) +N) = 0$.  Then $v \in N_J$, so for all $r > 0$ there exists $j,k \in J$ such that $j + re + v \in V^+$ and $k + re - v \in V^+$.  Thus $(re \pm v) + J \in V^+ + J$ so $v +J \in D \cap -D = N$, and $(v+J)+ N = 0$.  Moreover, we have that 
\begin{align*}
(v+J)+N \in D + N &\iff v+J \in D \\
&\iff (re + v)+J \in V^+ + J \text{ for all $r > 0$} \\
&\iff \text{$\forall$ $r > 0$ $\exists$ $j \in J$ such that $j + re + v \in V^+$} \\
&\iff v + N_J \in (V / N_J)^+.
\end{align*}
Thus $\psi$ is an order isomorphism.
\end{proof}

\begin{theorem} \label{first-iso-real-thm}
Let $(V,V^+)$ be an ordered vector space with Archimedean order unit $e$, and let $(W,W^+)$ be an ordered vector space with Archimedean order unit $e'$.  If $\phi : V \to W$ is a unital positive linear map, then $\ker \phi$ is an order ideal and the Archimedean quotient of $V$ by $\ker \phi$ is unitally order isomorphic to $V / \ker \phi$ with positive cone 
$$(V / \ker \phi)^+ = \{ v + \ker \phi : \text{$\forall$ $r >0$ $\exists$ $j \in \ker \phi$ such that $j + re + v \in V^+$ } \} $$ and Archimedean order unit $e + \ker \phi$.  In addition, the map $\widetilde{\phi} : V / \ker \phi \to W$ given by $\widetilde{\phi} (v + \ker \phi) = \phi (v)$ is a unital positive linear map.  Moreover, if $\phi (V^+) = W^+ \cap \im \phi$, then $\widetilde{\phi}$ is an order isomorphism from $V / \ker \phi$ onto $\im \phi$.
\end{theorem}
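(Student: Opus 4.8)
The plan is to break the statement into four pieces and dispatch them in order, leaning on Proposition~\ref{Arch-one-quotient-prop} for the description of the Archimedean quotient. First I would check that $\ker \phi$ is an order ideal. It is clearly a subspace, and if $p \in \ker \phi$ with $0 \le q \le p$, then positivity of $\phi$ gives $0 \le \phi(q) \le \phi(p) = 0$, so $\phi(q) \in W^+ \cap -W^+ = \{0\}$, whence $q \in \ker \phi$. This is precisely the observation recorded just before the definition of order ideal.

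Next I would identify the subspace $N_J$ of Proposition~\ref{Arch-one-quotient-prop}, taken with $J = \ker \phi$, with $\ker \phi$ itself; this is the heart of the argument and the place where the Archimedean hypothesis on $W$ is used. The inclusion $\ker \phi \subseteq N_{\ker \phi}$ is immediate: for $v \in \ker \phi$ and any $r > 0$ the elements $j = -v$ and $k = v$ lie in $\ker \phi$ and satisfy $j + re + v = re \in V^+$ and $k + re - v = re \in V^+$. For the reverse inclusion, let $v \in N_{\ker \phi}$; then for each $r > 0$ there are $j,k \in \ker \phi$ with $j + re + v \in V^+$ and $k + re - v \in V^+$, and applying the unital positive map $\phi$ gives $re' + \phi(v) \in W^+$ and $re' - \phi(v) \in W^+$ for all $r > 0$. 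Since $e'$ is Archimedean, this forces $\phi(v) \in W^+ \cap -W^+ = \{0\}$, so $v \in \ker \phi$. With $N_{\ker \phi} = \ker \phi$ established, Proposition~\ref{Arch-one-quotient-prop} immediately yields that the Archimedean quotient of $V$ by $\ker \phi$ is unitally order isomorphic to $V/\ker \phi$ equipped with the stated positive cone and Archimedean order unit $e + \ker \phi$.

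Then I would verify that $\widetilde{\phi}$ is a well-defined unital positive linear map. Well-definedness and linearity are the usual first-isomorphism-theorem facts, and unitality is $\widetilde{\phi}(e + \ker \phi) = \phi(e) = e'$. For positivity, if $v + \ker \phi$ lies in the displayed cone, then for every $r > 0$ there is $j \in \ker \phi$ with $j + re + v \in V^+$; applying $\phi$ gives $re' + \phi(v) \in W^+$ for all $r > 0$, and the Archimedean property of $e'$ again forces $\phi(v) = \widetilde{\phi}(v + \ker \phi) \in W^+$.

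Finally, under the hypothesis $\phi(V^+) = W^+ \cap \im \phi$, I would show $\widetilde{\phi}$ is an order isomorphism onto $\im \phi$, where $\im \phi$ carries the induced cone $W^+ \cap \im \phi$. Since $\widetilde{\phi}$ is already a linear bijection of $V/\ker \phi$ onto $\im \phi$ and is positive, it remains only to check that it \emph{reflects} positivity. Suppose $\phi(v) \in W^+ \cap \im \phi$. By the hypothesis, $\phi(v) \in \phi(V^+)$, so $\phi(v) = \phi(p)$ for some $p \in V^+$; then $p - v \in \ker \phi$, and taking $j = p - v$ gives $j + re + v = p + re \in V^+$ for every $r > 0$, so $v + \ker \phi$ lies in the cone of $V/\ker \phi$. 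Hence $\widetilde{\phi}$ and its inverse are both positive, completing the proof. I expect the identification $N_{\ker \phi} = \ker \phi$ to be the main (though still short) obstacle, since it is what collapses the two-stage Archimedean quotient into the single clean quotient $V/\ker \phi$.
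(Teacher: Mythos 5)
Your proposal is correct and follows essentially the same route as the paper's proof: establish that $\ker\phi$ is an order ideal, show $N_{\ker\phi} = \ker\phi$ using the Archimedean property of $e'$, invoke Proposition~\ref{Arch-one-quotient-prop} to collapse the Archimedean quotient to $V/\ker\phi$, and then verify positivity of $\widetilde{\phi}$ and, under the hypothesis $\phi(V^+) = W^+ \cap \im\phi$, that $\widetilde{\phi}$ reflects positivity. The only differences are cosmetic (e.g., exhibiting $j = -v$, $k = v$ explicitly for the trivial inclusion $\ker\phi \subseteq N_{\ker\phi}$, which the paper simply calls trivial).
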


\begin{proof}
To see that $\ker \phi$ is an order ideal, suppose that $0 \leq q \leq p$ and $p \in \ker \phi$.  Since $\phi$ is positive we have $0 \leq \phi(q) \leq \phi(p) = 0$, so that $\phi(q) = 0$ and $q \in \ker \phi$.

In addition, if we let $J := \ker \phi$, then we shall show that $J = N_J$.  We trivially have that $J \subseteq N_J$.  For the reverse inclusion suppose that $v \in N_J$.  Then for all $r > 0$ there exists $j,k \in J$ such that $j + re + v \in V^+$ and $k + re - v \in V^+$.  Because $\phi$ is positive and unital, $re' + \phi(v) = \phi(j + re + v) \in W^+$ and $re' - \phi(v) = \phi (k + re - v) \in W^+$.  Thus the fact that $e'$ is an Archimedean order unit implies that $\phi(v) = 0$ and $v \in \ker \phi = J$.

It follows from Proposition~\ref{Arch-one-quotient-prop} that the Archimedean quotient of $V$ by $\ker \phi$ is isomorphic to the quotient $V / \ker \phi$ with positive cone $$(V / \ker \phi)^+ = \{ v + \ker \phi : \text{$\forall$ $r >0$ $\exists$ $j \in \ker \phi$ such that $j + re + v \in V^+$ } \} $$ and Archimedean order unit $e + \ker \phi$.  It then follows that $v + J \in (V / \ker \phi)^+$ implies that for all $r > 0$ there is $j \in \ker \phi$ such that $j + re + v \in V^+$.  By the positivity of $\phi$ we have $\phi (j + re + v) \in W^+$ and $re'+\phi(v) \in W^+$ for all $r  >0$.  Since $e'$ is an Archimedean order unit for $W$, we have that $\phi(v) \in W^+$ and $\widetilde{\phi}$ is a positive map.  

Moreover, if $\phi (V^+) = W^+ \cap \im \phi$, then $\widetilde{\phi} (v) \in W^+$ implies $\phi(v) \in W^+ \cap \im \phi$, so that $\phi (v) \in \phi (V^+)$.  Hence $\phi(v) = \phi(p)$ for some $p \in V^+$.  Thus $v + j = p$ for some $j \in \ker \phi$, and it follows that $j + re + v \in V^+$ for all $r > 0$.  Thus $v + \ker \phi \in (V / \ker \phi)^+$, and $\widetilde{\phi}$ is an order isomorphism onto $\im \phi$.
\end{proof}

\begin{remark}
Theorem~\ref{first-iso-real-thm} may be viewed as a ``First Isomorphism Theorem" for Archimedean ordered vector spaces using the Archimedean quotient.  Note, however, that $\widetilde{\phi}$ need not be an order isomorphism without the condition $\phi (V^+) = W^+ \cap \im \phi$ satisfied.
\end{remark}

\section{Ordered $*$-vector spaces} \label{Complex-Vec-Spaces}

In this section we will consider complex vector spaces with a $*$-operation, and describe what it means for these spaces to be ordered.  We also develop analogues of the results in \S \ref{Real-Vec-Spaces} for these ordered complex vector spaces.

\begin{definition}
A \emph{$*$-vector space} consists of a complex vector space $V$ together with a map $* : V \to V$ that is involutive (i.e., $(v^*)^* = v$ for all $v \in V$) and conjugate linear (i.e., $(\lambda v + w)^* = \overline{\lambda} v^* + w^*$ for all $\lambda \in \C$ and $v,w \in V$).  If $V$ is a $*$-vector space, then we let $V_h := \{ x \in V : x^* = x \}$ and we call the elements of $V_h$ the \emph{hermitian elements} of $V$.
\end{definition}

It is easy to see that $V_h$ is a real subspace of the complex vector space $V$.  Also note that any $v \in V$ may be written uniquely as $v = x + iy$ with $x, y \in V_h$.  In fact, $v = x+iy$ with $x,y \in V_h$ if and only if $x = (v + v^*)/2$ and $y = (v-v^*)/2i$.  We call $x$ and $y$ the \emph{real part} and \emph{imaginary part} of $v$, respectively, and we write $$\operatorname{Re} (v) := \frac{v+v^*}{2} \qquad \operatorname{Im} (v) := \frac{v-v^*}{2i}.$$ 

Note that we also have $V \cong V_h \oplus i V_h$ as real vector spaces.

\begin{definition}
If $V$ is a $*$-vector space, we say that $(V, V^+)$ is an \emph{ordered $*$-vector space} if $V^+ \subseteq V_h$ and the following two conditions hold:
\begin{enumerate}
\item $V^+$ is a cone of $V_h$ (i.e., $V^+ + V^+ \subseteq V^+$ and $aV^+ \subseteq V^+$ for all $a \in [0, \infty)$).
\item $V^+ \cap - V^+ = \{ 0 \}$.
\end{enumerate}
In this case for $v, w \in V_h$ we write $v \geq w$ to mean that $v-w \in V^+$.
\end{definition}

\begin{definition}
If $(V, V^+)$ is an ordered $*$-vector space, then an element $e \in V_h$ is an \emph{order unit} if for all $x \in V_h$ there exists a real number $r > 0$ such that $re \geq x$.  If $e$ is an order unit, we say that $e$ is \emph{Archimedean} if whenever $x \in V_h$ and $re+x \geq 0$ for all $r \in (0,\infty)$, then $x \geq 0$.
\end{definition}

\begin{remark}
Note that $(V, V^+)$ is an ordered $*$-vector space if and only if $(V_h, V^+)$ is an ordered real vector space in the sense of Definition~\ref{real-ord-v-s-def}.  Similarly, $e$ is an order unit for the ordered $*$-vector space $(V,V^+)$ if and only if $e$ is an order unit for the ordered real vector space $(V_h, V^+)$ as in Definition~\ref{ord-unit-real-def}; and $e$ is Archimedean for $(V,V^+)$ if and only if $e$ is Archimedean for $(V_h,V^+)$ as in Definition~\ref{Arch-ord-unit-real-def}.
\end{remark}

\subsection{Positive $\C$-linear functionals and states} \label{C-functionals-subsec}

We now turn our attention to positive maps, positive $\C$-linear functionals, and states on ordered $*$-vector spaces.  Many of the results here may be viewed as analogues of the results in \S\ref{subsec-positive-R-functionals}.  

\begin{definition}
Let $(V, V^+)$ be an ordered $*$-vector space with order unit $e$.  A $\C$-linear functional $f : V \to \C$ is \emph{positive} if $f (V^+) \subseteq [0,\infty)$ and a {\em state} if it is positive and $f(e)=1$.
\end{definition}

\begin{definition}
Let $(V, V^+)$ be an ordered $*$-vector space with order unit $e$, and let $(W,W^+)$ be an ordered $*$-vector space with order unit $e'$.  A linear map $\phi : V \to W$ is \emph{positive} if $v \in V^+$ implies $\phi(v) \in W^+$, and \emph{unital} if $\phi(e) = e'$.  A linear map $\phi : V \to W$ is an \emph{order isomorphism} if $\phi$ is bijective and $v \in V^+$ if and only if $\phi(v) \in W^+$.
\end{definition}

\begin{proposition}
Let $(V,V^+)$ be an ordered $*$-vector space with an order unit, and let $(W,W^+)$ be an ordered $*$-vector space.  If $\phi : V \to W$ is a positive linear map, then $\phi(v^*) = \phi(v)^*$ for all $v \in V$.
\end{proposition}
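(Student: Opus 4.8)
The plan is to reduce the claim to the statement that $\phi$ carries hermitian elements to hermitian elements, and then exploit the real/imaginary part decomposition available in any $*$-vector space. First I would observe that because $V$ has an order unit $e$, the pair $(V_h, V^+)$ is an ordered real vector space with order unit $e$ (by the remark identifying the $*$-vector space structure with the real structure on $V_h$), so Lemma~\ref{basic-order-unit-facts}(d) guarantees that $V^+$ is a full cone in $V_h$; that is, $V_h = V^+ - V^+$.

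The key step is then to show $\phi(V_h) \subseteq W_h$. Given $x \in V_h$, fullness lets me write $x = p - q$ with $p, q \in V^+$. Since $\phi$ is positive, $\phi(p), \phi(q) \in W^+ \subseteq W_h$, and because $W_h$ is a real subspace of $W$ we obtain $\phi(x) = \phi(p) - \phi(q) \in W_h$, i.e.\ $\phi(x)^* = \phi(x)$. I expect this to be the main (indeed the only) obstacle, in the sense that it is the single place where the hypothesis on the order unit of $V$ is genuinely used: without an order unit the cone $V^+$ need not be full, and one could not represent an arbitrary hermitian element as a difference of positive elements, so the conclusion could fail. Note that $W$ is not required to have an order unit, which is consistent with this, since fullness is only needed in $V_h$.

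Finally, for an arbitrary $v \in V$, I would write $v = \operatorname{Re}(v) + i\operatorname{Im}(v)$ with $\operatorname{Re}(v), \operatorname{Im}(v) \in V_h$, so that $v^* = \operatorname{Re}(v) - i\operatorname{Im}(v)$. Applying the $\C$-linearity of $\phi$ together with the fact just established that $\phi(\operatorname{Re}(v))$ and $\phi(\operatorname{Im}(v))$ are hermitian, a direct computation yields $\phi(v)^* = \bigl(\phi(\operatorname{Re}(v)) + i\phi(\operatorname{Im}(v))\bigr)^* = \phi(\operatorname{Re}(v)) - i\phi(\operatorname{Im}(v)) = \phi(\operatorname{Re}(v) - i\operatorname{Im}(v)) = \phi(v^*)$, which completes the argument.
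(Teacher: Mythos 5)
Your proof is correct and follows essentially the same route as the paper's: both arguments use the order unit to get fullness of the cone ($V_h = V^+ - V^+$), deduce $\phi(V_h) \subseteq W_h$ from positivity, and then conclude by applying $\C$-linearity of $\phi$ to the decomposition $v = \operatorname{Re}(v) + i\operatorname{Im}(v)$. Your write-up merely makes explicit the citations (Lemma~\ref{basic-order-unit-facts}(d)) and the role of the order unit hypothesis that the paper leaves implicit.
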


\begin{proof}
Since $\phi$ is positive and $V_h = V^+ - V^+$, we have that $\phi(V_h) \subseteq W_h$.  Thus for any $v \in V$, if we write $v = x+iy$ for $x, y \in V_h$, we see that $\phi(v^*) = \phi (x-iy) = \phi(x) - i \phi(y) = (\phi(x) + i \phi(y))^* = \phi(x + i y)^* = \phi(v)^*$.
\end{proof}

\begin{corollary} \label{pos-preserves-star}
If $(V,V^+)$ is an ordered $*$-vector space with an order unit and $f : V \to \C$ is a positive linear functional, then $f(v^*) = \overline{f(v)}$ for all $v \in V$.
\end{corollary}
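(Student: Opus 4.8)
The plan is to recognize this corollary as the special case $W = \C$ of the immediately preceding proposition. First I would equip $\C$ with the structure of an ordered $*$-vector space: take the $*$-operation to be complex conjugation $z \mapsto \overline{z}$, which is manifestly involutive and conjugate linear, so that $\C_h = \R$; then set $\C^+ := [0,\infty)$, which is a cone in $\R = \C_h$ satisfying $\C^+ \cap -\C^+ = \{0\}$. Thus $(\C, [0,\infty))$ is an ordered $*$-vector space (with $1$ as an order unit, though no order unit on the target is actually needed).

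Next I would observe that the hypothesis that $f : V \to \C$ is a positive linear functional means precisely that $f(V^+) \subseteq [0,\infty) = \C^+$; in other words, $f$ is a positive linear map from $V$ into the ordered $*$-vector space $(\C, \C^+)$ in the sense required by the preceding proposition. Since $V$ carries an order unit, that proposition applies directly and yields $f(v^*) = f(v)^*$ for all $v \in V$. Finally, because the $*$-operation on $\C$ is complex conjugation, we have $f(v)^* = \overline{f(v)}$, and hence $f(v^*) = \overline{f(v)}$, which is the claim.

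There is essentially no obstacle here. The only point requiring any care is confirming that $(\C, [0,\infty))$ genuinely satisfies the axioms of an ordered $*$-vector space, so that the previous proposition may be legitimately invoked with $W = \C$; this verification is immediate from the definitions.
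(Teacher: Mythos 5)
Your proposal is correct and matches the paper's intended argument: the paper states this as an immediate corollary of the preceding proposition, precisely by viewing $(\C, [0,\infty))$ with complex conjugation as the target ordered $*$-vector space, which is exactly the specialization you carry out.
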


\begin{definition} \label{complexification-of-f-def}
Let $(V, V^+)$ be an ordered $*$-vector space.  If $f: V_h \to \R$, then we define $\widetilde{f} : V \to \C$ by $$\widetilde{f} (v) := f (\operatorname{Re}(v)) + i f(\operatorname{Im}(v)).$$
\end{definition}

\begin{proposition} \label{complexification-linear-positive}
Let $(V, V^+)$ be an ordered $*$-vector space.  If $f : V_h \to \R$ is $\R$-linear, then $\widetilde{f} : V \to \C$ is $\C$-linear.  In addition, $f$ is positive if and only if $\widetilde{f}$ is positive, and $f$ is a state if and only if $\widetilde{f}$ is a state.
\end{proposition}

\begin{proof}
Let $\lambda \in \C$ and $v, w \in V$.  Write $\lambda = a+ib$ for $a,b \in \R$, and write $v = x+iy$ and $w = x' + i y'$ for $x,y,x',y' \in V_h$.  If $f$ is linear, then 
\begin{align*}
\widetilde{f}(\lambda v + w) &= \widetilde{f} ((a+ib)(x+iy) + (x'+iy')) \\
&= \widetilde{f} ((ax-by+x')+i(bx+ay+y')) \\
&= f(ax-by+x')+if(bx+ay+y') \\
&= (af(x) - bf(y) + f(x')) + i(bf(x)+af(y)+f(y')) \\
&= (a+ib) (f(x) + i f(y)) + (f(x') + i f(y')) \\
& = \lambda \widetilde{f} (v) + \widetilde{f} (w)
\end{align*}
so $\widetilde{f}$ is $\C$-linear.  In addition, since $\widetilde{f} (V_h) = f(V_h)$ we see that $f$ is positive if and only if $\widetilde{f}$ is positive, and since $f(e) = \widetilde{f} (e)$ we see that $f$ is a state if and only if $\widetilde{f}$ is a state.
\end{proof}

The following proposition allows us to characterize the positive linear functionals on ordered $*$-vector spaces with an Archimedean unit.

\begin{proposition} \label{pos-iff-restrict-pos}
Let $(V, V^+)$ be an ordered $*$-vector space with order unit $e$.  If $f : V \to \C$ is a $\C$-linear functional, then $f$ is positive if and only if $f = \widetilde{g}$ for some positive $\R$-linear functional $g : V_h \to \R$.  (See Definition~\ref{complexification-of-f-def}.)
\end{proposition}

\begin{proof}
Suppose $f = \widetilde{g}$ for some positive $\R$-linear functional $g : V_h \to \R$.  Then $f(V^+) = g(V^+) \subseteq [0,\infty)$ and $f$ is positive.

Conversely, suppose that $f : V \to \C$ is positive, so that $f(V^+) \subseteq [0,\infty)$.  Using the fact that $V^+$ is a full cone of $V_h$, for any $x \in V_h$ we may write $x = x^+ - x^-$ with $x^+, x^- \in V^+$.  Hence $f(x) = f(x^+) - f(x^-) \in \R$, and we see that $f(V_h) \subseteq \R$.  Let $g := f|_{V_h}$.  Then $g : V_h \to \R$ is a positive $\R$-linear functional.  Furthermore, for any $v \in V$ if we write $v = x + iy$ with $x, y \in V_h$, then we see that $$\widetilde{g} (v) = g(x) + i g(y) = f(x) + i f(y) = f (x +iy) = f (v)$$ so that $f = \widetilde{g}$.
\end{proof}

\begin{proposition} \label{states-sep-pts}
Let $(V, V^+)$ be an ordered $*$-vector space with Archimedean order unit $e$.  If $v \in V$ and $f (v) =0$ for every state $f : V \to \C$, then $v = 0$.
\end{proposition}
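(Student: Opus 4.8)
The plan is to reduce the complex statement to its real analogue, which has already been established as Proposition~\ref{pos-R-funct-separating}. The key observation is that the complexified functionals $\widetilde{g}$ constructed from real states are themselves states on $V$, and conversely every complex state restricts to a real state on $V_h$; this correspondence is exactly what Proposition~\ref{complexification-linear-positive} and Proposition~\ref{pos-iff-restrict-pos} provide.

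First I would take $v \in V$ with $f(v) = 0$ for every state $f : V \to \C$. Writing $v = x + iy$ with $x, y \in V_h$ (using $\operatorname{Re}$ and $\operatorname{Im}$ as defined after the $*$-vector space definition), the goal is to show $x = 0$ and $y = 0$ separately, since these are elements of the real ordered space $(V_h, V^+)$. The natural move is to feed an arbitrary \emph{real} state $g : V_h \to \R$ into the hypothesis. By Proposition~\ref{complexification-linear-positive}, its complexification $\widetilde{g} : V \to \C$ is a state on $V$, so the hypothesis gives $\widetilde{g}(v) = 0$. But by Definition~\ref{complexification-of-f-def}, $\widetilde{g}(v) = g(\operatorname{Re}(v)) + i\, g(\operatorname{Im}(v)) = g(x) + i\, g(y)$. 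Since $g$ is real-valued, taking real and imaginary parts of the equation $g(x) + i\,g(y) = 0$ forces $g(x) = 0$ and $g(y) = 0$.

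This holds for \emph{every} real state $g$ on $V_h$. Now I invoke that $e$ is an Archimedean order unit for $(V_h, V^+)$ (which follows from the remark identifying Archimedean order units for $(V,V^+)$ with those for $(V_h,V^+)$). Applying Proposition~\ref{pos-R-funct-separating} to $x$ gives $x = 0$, and applying it to $y$ gives $y = 0$. Hence $v = x + iy = 0$, completing the argument.

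I do not anticipate a serious obstacle here: the proof is essentially a bookkeeping reduction, and every ingredient is already in hand. The one point requiring a modicum of care is ensuring that the correspondence between real and complex states is used in the right direction — namely that I only need complexifications of real states (guaranteed to be complex states by Proposition~\ref{complexification-linear-positive}), rather than needing to analyze an arbitrary complex state. This keeps the argument clean, since I can quantify over all real states $g$ on $V_h$ and extract the two scalar equations $g(x) = 0$ and $g(y) = 0$ directly from the separation of real and imaginary parts.
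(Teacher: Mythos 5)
Your proposal is correct and matches the paper's proof essentially verbatim: both decompose $v = x + iy$, feed the complexification $\widetilde{g}$ of an arbitrary real state $g$ into the hypothesis to get $g(x) = g(y) = 0$, and then apply Proposition~\ref{pos-R-funct-separating} to conclude $x = y = 0$. The only cosmetic difference is that you explicitly flag (correctly) that only the direction of the real--complex state correspondence given by Proposition~\ref{complexification-linear-positive} is needed.
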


\begin{proof}
Write $v = x + iy$ with $x, y \in V_h$.  Then for any state $g : V_h \to \R$, we have that $\widetilde{g} : V \to \C$ is a state, and by hypothesis $\widetilde{g}(v) = 0$.   This implies $g(x) + i g(y) = 0$, which implies $g(x)=g(y)=0$.  But then Proposition~\ref{pos-R-funct-separating} implies that $x=y=0$, and hence $v=0$.
\end{proof}

\begin{proposition} \label{complex-states-pos-imply-pos}
Let $(V, V^+)$ be an ordered $*$-vector space with Archimedean order unit $e$.  If $v \in V$ and $f(v) \geq 0$ for every state $f : V \to \C$, then $v \in V^+$.
\end{proposition}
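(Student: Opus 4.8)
The plan is to reduce the complex statement to the two real results already established, namely Proposition~\ref{pos-R-funct-separating} and Proposition~\ref{pos-for-states-implies-pos-prop}, by exploiting the complexification correspondence of Proposition~\ref{complexification-linear-positive}. Recall that the hypothesis $f(v) \geq 0$ for every state means that $f(v)$ is a nonnegative real number for each complex state $f : V \to \C$. I would first write $v = x + iy$ with $x = \operatorname{Re}(v)$ and $y = \operatorname{Im}(v)$ in $V_h$, so that the goal becomes showing that the imaginary part $y$ vanishes (so that $v$ is hermitian) and that the real part $x$ lies in $V^+$.

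The key observation is that every real state supplies a complex state to test against. Given any state $g : V_h \to \R$, Proposition~\ref{complexification-linear-positive} guarantees that $\widetilde{g} : V \to \C$ is a state on $V$, and by Definition~\ref{complexification-of-f-def} we have $\widetilde{g}(v) = g(x) + i g(y)$. By hypothesis $\widetilde{g}(v) \geq 0$, so $g(x) + i g(y)$ is a nonnegative real number. Since $g$ is real-valued, both $g(x)$ and $g(y)$ are real, and nonnegativity forces $g(y) = 0$ together with $g(x) \geq 0$. Crucially, these two conclusions hold for \emph{every} state $g$ on the ordered real vector space $(V_h, V^+)$, which carries the Archimedean order unit $e$.

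From here the two real propositions finish the argument. Since $g(y) = 0$ for all states $g : V_h \to \R$, Proposition~\ref{pos-R-funct-separating} yields $y = 0$, and hence $v = x \in V_h$. Then, since $g(x) \geq 0$ for all states $g : V_h \to \R$, Proposition~\ref{pos-for-states-implies-pos-prop} gives $x \in V^+$, and therefore $v = x \in V^+$, as desired.

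I do not anticipate a genuinely hard step here; the proof is a clean transfer from the complex setting to the real one. The one point that requires care — and which I regard as the main conceptual step — is recognizing that one must first extract $y = 0$ to know that $v$ is hermitian \emph{before} invoking the real positivity criterion, since $V^+ \subseteq V_h$ and positivity is only defined for hermitian elements. The separation of the single complex hypothesis into the two independent real conditions $g(y) = 0$ and $g(x) \geq 0$ is exactly what makes both real propositions applicable.
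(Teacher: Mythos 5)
Your proof is correct and follows essentially the same route as the paper: decompose $v = x + iy$ with $x, y \in V_h$, test against complexifications $\widetilde{g}$ of real states $g$ to conclude $g(y) = 0$ and $g(x) \geq 0$ for all real states, then invoke Proposition~\ref{pos-R-funct-separating} to get $y = 0$ and Proposition~\ref{pos-for-states-implies-pos-prop} to get $x \in V^+$. The paper's argument is identical, including the order of the two final applications.
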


\begin{proof}
Write $v = x + iy$ with $x, y \in V_h$.  Then for any state $g : V_h \to \R$, we have that $\widetilde{g} : V \to \C$ is a state and by hypothesis, $\widetilde{g}(v) \geq 0$.   This implies $g(x) + i g(y) \geq 0$, which implies $g(x) \geq 0$ and $g(y)=0$.  But then Proposition~\ref{pos-R-funct-separating} implies that $y=0$, and Proposition~\ref{pos-for-states-implies-pos-prop} implies that $x \geq 0$.  Hence $v = x \geq 0$, and $v \in V^+$.
\end{proof}

\begin{remark}
Note that when $V$ is a $*$-vector space, we have $V = V_h \oplus i V_h$.  In fact, $V$ is the complexification of the real vector space $V_h$, and alternatively we may describe $V$ as the complex vector space $V_h \otimes_\R \C$.  Likewise, if $f : V_h \to \R$ is $\R$-linear, the $\C$-linear map $\widetilde{f} : V \to \C$ is the complexification of $f$.  The complexification is an (additive) functor from the category of real vector spaces to the category of complex vector spaces, and many of our results in this section may be viewed as extending results from the real ordered vector spaces to their complexifications.  We have chosen to give direct proofs rather than appeal to categorical arguments, since we feel these proofs are more straightforward and illuminate the relevant phenomena.
\end{remark}

\subsection{The Archimedeanization of an ordered $*$-vector space} \label{Arch-Complex-Vec-Spaces}

Here we extend the process of Archimedeanization for real vector spaces described in \S\ref{real-Arch-subsec} to complex $*$-vector spaces.  Let $(V,V^+)$ be an ordered $*$-vector space with order unit $e$.  Define $D := \{ v \in V_h : re + v \in V^+ \text{ for all $r > 0$} \}$, and $N_\R := D \cap -D$.  As in Remark~\ref{D-N-real-remark}, we have that $N_\R$ is a real subspace of $V_h$ and $$N_\R = \bigcap_{f : V_h \to \R \atop \text{is a state}} \ker f.$$  In analogy, let us define $$N :=  \bigcap_{f : V \to \C \atop \text{is a state}} \ker f.$$  Using Proposition~\ref{pos-iff-restrict-pos} one can easily verify that $N = N_\R \oplus i N_\R$.  One can also see from the definition that $N$ is a complex subspace of $V$ that is closed under the $*$-operation.  Thus we may form the quotient $V/N$ with the well-defined $*$-operation $(v+N)^* = v^* + N$.

Note that for any $v +N \in (V/N)_h$, we have that $v+N = v^*+N$, and hence $v+N = \frac{v+v^*}{2} + N = \operatorname{Re} (v) + N$.  Thus $(V/N)_h = \{ v + N : v \in V_h \}$.  We define the positive elements of $(V/N)_h$ to be the set $(V/N)^+ := \{ v+ N : v \in D \}$. One can easily verify that $((V/N)_h, (V/N)^+)$ is order isomorphic to $(V_h/N_\R, D+N_\R)$ via the map $v + N \mapsto v+N_\R$.  Thus, it follows from Theorem~\ref{V/N-is-Arch-prop} that $(V/N, (V/N)^+)$ is an ordered $*$-vector space with Archimedean order unit $e+N$.

\begin{definition}
Let $(V,V^+)$ be an ordered $*$-vector space with order unit $e$.  Define $D := \{ v \in V_h : re + v \in V^+ \text{ for all $r > 0$} \}$ and define $$N :=  \bigcap_{f : V \to \C \atop \text{is a state}} \ker f.$$  We define $V_\textnormal{Arch}$ to be the ordered $*$-vector space $(V/N, (V/N)^+)$ with the Archimedean order unit $e+N$.  We call $V_\textnormal{Arch}$ the \emph{Archimedeanization} of $V$.

\end{definition}

In addition, we have the following complex version of Theorem~\ref{real-Arch-charact-thm}.

\begin{theorem} \label{complex-Arch-charact-thm}
Let  $(V,V^+)$ be an ordered $*$-vector space with an order unit $e$, and let $V_\textnormal{Arch}$ be the Archimedeanization of $V$.  Then there exists a unital surjective positive linear map $q : V \to V_\textnormal{Arch}$ with the property that whenever $(W,W^+)$ is an ordered $*$-vector space with Archimedean order unit $e'$, and $\phi : V \to W$ is a unital positive linear map, then there exists a unique positive linear map $\widetilde{\phi} : V_\textnormal{Arch} \to W$ with $\phi = \widetilde{\phi} \circ q$.
\begin{equation*}
\xymatrix{  V_\textnormal{Arch} \ar@{-->}[rd]^{\widetilde{\phi}} & \\ V \ar[u]^q  \ar[r]^<>(.35)\phi & W}
\end{equation*}
In addition, this property characterizes $V_\textnormal{Arch}$:  If $V'$ is any ordered $*$-vector space with an Archimedean order unit and $q' : V \to V'$ is a unital surjective positive linear map with the above property, then $V'$ is isomorphic to $V_\textnormal{Arch}$ via a unital order isomorphism $\psi : V_\textnormal{Arch} \to V'$ with $\psi \circ q = q'$.
\end{theorem}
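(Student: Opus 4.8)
The plan is to follow the proof of the real-case Theorem~\ref{real-Arch-charact-thm} essentially verbatim, the only genuinely new work being the bookkeeping needed to pass the hermitian-level arguments through the $*$-structure. First I would take $q : V \to V_\textnormal{Arch} = V/N$ to be the quotient map. It is plainly $\C$-linear, surjective, and unital (since $q(e) = e+N$), and it is positive because $V^+ \subseteq D$ forces $q(V^+) \subseteq (V/N)^+$. That $(V/N, (V/N)^+)$ is an ordered $*$-vector space with Archimedean order unit $e+N$ has already been established in the discussion preceding the definition of the Archimedeanization, via the order isomorphism $((V/N)_h, (V/N)^+) \cong (V_h/N_\R, D+N_\R)$ together with Theorem~\ref{V/N-is-Arch-prop}.

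Next, given an ordered $*$-vector space $(W,W^+)$ with Archimedean order unit $e'$ and a unital positive linear map $\phi : V \to W$, the crucial step is to show $N \subseteq \ker \phi$, for then $\widetilde{\phi}(v+N) := \phi(v)$ is well defined, $\C$-linear, and satisfies $\phi = \widetilde{\phi} \circ q$. To prove the inclusion I would use the decomposition $N = N_\R \oplus i N_\R$ and reduce to showing $\phi(N_\R) = \{0\}$. Fix $a \in N_\R = D \cap -D$, so that $re \pm a \in V^+$ for every $r > 0$. Since $\phi$ is positive, the earlier proposition that positive maps preserve the adjoint gives $\phi(V_h) \subseteq W_h$; in particular $\phi(a) \in W_h$, and applying $\phi$ yields $re' \pm \phi(a) \in W^+$ for all $r > 0$. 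The Archimedean property of $e'$ then forces $\phi(a) \geq 0$ and $-\phi(a) \geq 0$, hence $\phi(a) = 0$. Writing a general $v \in N$ as $v = a + ib$ with $a,b \in N_\R$ then gives $\phi(v) = \phi(a) + i\phi(b) = 0$, as needed.

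Positivity of $\widetilde{\phi}$ is the same argument without the symmetrization: if $v + N \in (V/N)^+$ I may take $v \in D$, so that $re + v \in V^+$ for all $r > 0$; since $v \in V_h$ we have $\phi(v) \in W_h$, and applying $\phi$ gives $re' + \phi(v) \in W^+$ for all $r > 0$, whence $\widetilde{\phi}(v+N) = \phi(v) \in W^+$ by the Archimedean property of $e'$. Uniqueness of $\widetilde{\phi}$ follows because any $\psi$ with $\psi \circ q = \phi$ must agree with $\widetilde{\phi}$ on the range of $q$, which is all of $V_\textnormal{Arch}$. Finally, the characterization of $V_\textnormal{Arch}$ up to unital order isomorphism follows from the standard universal-property diagram chase, exactly as in the real case.

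I expect the only point requiring genuine care --- the ``main obstacle,'' such as it is --- to be ensuring, at each application of the Archimedean property, that the relevant image actually lands in $W_h$, so that the definition of Archimedean order unit (which is stated only for hermitian elements) applies; this is precisely where the preservation of the adjoint by positive maps is invoked. Everything else is a transcription of the real argument through the identification $V = V_h \oplus i V_h$, and could alternatively be obtained by applying Theorem~\ref{real-Arch-charact-thm} to $\phi|_{V_h} : V_h \to W_h$ and then complexifying.
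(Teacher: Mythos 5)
Your proposal is correct and follows essentially the same route as the paper: take $q$ to be the quotient map, show $N \subseteq \ker\phi$ by decomposing $v \in N$ into real and imaginary parts lying in $N_\R = D \cap -D$ and applying the Archimedean property of $e'$ to each, then note that positivity of $\widetilde{\phi}$, uniqueness, and the universal-property characterization go through exactly as in Theorem~\ref{real-Arch-charact-thm}. Your explicit remark that one must first know $\phi(V_h) \subseteq W_h$ (via preservation of adjoints by positive maps) before invoking the Archimedean property is a point the paper leaves implicit, but it is the same argument.
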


\begin{proof}
Write $V_\textnormal{Arch} = V/N$, and let $q : V \to V/N$ be the quotient map $q(v) = v + N$.  If $(W,W^+)$ is an ordered $*$-vector space with Archimedean order unit $e'$, and $\phi : V \to W$ is a unital positive linear map, then for any $v \in N$ we may write $v = \operatorname{Re} (v) + i \operatorname{Im} (v)$ with $\operatorname{Re} (v), \operatorname{Im} (v) \in N_\R = D \cap -D$.  Thus for $\operatorname{Re} (v)$ we have that $re + \operatorname{Re} (v) \in V^+$ and $re- \operatorname{Re} (v) \in V^+$ for all $r >0$.  Applying $\phi$ we then have that $re' + \phi(\operatorname{Re} (v)) \in W^+$ and $re'- \phi(\operatorname{Re} (v)) \in W^+$ for all $r >0$.  Since $e'$ is an Archimedean order unit, it follows that $\phi(\operatorname{Re} (v)) = 0$.  A similar argument shows that $\phi(\operatorname{Im} (v)) = 0$.  Thus $\phi(v) = \phi(\operatorname{Re} (v)) + i \phi(\operatorname{Im} (v)) = 0$, and $\phi$ vanishes on $N$.  It follows that $\widetilde{\phi} : V/N \to W$ given by $\widetilde{\phi} (v+N) = \phi(v)$ is well defined.  The rest of the proof is the same as the proof of Theorem~\ref{real-Arch-charact-thm}.
\end{proof}

\subsection{Quotients of ordered $*$-vector spaces} \label{Complex-quotients-subsec}

In this section we describe a process for forming quotients of ordered $*$-vector spaces and Archimedean $*$-vector spaces.  Many of these results may be viewed as complex versions of the results in \S\ref{Real-quotients-subsec}.

\begin{definition} 
Let $(V,V^+)$ be an ordered $*$-vector space.  A complex subspace $J \subseteq V$ is called \emph{self adjoint} if $J=J^*$.  A self-adjoint subspace $J$ is called an \emph{order ideal} if $p \in J \cap V^+$ and $0 \leq q \leq p$ implies that $q \in J$.
\end{definition}

\begin{remark}
Note that if $J$ is a self-adjoint subspace of an ordered $*$-vector space $V$, then we may define a $*$-operation on $V/J$ by $(v + J)^* = v^* + J$.  Furthermore, if $v + J \in (V/J)_h$, then $v + J = v^* + J$ and $v + J = \frac{v + v^*}{2} + J$.  This shows that $(V/J)_h = V_h + J$, where $V_h + J = \{ v + J : v \in V_h \}$.  It also follows that $V^+ + J \subseteq (V/J)_h$.  In addition, if we define $J_\R := J \cap V_h$, then $J_\R$ is a real subspace of $V$ and $J = J_\R \oplus i J_\R$.  
\end{remark}

The proof of the following proposition is identical to that of Proposition~\ref{order-quotients-prop}.

\begin{proposition} Let $(V,V^+)$ be an ordered $*$-vector space with order unit $e$ and let $J \subseteq V$ be an order ideal. Then $(V/J, V^++J)$ is an ordered $*$-vector space with order unit $e+J.$
\end{proposition}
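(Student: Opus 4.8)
The plan is to follow the argument of Proposition~\ref{order-quotients-prop} almost verbatim, using the preceding remark to dispose of the $*$-structure at the outset. That remark already supplies the $*$-operation $(v+J)^* = v^* + J$ on $V/J$, the identification $(V/J)_h = V_h + J$, and the inclusion $V^+ + J \subseteq (V/J)_h$. Consequently, by the definition of an ordered $*$-vector space, it suffices to check that $V^+ + J$ is a cone of the real space $(V/J)_h$ satisfying $(V^+ + J) \cap -(V^+ + J) = \{0 + J\}$, and that $e + J$ is an order unit.

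First I would record the routine facts. Since $V^+$ is a cone and $J$ is a subspace, $V^+ + J$ is closed under addition and under multiplication by nonnegative reals, so it is a cone in $(V/J)_h$. For the order unit, given $v + J \in (V/J)_h$ I may take $v \in V_h$; because $e$ is an order unit for $V$ there is $r > 0$ with $re - v \in V^+$, and then $r(e+J) - (v+J) = (re - v) + J \in V^+ + J$, so $e + J$ is an order unit.

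The one step requiring the order-ideal hypothesis --- and the only genuine content --- is the proper-cone condition, and here I would reproduce the real argument. Suppose $v + J$ lies in $(V^+ + J) \cap -(V^+ + J)$, say $v + J = p + J = -q + J$ with $p, q \in V^+$. Then $p + q \in J$, and since $V^+$ is a cone we also have $p + q \in V^+$, so $p + q \in J \cap V^+$. As $0 \leq p \leq p + q$, the defining property of an order ideal in the complex sense (which requires precisely $p + q \in J \cap V^+$) yields $p \in J$, whence $v + J = p + J = 0 + J$. This gives $(V^+ + J) \cap -(V^+ + J) = \{0 + J\}$ and completes the verification.

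I do not anticipate any real obstacle: the $*$-operation and the passage to hermitian elements are handled entirely by the preceding remark, and the order-theoretic core is identical to the real case. The only cosmetic difference is that the complex order-ideal axiom is phrased with $p \in J \cap V^+$ rather than with $p \in J$, a distinction that is automatically met here since $p + q \in V^+$; this is exactly why the authors can assert that the proof is identical to that of Proposition~\ref{order-quotients-prop}.
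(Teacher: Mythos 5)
Your proof is correct and takes essentially the same approach as the paper: the paper simply declares the proof identical to that of Proposition~\ref{order-quotients-prop}, and you reproduce exactly that argument, using the preceding remark to dispose of the $*$-structure and correctly observing that the complex order-ideal axiom applies since $p+q \in J \cap V^+$.
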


\begin{definition} Let $(V,V^+)$ be an ordered $*$-vector space with order unit $e$, and let $J \subseteq V$ be an order ideal. We call the ordered vector space $(V/J, V^++J)$ with order unit $e+J$, the \emph{quotient of $V$ by $J$}.   If, in addition, $e$ is an Archimedean order unit, then we call the Archimedeanization of $(V/N, V^++J)$ the \emph{Archimedean quotient of $V$ by $J$}.
\end{definition}

In analogy with Proposition~\ref{Arch-one-quotient-prop}, we would like to view the Archimedean quotient of a complex $*$-vector space by an order ideal as a single quotient.

\begin{proposition} \label{complex-Arch-one-quotient-prop}
Let $(V,V^+)$ be an ordered $*$-vector space with order unit $e$, and let $J$ be an order ideal of $V$.  If we define 
\begin{align*}
N_{J_\R} := \{ v \in V_h : \text{for all } &r > 0 \text{ there exists } j,k \in J_\R  \text{ such that } \\
& j + re + v \in V^+ \text{ and } k + re - v \in V^+ \},
\end{align*}
and set $N_J := N_{J_\R} \oplus i N_{J_\R}$, then $N_J$ is an order ideal of $V$.  Furthermore, if we let 
\begin{align*}
(V/N_J)^+ := \{ v + N_J : \text{for all } &r > 0 \text{ there exists } j \in J \\
& \text{ such that } j + re + v \in V^+  \}
\end{align*}
then $(V/N_J, (V/N_J)^+ )$ is an ordered vector space with Archimedean order unit $e + N_J$, and the Archimedean quotient of $V$ by $J$ is unitally order isomorphic to $(V/N_J, (V/N_J)^+ )$.
\end{proposition}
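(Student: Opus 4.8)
The plan is to reduce the entire statement to the real Proposition~\ref{Arch-one-quotient-prop}, applied to the ordered real vector space $(V_h, V^+)$ with the order ideal $J_\R := J \cap V_h$, exploiting the decomposition $N_J = N_{J_\R} \oplus i N_{J_\R}$ in exactly the spirit in which $N = N_\R \oplus i N_\R$ was used in \S\ref{Arch-Complex-Vec-Spaces}. The first observation is that $N_{J_\R}$ is precisely the set that Proposition~\ref{Arch-one-quotient-prop} calls ``$N_J$'' when that proposition is applied to $(V_h, V^+)$ and $J_\R$. Hence $N_{J_\R}$ is an order ideal of $V_h$, and $(V_h/N_{J_\R}, (V_h/N_{J_\R})^+)$ is an ordered real vector space with Archimedean order unit $e + N_{J_\R}$ that is unitally order isomorphic to the (real) Archimedean quotient of $V_h$ by $J_\R$.

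First I would check that $N_J$ is a self-adjoint order ideal. Self-adjointness is immediate from $N_J = N_{J_\R} \oplus i N_{J_\R}$ together with $N_{J_\R} \subseteq V_h$, and that $N_J$ is a complex subspace is clear. Because the order-ideal condition involves only elements of $V^+ \subseteq V_h$, if $p \in N_J \cap V^+$ then in fact $p \in N_{J_\R}$, and for $0 \le q \le p$ the membership $q \in N_{J_\R} \subseteq N_J$ follows from the fact that $N_{J_\R}$ is an order ideal of $V_h$.

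The main technical step, and the one I expect to be the chief obstacle, is to identify the complex positive cone $(V/N_J)^+$ with the real cone $(V_h/N_{J_\R})^+$ under the correspondence $(V/N_J)_h \cong V_h/N_{J_\R}$ given by $v + N_J \mapsto v + N_{J_\R}$ for $v \in V_h$. The subtlety is that $(V/N_J)^+$ is defined using an arbitrary $j \in J$ rather than $j \in J_\R$. To handle this I would use a self-adjointness averaging argument: if $j + re + v \in V^+$ with $v \in V_h$, then taking adjoints gives $j^* + re + v \in V^+$, so since $V^+$ is a cone we obtain $\tfrac{1}{2}(j + j^*) + re + v \in V^+$ with $\tfrac{1}{2}(j + j^*) \in J_\R$. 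The same averaging shows that whenever $v + N_J \in (V/N_J)^+$ the class $v + N_J$ is automatically hermitian (the condition $j + re + v \in V^+ \subseteq V_h$ forces $\operatorname{Im}(v)$ into $N_{J_\R}$), so the cone genuinely lives in $(V/N_J)_h$ and matches $(V_h/N_{J_\R})^+$. Granting this identification, the assertions that $(V/N_J, (V/N_J)^+)$ is an ordered $*$-vector space with Archimedean order unit $e + N_J$ follow from the corresponding real statement for $(V_h/N_{J_\R}, (V_h/N_{J_\R})^+)$ together with the complexification machinery of \S\ref{Arch-Complex-Vec-Spaces} and Theorem~\ref{V/N-is-Arch-prop}.

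Finally, for the claimed unital order isomorphism with the Archimedean quotient of $V$ by $J$, I would again descend to hermitian parts. By the remark preceding the definition of the complex Archimedeanization, the hermitian part of $(V/J)_\textnormal{Arch}$ is order isomorphic to the real Archimedeanization of $(V/J)_h \cong V_h/J_\R$, and Proposition~\ref{Arch-one-quotient-prop} supplies the real order isomorphism between that real Archimedean quotient and $(V_h/N_{J_\R}, (V_h/N_{J_\R})^+)$. Complexifying this map---equivalently, defining $\psi$ by $\psi((v+J)+N) = v + N_J$ and verifying it is well defined, unital, bijective, and order preserving exactly as in the real proof---yields the desired unital order isomorphism of ordered $*$-vector spaces. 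The only work beyond this bookkeeping is confirming that the $*$-structures are respected, which is automatic because every map involved is the complexification of a map on hermitian parts.
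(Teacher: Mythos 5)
Your proposal is correct and takes essentially the same route as the paper: both reduce everything to the real Proposition~\ref{Arch-one-quotient-prop} via the decomposition $N_J = N_{J_\R} \oplus i N_{J_\R}$ and the unital order isomorphism $(V/N_J)_h \cong V_h/N_{J_\R}$, and both obtain the final identification with the Archimedean quotient by (the complexification of) the map $\psi\bigl((v+J)+N\bigr) = v + N_J$. In fact, your averaging argument---showing that a class in $(V/N_J)^+$ witnessed by some $j \in J$ is automatically hermitian with the witness replaceable by $\operatorname{Re}(j) \in J_\R$---makes explicit a detail that the paper's proof glosses over when it simply asserts the isomorphism of the two cones.
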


\begin{proof}
It follows from Proposition~\ref{Arch-one-quotient-prop} that $N_{J_\R}$ is a real order ideal in $V_h$, and thus $N_J := N_{J_\R} \oplus i N_{J_\R}$ is an order ideal of $V$.

Furthermore, we see that $((V/N_J)_h, (V/N_J)^+ )$ is unitally order isomorphic to $(V_h / N_{J_\R}, (V_h / N_{J_\R})^+)$ via the map $v + N_J \mapsto v + N_{J_\R}$.  Since Proposition~\ref{Arch-one-quotient-prop} shows that $(V_h / N_{J_\R}, (V_h / N_{J_\R})^+)$ is an ordered vector space with Archimedean order unit $e + N_{J_\R}$, it follows that $(V/N_J, (V/N_J)^+ )$ is an ordered $*$-vector space with Archimedean order unit $e + N_J$.

Finally, let $D := \{ v + J \in (V/J)_h : (re + v)+ J \in V^+ + J \text{ for all $r >0$}\}$, let $N_\R := D \cap -D$, and let $N := N_\R \oplus i N_\R$.  Then, by definition, the Archimedean quotient of $V$ by $J$ is the ordered vector space $( (V/J)/N, D + N)$ with Archimedean order unit $(e+J) + N$.  If we define $\psi : (V/J)/N \to V / N_J$ by $\psi ((v+J) +N) = v + N_J$, then an argument as in the proof of Proposition~\ref{Arch-one-quotient-prop} shows that $\psi$ is an order isomorphism.
\end{proof}

We also have the following complex version of the ``First Isomorphism Theorem" for positive linear maps.  The proof is similar to that of Theorem~\ref{first-iso-real-thm} and therefore we omit it.

\begin{theorem} \label{first-iso-complex-thm}
Let $(V,V^+)$ be an ordered $*$-vector space with Archimedean order unit $e$, and let $(W,W^+)$ be an ordered $*$-vector space with Archimedean order unit $e'$.  If $\phi : V \to W$ is a unital positive linear map, then $\ker \phi$ is an order ideal and the Archimedean quotient of $V$ by $\ker \phi$ is unitally order isomorphic to $V / \ker \phi$ with positive cone 
$$(V / \ker \phi)^+ = \{ v + \ker \phi : \text{$\forall$ $r >0$ $\exists$ $j \in \ker \phi$ such that $j + re + v \in V^+$ } \} $$ and Archimedean order unit $e + \ker \phi$.  In addition, the map $\widetilde{\phi} : V / \ker \phi \to W$ given by $\widetilde{\phi} (v + \ker \phi) = \phi (v)$ is a unital positive linear map.  Moreover, if $\phi (V^+) = W^+ \cap \im \phi$, then $\widetilde{\phi}$ is an order isomorphism from $V / \ker \phi$ onto $\im \phi$.
\end{theorem}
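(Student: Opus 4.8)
The plan is to mirror the proof of Theorem~\ref{first-iso-real-thm}, reducing every order-theoretic statement to the hermitian parts and then invoking the complex machinery of Proposition~\ref{complex-Arch-one-quotient-prop}. First I would record that $\ker \phi$ is a self-adjoint order ideal. Self-adjointness follows from the earlier fact that a positive linear map satisfies $\phi(v^*) = \phi(v)^*$, so $\phi(v) = 0$ if and only if $\phi(v^*) = 0$; the order-ideal condition is immediate, since if $p \in \ker \phi \cap V^+$ and $0 \le q \le p$, then positivity of $\phi$ gives $0 \le \phi(q) \le \phi(p) = 0$, whence $\phi(q) = 0$. Writing $J := \ker \phi$, this justifies applying Proposition~\ref{complex-Arch-one-quotient-prop}, which identifies the Archimedean quotient of $V$ by $J$ with $(V/N_J, (V/N_J)^+)$, where $N_J = N_{J_\R} \oplus i N_{J_\R}$.

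The crux is the identity $N_J = \ker \phi$, which I would establish by showing $N_{J_\R} = J_\R$ for $J_\R = \ker \phi \cap V_h$. The inclusion $J_\R \subseteq N_{J_\R}$ is trivial: for $v \in J_\R$ take $j = -v$ and $k = v$, so that $j + re + v = re \in V^+$ and $k + re - v = re \in V^+$. For the reverse inclusion, given $v \in N_{J_\R} \subseteq V_h$, for each $r > 0$ pick $j, k \in J_\R$ with $j + re + v \in V^+$ and $k + re - v \in V^+$; applying the unital positive map $\phi$ and using $\phi(j) = \phi(k) = 0$ yields $re' + \phi(v) \in W^+$ and $re' - \phi(v) \in W^+$ for all $r > 0$, with $\phi(v) \in W_h$ since $v \in V_h$. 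Because $e'$ is Archimedean, both $\phi(v) \ge 0$ and $-\phi(v) \ge 0$, so $\phi(v) = 0$ and $v \in J_\R$. Self-adjointness of $\ker \phi$ then upgrades $N_{J_\R} = J_\R$ to $N_J = \ker \phi$, so the Archimedean quotient is exactly $V/\ker\phi$ with the stated cone and Archimedean order unit $e + \ker \phi$.

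It remains to treat $\widetilde{\phi}$. Its well-definedness, linearity, and unitality are automatic once $N_J = \ker \phi$. For positivity, if $v + \ker \phi \in (V/\ker\phi)^+$, then for each $r > 0$ there is $j \in \ker \phi$ with $j + re + v \in V^+$; applying $\phi$ gives $re' + \phi(v) \in W^+$ for all $r > 0$, and the Archimedean property of $e'$ forces $\phi(v) \in W^+$. Finally, under the hypothesis $\phi(V^+) = W^+ \cap \im \phi$, I would prove the converse: if $\widetilde{\phi}(v + \ker \phi) = \phi(v) \in W^+$, then $\phi(v) \in W^+ \cap \im \phi = \phi(V^+)$, so $\phi(v) = \phi(p)$ for some $p \in V^+$; setting $j := p - v \in \ker \phi$, we get $j + re + v = p + re \in V^+$ for every $r > 0$, whence $v + \ker \phi \in (V/\ker\phi)^+$. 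Thus $\widetilde{\phi}$ is an order isomorphism onto $\im \phi$.

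The main obstacle is not any single hard estimate but the bookkeeping forced by the complex structure: the order and the Archimedean condition only live on the hermitian elements, so the Archimedean arguments must be carried out on $V_h$ and $W_h$, and one must use that $\ker \phi$ is self-adjoint to pass from the real identity $N_{J_\R} = J_\R$ to the complex identity $N_J = \ker \phi$. Once this reduction is in place, each verification is a direct transcription of the corresponding step in the proof of Theorem~\ref{first-iso-real-thm}.
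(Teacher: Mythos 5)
Your proposal is correct and takes exactly the approach the paper intends: the paper omits this proof, remarking only that it is similar to that of Theorem~\ref{first-iso-real-thm}, and your argument is precisely that transcription---noting self-adjointness of $\ker\phi$ (so that it is an order ideal in the $*$-vector space sense), proving $N_{J_\R} = J_\R$ via the Archimedean property of $e'$ on hermitian parts, and then invoking Proposition~\ref{complex-Arch-one-quotient-prop} to identify the Archimedean quotient with $V/\ker\phi$. The remaining verifications (positivity of $\widetilde{\phi}$ and the order-isomorphism claim under $\phi(V^+) = W^+ \cap \im\phi$) match the real case step for step, so there is no gap.
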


\section{Seminorms on ordered $*$-vector spaces} \label{metric-from-order-unit}

In \S \ref{seminorm-from-order-unit} we defined the order seminorm on an ordered real vector space containing an order unit, and in Theorem~\ref{norm-char} we characterized the order seminorm as the unique seminorm satisfying certain conditions.  If $(V,V^+)$ is an ordered $*$-vector space with order unit $e$, the real subspace $V_h$ is endowed with the order seminorm.  In this section, we are interested in extending the order seminorm on $V_h$ to a seminorm on $V$ that preserves the $*$-operation.  We will show that, in general, there can be many such seminorms on $V$ that do this, and that among all such seminorms there is a minimal one and a maximal one.

\begin{definition}
If $V$ is a $*$-vector space, a seminorm (respectively, norm) $\| \cdot \|$ on $V$ is called a \emph{$*$-seminorm} (respectively, \emph{$*$-norm}) if $\| v^* \| = \| v \|$ for all $v \in V$.
\end{definition}

\begin{lemma} \label{real-im-smaller}
Let $V$ be a $*$-vector space and let $\| \cdot \|$ be a $*$-seminorm on $V$.  Then for any $v \in V$ we have that $\| \operatorname{Re} (v) \| \leq \| v \|$ and $\| \operatorname{Im}(v) \| \leq \| v \|$.
\end{lemma}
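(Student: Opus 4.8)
The plan is to write the real and imaginary parts of $v$ explicitly in terms of $v$ and $v^*$ via the formulas $\operatorname{Re}(v) = (v+v^*)/2$ and $\operatorname{Im}(v) = (v-v^*)/(2i)$, and then apply the triangle inequality together with absolute homogeneity of the seminorm and the defining property $\|v^*\| = \|v\|$ of a $*$-seminorm. No deeper structure of $V$ or of the order is needed; this is purely a consequence of the seminorm axioms.

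First I would handle the real part. By the triangle inequality and homogeneity of $\|\cdot\|$,
\[
\|\operatorname{Re}(v)\| = \left\| \frac{v + v^*}{2} \right\| \leq \frac{1}{2}\left( \|v\| + \|v^*\| \right),
\]
and since $\|\cdot\|$ is a $*$-seminorm we have $\|v^*\| = \|v\|$, so the right-hand side is exactly $\|v\|$. This yields $\|\operatorname{Re}(v)\| \leq \|v\|$.

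Next I would treat the imaginary part in the same way, the only extra observation being that $|2i| = 2$, so absolute homogeneity gives $\|(v-v^*)/(2i)\| = \tfrac{1}{2}\|v-v^*\|$. Then
\[
\|\operatorname{Im}(v)\| = \frac{1}{2}\|v - v^*\| \leq \frac{1}{2}\left( \|v\| + \|v^*\| \right) = \|v\|,
\]
again using $\|v^*\| = \|v\|$.

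The step most worth flagging is the use of homogeneity with the complex scalar $1/(2i)$ for the imaginary part: this is legitimate precisely because $\|\cdot\|$ is a seminorm on a \emph{complex} vector space, so $\|\lambda w\| = |\lambda|\,\|w\|$ for all $\lambda \in \C$. Beyond this bookkeeping there is no real obstacle; the inequalities follow immediately from the triangle inequality, homogeneity, and the $*$-seminorm condition.
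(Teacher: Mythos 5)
Your proof is correct and is essentially the same argument as the paper's: the paper writes $\operatorname{Re}(v) = \tfrac{1}{2}\bigl[(\operatorname{Re}(v)+i\operatorname{Im}(v)) + (\operatorname{Re}(v)-i\operatorname{Im}(v))\bigr]$, which is exactly your identity $\operatorname{Re}(v) = (v+v^*)/2$, and then applies the triangle inequality, the $*$-seminorm property $\|v^*\|=\|v\|$, and complex homogeneity just as you do (and similarly for $\operatorname{Im}(v)$).
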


\begin{proof}
We see that $\| \operatorname{Re}(v) \| = \frac{1}{2} \| \operatorname{Re}(v) + i \operatorname{Im}(v) +  \operatorname{Re}(v) - i \operatorname{Im}(v) \| \leq \frac{1}{2} \| \operatorname{Re}(v) + i \operatorname{Im}(v) \| + \frac{1}{2} \| \operatorname{Re}(v) - i \operatorname{Im}(v) \| = \frac{1}{2} \| v \| + \frac{1}{2} \| v^* \| = \| v \|$.  Similarly, $\| \operatorname{Im}(v) \| = \frac{1}{2} \| \operatorname{Re}(v) + i \operatorname{Im}(v) -  \operatorname{Re}(v) + i \operatorname{Im}(v) \| \leq \frac{1}{2} \| \operatorname{Re}(v) + i \operatorname{Im}(v) \| + \frac{1}{2} \| \operatorname{Re}(v) - i \operatorname{Im}(v) \| = \frac{1}{2} \| v \| + \frac{1}{2} \| v^* \| = \| v \|$. 
\end{proof}

\begin{definition} \label{order-norms-def}
 Let $(V,V^+)$ be an ordered $*$-vector space with order unit $e$, and let $\| \cdot \|$ be the order seminorm on $V_h$.  An \emph{order seminorm on $V$} is a $*$-seminorm $||| \cdot |||$ on $V$ with the property that $||| v ||| = \| v \|$ for all $v \in V_h$.
\end{definition}

In general, there are many order seminorms on an ordered $*$-vector space.  In this section we will examine three particular order seminorms (viz., the minimal, the maximal, and the decomposition seminorms) that play an important role in the analysis of ordered $*$-vector spaces.

\subsection{The minimal order seminorm $\| \cdot \|_m$} \label{minimal-seminorm-subsec}

\begin{definition} \label{min-defn}
Let $(V, V^+)$ be an ordered $*$-vector space with order
unit $e$.  We define the \emph{minimal order seminorm} $\| \cdot \|_m : V \to [0,\infty)$ by $$\|v\|_m := \sup \{ |f(v)|: f:V \to \C \text{ is a state} \}.$$
\end{definition}

The following result justifies the terminology in the above definition.

\begin{theorem} \label{min-norm}  Let $(V,V^+)$ be an ordered $*$-vector space with order unit $e$, and let $\| \cdot \|$ denote the order seminorm on $V_h$. The following are true:
\begin{enumerate}
\item $\| \cdot \|_m$ is a $*$-seminorm on the complex $*$-vector space $V$,
\item $\|v\|_m=\|v\|$ for all $v \in V_h$, and
\item if $|||\cdot |||$ is any other $*$-seminorm on the complex vector space $V$ such that $|||v||| = \|v\|$ for every $v \in V_h,$ then $\|v\|_m \leq |||v|||$ for every $v \in V$.
\end{enumerate}
\end{theorem}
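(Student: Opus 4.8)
The plan is to establish the three properties of $\|\cdot\|_m$ by exploiting the fact that states on $V$ are exactly the complexifications $\widetilde{g}$ of states on $V_h$, as provided by Proposition~\ref{complexification-linear-positive} and Proposition~\ref{pos-iff-restrict-pos}. First I would verify property~(1), that $\|\cdot\|_m$ is a $*$-seminorm. The supremum defining $\|v\|_m$ is finite because each state $f$ satisfies $|f(v)| \leq |f(\operatorname{Re}(v))| + |f(\operatorname{Im}(v))|$, and by Corollary~\ref{pos-preserves-star} together with Proposition~\ref{positive-R-functionals-cts} applied to $f|_{V_h}$, each term is bounded by $\|\operatorname{Re}(v)\|$ and $\|\operatorname{Im}(v)\|$ respectively; so the supremum is dominated by a fixed constant. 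Homogeneity $\|\lambda v\|_m = |\lambda|\,\|v\|_m$ follows immediately from $|f(\lambda v)| = |\lambda|\,|f(v)|$, and the triangle inequality follows exactly as in the proof of Proposition~\ref{seminorm}: for each state $|f(v+w)| \leq |f(v)| + |f(w)| \leq \|v\|_m + \|w\|_m$, and then take the supremum. The $*$-property is immediate from Corollary~\ref{pos-preserves-star}, since $|f(v^*)| = |\overline{f(v)}| = |f(v)|$ for every state $f$.

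Next I would establish property~(2), that $\|v\|_m = \|v\|$ for $v \in V_h$. By Proposition~\ref{pos-iff-restrict-pos} every state $f$ on $V$ restricts to a state $g = f|_{V_h}$ on $V_h$, and conversely every state $g$ on $V_h$ extends via $\widetilde{g}$ to a state on $V$ with $\widetilde{g}|_{V_h} = g$ (Proposition~\ref{complexification-linear-positive}). Hence for $v \in V_h$ the set $\{|f(v)| : f \text{ a state on } V\}$ coincides with $\{|g(v)| : g \text{ a state on } V_h\}$, and Proposition~\ref{seminorm} identifies the supremum of the latter as $\|v\|$. This gives $\|v\|_m = \|v\|$ on $V_h$.

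Finally, for property~(3) I would take an arbitrary $*$-seminorm $|||\cdot|||$ agreeing with $\|\cdot\|$ on $V_h$ and show $\|v\|_m \leq |||v|||$ for all $v \in V$. Fix $v \in V$ and a state $f$ on $V$; the goal is $|f(v)| \leq |||v|||$. Choose a unimodular scalar $\lambda$ with $\lambda f(v) = |f(v)| \geq 0$, so that $f(\lambda v) = |f(v)|$ is real and equals $f(\operatorname{Re}(\lambda v))$ since $f$ preserves the $*$-operation (Corollary~\ref{pos-preserves-star}). Then using property~(2), Proposition~\ref{seminorm} (condition~(3) of Theorem~\ref{norm-char}), Lemma~\ref{real-im-smaller}, and $|\lambda|=1$, I obtain
\begin{equation*}
|f(v)| = f(\operatorname{Re}(\lambda v)) \leq \|\operatorname{Re}(\lambda v)\| = |||\operatorname{Re}(\lambda v)||| \leq |||\lambda v||| = |||v|||.
\end{equation*}
Taking the supremum over all states $f$ yields $\|v\|_m \leq |||v|||$. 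The main obstacle is the last step: one must handle the complex phase of $f(v)$ correctly so as to reduce the estimate to the real part, where the agreement of $|||\cdot|||$ with the order seminorm and the domination $\|\operatorname{Re}(w)\| = |||\operatorname{Re}(w)||| \leq |||w|||$ from Lemma~\ref{real-im-smaller} can be invoked; the rotation by $\lambda$ is exactly what makes this reduction valid.
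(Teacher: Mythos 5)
Your proof is correct and takes essentially the same approach as the paper's: parts (1) and (2) use the restriction/complexification correspondence between states on $V$ and states on $V_h$ exactly as the paper does, and in part (3) your unimodular scalar $\lambda$ is precisely the paper's $e^{i\theta}$, reducing $|f(v)|$ to $f(\operatorname{Re}(\lambda v))$ in the same way. The only cosmetic difference is that you invoke Lemma~\ref{real-im-smaller} for the step $|||\operatorname{Re}(\lambda v)||| \leq |||\lambda v|||$, whereas the paper derives that inequality inline from the triangle inequality and the $*$-seminorm property.
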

\begin{proof}
We first note that for any $\lambda \in \C$ and $v \in V$, we have that $\| \lambda v \|_m = \sup \{ |f(\lambda v)|: f \text{ is a state} \} = \sup \{ |\lambda||f(v)|: f \text{ is a state} \} = |\lambda| \ \|v\|_m.$  In addition, given $v,w \in V,$ we have that 
\begin{align*}
\|v+w\|_m &= \sup \{|f(v+w)|: f \text{ is a state} \} \\
&\le \sup \{|f(v)| + |f(w)|: f \text{ is a state} \} \le \|v\|_m + \|w\|_m.
\end{align*}
Thus $\| \cdot \|_m$ is a complex seminorm.  In addition, Corollary~\ref{pos-preserves-star} shows that for any $v \in V$ we have $\| v^* \|_m = \sup \{ |f(v^*)|: f:V \to \C \text{ is a state} \} =  \sup \{ |\overline{f(v)}|: f:V \to \C \text{ is a state} \} =  \sup \{ |f(v)|: f:V \to \C \text{ is a state} \} = \| v \|_m$.  Hence $\| \cdot \|_m$ is a $*$-seminorm.

If $v \in V_h,$ then by Proposition~\ref{seminorm} and Proposition~\ref{pos-iff-restrict-pos} we see that
\begin{align*}
\|v\|_m &= \sup \{ |f(v)|: f:V \to \C \text{ is a state} \} \\
&= \sup \{ |\widetilde{g}(v)|:  g:V_h \to \R \text{ is a state} \} \\
&= \sup \{ |g(v)|:  g:V_h \to \R \text{ is a state} \} \\
&= \|v\|.
\end{align*}

Finally, assume that $||| \cdot |||$ is a $*$-seminorm satisfying $|||v|||= \|v\|$ for every $v \in V_h.$  Let $v \in V$ and let $f:V \to \C$ be a state.  Choose $\theta \in \R,$ such that $|f(v)| = e^{i \theta} f(v) = f(e^{i \theta}v).$  Let $w=e^{i \theta}v$.  Since $f$ is positive we have $f(\operatorname{Re}(w)), f(\operatorname{Im}(w)) \in \R$.  Also, because $f(w) = | f(v)| \in [0,\infty)$, we have that $f(w) = f(\operatorname{Re}(w)) + i f(\operatorname{Im}(w))$, and it follows that $f(\operatorname{Im}(w)) = 0$ and $f(\operatorname{Re}(w)) = f(w)$.  Hence $|f(v)|= f(w) = f(\operatorname{Re}(w)) \leq \|\operatorname{Re}(w)\| = |||\frac{w+ w^*}{2}||| \leq (1/2) ( ||| w ||| + ||| w^* |||) \leq |||w|||= |||v|||$. Taking the supremum over all states yields $\|v\|_m \le |||v|||$.
\end{proof}

\subsection{The maximal order seminorm $\| \cdot \|_M$} \label{maximal-seminorm-subsec}

\begin{definition} Let $(V,V^+)$ be an ordered $*$-vector space with order unit $e$, and let $\| \cdot \|$ denote the order seminorm on $V_h$.  We define the \emph{maximal order seminorm} $\| \cdot \|_M:V \to [0,\infty)$ by
$$\|v\|_M = \inf \left\{ \sum_{i=1}^n |\lambda_i| \, \|v_i\| \ : \ v = \sum_{i=1}^n \lambda_i v_i \text{ with } v_i \in V_h \text{ and } \lambda_i \in \C \right\}.$$
(Note that the above set is nonempty since we have $v = \operatorname{Re} (v) + i \operatorname{Im}(v)$ for any $v \in V$.)
\end{definition}

The following result justifies the terminology in the above definition.

\begin{theorem} \label{max-norm}  Let $(V,V^+)$ be an ordered $*$-vector space with order unit $e$. The following are true:
\begin{enumerate}
\item $\| \cdot \|_M$ is a $*$-seminorm on the complex $*$-vector space $V$,
\item $\|v\|_M=\|v\|$ for all $v \in V_h$, and 
\item if $|||\cdot |||$ is any other $*$-seminorm on the complex vector space $V$ such that $|||v||| = \|v\|_M$ for every $v \in V_h$, then $|||v||| \le \|v\|_M$ for every $v \in V$.
\end{enumerate}
\end{theorem}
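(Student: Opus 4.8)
The plan is to verify the three assertions in turn, with essentially all of the genuine work concentrated in the reverse inequality of part~(2); parts~(1) and~(3) will follow formally from the infimum-of-decompositions shape of the definition.

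For part~(1), I would check the seminorm axioms directly. For subadditivity, given $v,w \in V$ and $\epsilon > 0$, I choose decompositions $v = \sum_i \lambda_i v_i$ and $w = \sum_j \mu_j w_j$ into hermitian pieces whose associated sums are within $\epsilon$ of $\|v\|_M$ and $\|w\|_M$; concatenating them gives an admissible decomposition of $v+w$, so $\|v+w\|_M \le \|v\|_M + \|w\|_M + 2\epsilon$, and letting $\epsilon \to 0$ finishes. Homogeneity $\|\lambda v\|_M = |\lambda|\,\|v\|_M$ comes from multiplying every coefficient $\lambda_i$ by $\lambda$ (one inequality directly, the reverse by applying it to $\lambda^{-1}$ and $\lambda v$ when $\lambda \ne 0$, and trivially when $\lambda = 0$); finiteness is automatic because $v = \operatorname{Re}(v) + i\operatorname{Im}(v)$ is always an admissible decomposition. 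For the $*$-property, applying $*$ to a decomposition $v = \sum_i \lambda_i v_i$ yields $v^* = \sum_i \overline{\lambda_i} v_i$ (using $v_i^* = v_i$), an admissible decomposition of $v^*$ with the same sum; hence $\|v^*\|_M \le \|v\|_M$, and symmetry gives equality.

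For part~(2), the inequality $\|v\|_M \le \|v\|$ for $v \in V_h$ is immediate from the trivial decomposition $v = 1 \cdot v$. The reverse inequality is the crux. Given any admissible decomposition $v = \sum_i \lambda_i v_i$ of a hermitian $v$, I would pass to real parts: since $v = v^*$ and each $v_i$ is hermitian, $v = \operatorname{Re}(v) = \sum_i \operatorname{Re}(\lambda_i)\, v_i$, a decomposition whose coefficients $\operatorname{Re}(\lambda_i)$ are now all real. Invoking that the order seminorm on $V_h$ is a genuine seminorm (Proposition~\ref{seminorm}), together with $|\operatorname{Re}(\lambda_i)| \le |\lambda_i|$, gives $\|v\| \le \sum_i |\operatorname{Re}(\lambda_i)|\,\|v_i\| \le \sum_i |\lambda_i|\,\|v_i\|$. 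Taking the infimum over all decompositions yields $\|v\| \le \|v\|_M$, completing part~(2).

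Part~(3) is then purely formal. Let $|||\cdot|||$ be a $*$-seminorm with $|||v||| = \|v\|_M$ for all $v \in V_h$; by part~(2) this means $|||v||| = \|v\|$ on $V_h$. For arbitrary $v \in V$ and any decomposition $v = \sum_i \lambda_i v_i$ into hermitian pieces, the triangle inequality and homogeneity of $|||\cdot|||$ give $|||v||| \le \sum_i |\lambda_i|\,|||v_i||| = \sum_i |\lambda_i|\,\|v_i\|$, and taking the infimum over decompositions produces $|||v||| \le \|v\|_M$. The only place where the $*$-structure is truly used is the real-part computation in the reverse inequality of part~(2); everything else is a direct consequence of the defining infimum, so I expect that single step to be the sole point requiring care.
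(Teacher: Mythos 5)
Your proposal is correct and follows essentially the same route as the paper's proof: concatenation of decompositions and coefficient-scaling for the seminorm axioms, conjugating a decomposition for the $*$-property, the trivial decomposition $v = 1\cdot v$ for $\|v\|_M \le \|v\|$, the real-part trick $v = \sum_i \operatorname{Re}(\lambda_i)v_i$ for the reverse inequality on $V_h$, and the formal triangle-inequality argument for maximality. Your closing observation that the $*$-structure is only genuinely used in the real-part step of part~(2) is accurate for the paper's proof as well.
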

\begin{proof} We first show that $\| \cdot \|_M$ is a $*$-seminorm on $V$.  To begin, the fact that $\| \lambda v\|_M = |\lambda| \|v\|_M$ follows directly from the definition and the fact that $v = \sum_i \lambda_i v_ i$ if and only if $\lambda v =  \sum_i (\lambda \lambda_i) v_ i$.  In addition, if $v, w \in V$ and if $v= \sum_i \lambda_i v_i$ and $w= \sum_j \mu_j w_j,$ where both sums are finite with $\lambda_i, \mu_j \in \C$ and $v_i, w_j \in V_h$ for every $i,j$, then $v+w = \sum_i \lambda_i v_i + \sum_j \mu_j w_j$ is among the ways (but not necessarily every way) to express $v+w$ as an appropriate type of sum.  From this it follows that
\begin{align*} 
&\|v+w\|_M \\
\leq &\inf \left\{ \sum_i |\lambda_i| \|v_i\| + \sum_j |\mu_j|\|w_j\| :
v= \sum_i \lambda_i v_i \text{ and } w= \sum_j \mu_j w_j \right\} \\
= &\|v\|_M + \|w\|_M.
\end{align*}
Thus $\|\cdot \|_M$ is a complex seminorm on $V$.  Furthermore, if $v \in V$, then $v = \sum_i \lambda_i v_i$ with each $v_i \in V_h$ if and only if $v^* = \sum_i \overline{\lambda}_i v_i$.  Because $| \overline{\lambda_i}| = | \lambda_i |$, it follows from the definition of the maximal order norm that $\| v \|_M = \| v^* \|_M$.  Hence $\| \cdot \|_M$ is a $*$-seminorm.

To see the second property, let $v \in V_h$ and write $v= \sum_i \lambda_i v_i$ with $\lambda_i \in \C$ and $v_i \in V_h$ for every $i.$ We have that $v= v^* = \sum_i \overline{\lambda_i} v_i,$ and hence,
$v= \sum_i \operatorname{Re}(\lambda_i) v_i.$  Thus, using that $\| \cdot \|$ is a real seminorm, we have $\|v\| \leq \sum_i |\operatorname{Re}(\lambda_i)| \|v_i\| \le \sum_i |\lambda_i| \|v_i\|,$ and taking the infimum over all such sums yields $\|v\| \le \|v\|_M.$ The inequality $\|v\|_M \le \|v\|$ follows by taking the trivial sum $v=1v.$  Hence $\| v \|_M = \|v \|$ for all $v \in V_h$.

Finally, assume that $||| \cdot |||$ is a $*$-seminorm satisfying $|||v|||= \|v\|$ for every $v \in V_h$.  Let $v \in V$.  Given any expression of $v= \sum_i \lambda_i v_i$ with $\lambda_i \in \C$ and $v_i \in V_h$ for all $i$, we have that $|||v||| \leq \sum_i |\lambda_i| \, |||v_i||| = \sum_i |\lambda_i| \, \|v_i\|$, which after taking the infimum of the right-hand side yields $|||v||| \le \|v\|_M.$
\end{proof}

\begin{remark} \label{min-max-order-norms}
Theorem~\ref{min-norm} and Theorem~\ref{max-norm} show that $\| \cdot \|_m$ and $\| \cdot \|_M$ are order seminorms on $V$, and if $||| \cdot |||$ is any order seminorm on $V$, then $\| v \|_m \leq ||| v ||| \leq \| v \|_M$ for all $v \in V$.
\end{remark}

\begin{proposition} \label{all-order-norms-equiv}
Let $(V,V^+)$ be an ordered $*$-vector space with order unit $e$.  Then any two order seminorms on $V$ are equivalent. Moreover, if $||| \cdot |||$ is any order seminorm on $V$, then $\{v \in V: |||v|||=0 \} = \bigcap_{ f : V \to \C \atop \text{is a state}} \ker f$.
\end{proposition}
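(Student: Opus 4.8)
The plan is to deduce everything from Remark~\ref{min-max-order-norms}, which already tells us that for \emph{any} order seminorm $||| \cdot |||$ on $V$ we have $\|v\|_m \leq |||v||| \leq \|v\|_M$ for all $v \in V$. Since every order seminorm is squeezed between $\| \cdot \|_m$ and $\| \cdot \|_M$, it suffices to prove that these two distinguished seminorms are boundedly equivalent: once we know $\|v\|_m \leq \|v\|_M \leq C\|v\|_m$ for some constant $C$, each order seminorm is equivalent to $\| \cdot \|_m$, and hence any two order seminorms are equivalent to each other.

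The only inequality that requires work is the upper bound $\|v\|_M \leq 2\|v\|_m$; the reverse inequality $\|v\|_m \leq \|v\|_M$ is given by Remark~\ref{min-max-order-norms}. To obtain it, I would fix $v \in V$ and use the particular decomposition $v = \operatorname{Re}(v) + i\operatorname{Im}(v)$ with $\operatorname{Re}(v), \operatorname{Im}(v) \in V_h$. Applying the definition of $\| \cdot \|_M$ to this single representation (whose scalar coefficients $1$ and $i$ both have modulus one) yields $\|v\|_M \leq \|\operatorname{Re}(v)\| + \|\operatorname{Im}(v)\|$, where $\| \cdot \|$ is the order seminorm on $V_h$. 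Now I invoke the fact that $\| \cdot \|_m$ is a $*$-seminorm on $V$ (Theorem~\ref{min-norm}(1)) and that it restricts to the order seminorm on $V_h$ (Theorem~\ref{min-norm}(2)): by Lemma~\ref{real-im-smaller} we have $\|\operatorname{Re}(v)\| = \|\operatorname{Re}(v)\|_m \leq \|v\|_m$ and $\|\operatorname{Im}(v)\| = \|\operatorname{Im}(v)\|_m \leq \|v\|_m$. Combining these gives $\|v\|_M \leq 2\|v\|_m$, and therefore $\|v\|_m \leq \|v\|_M \leq 2\|v\|_m$ for all $v \in V$.

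For the \emph{moreover} statement, the bounded equivalence just established shows at once that for any order seminorm $||| \cdot |||$ one has $|||v||| = 0$ if and only if $\|v\|_m = 0$, since $||| \cdot |||$ lies between $\| \cdot \|_m$ and $2\| \cdot \|_m$. By Definition~\ref{min-defn}, $\|v\|_m = \sup\{|f(v)| : f : V \to \C \text{ is a state}\}$, so $\|v\|_m = 0$ holds precisely when $f(v) = 0$ for every state $f$, that is, when $v \in \bigcap_{f : V \to \C \text{ a state}} \ker f$. This identifies $\{v \in V : |||v||| = 0\}$ with the asserted intersection of kernels.

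There is no serious obstacle here; the entire argument rests on the one estimate $\|v\|_M \leq 2\|v\|_m$, and the only point that needs care is making sure the hypotheses of the cited results line up: specifically, that $\| \cdot \|_m$ being a genuine $*$-seminorm is what licenses the use of Lemma~\ref{real-im-smaller} to control the real and imaginary parts, and that its agreement with the order seminorm on $V_h$ is what connects the bound back to the defining representation used for $\| \cdot \|_M$. Everything else is a formal consequence of the sandwich inequality from Remark~\ref{min-max-order-norms}.
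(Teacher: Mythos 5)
Your proof is correct and follows essentially the same route as the paper's: both reduce to showing $\| \cdot \|_m$ and $\| \cdot \|_M$ are equivalent via the bound $\|v\|_M \leq \|\operatorname{Re}(v)\| + \|\operatorname{Im}(v)\| \leq 2\|v\|_m$, using the decomposition into real and imaginary parts together with Lemma~\ref{real-im-smaller} applied to the $*$-seminorm $\| \cdot \|_m$, and both obtain the kernel identification from the equivalence with $\| \cdot \|_m$ and its definition as a supremum over states.
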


\begin{proof}
In light of Remark~\ref{min-max-order-norms}, it suffices to show that the maximal order seminorm $\| \cdot \|_M$ and the minimal order seminorm $\| \cdot \|_m$ are equivalent.  Let $v \in V$.  By Theorem~\ref{min-norm} and Theorem~\ref{max-norm} we have that $\| v \|_m \leq \| v \|_M$.  In addition, if we write $v = \operatorname{Re}(v) + i \operatorname{Im}(v)$ and let $\| \cdot \|$ denote the order seminorm on $V_h$, then using Lemma~\ref{real-im-smaller} we have $\| v \|_M  \leq \| \operatorname{Re} (v) \| + | i | \| \operatorname{Im} (v) \| =  \| \operatorname{Re} (v) \|_m +  \| \operatorname{Im} (v) \|_m \leq \| v \|_m + \| v \|_m = 2 \| v \|_m$.  Hence $\| \cdot \|_m$ and $\| \cdot \|_M$ are equivalent seminorms.

To see the last statement, we note that since $||| \cdot |||$ and $\| \cdot \|_m$ are equivalent we have $$\{ v \in V: |||v|||=0 \} = \{ v \in V: \|v\|_m =0 \} = \bigcap_{ f : V \to \C \atop \text{is a state}} \ker f .$$ 
\end{proof}

\begin{remark}
Proposition~\ref{all-order-norms-equiv} shows that in the Archimedeanization of an ordered $*$-vector space (see \S\ref{Arch-Complex-Vec-Spaces}) the subspace $N := \bigcap_{ f : V \to \C \atop \text{is a state}} \ker f$ is equal to $\{ v \in V: |||v|||=0 \}$ for any order seminorm $||| \cdot |||$ on $V$.
\end{remark}

\begin{proposition}
Let $(V,V^+)$ be an ordered $*$-vector space with order unit $e$.  Then the following are equivalent:
\begin{enumerate}
\item The order seminorm on $V_h$ is a norm.
\item $\displaystyle \bigcap_{ f : V_h \to \R \atop \text{is a state}} \ker f = \{ 0 \}$.
\item $\displaystyle \bigcap_{ f : V \to \C \atop \text{is a state}} \ker f = \{ 0 \}$.
\item Some order seminorm on $V$ is a norm.
\item All order seminorms on $V$ are norms.
\end{enumerate}
\end{proposition}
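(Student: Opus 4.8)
The plan is to reduce every one of the five statements to a condition on the kernel of a seminorm, exploiting the elementary fact that a seminorm is a norm precisely when the only vector of seminorm zero is $0$. The identifications I need are all already available: the intersection $\bigcap_{g : V_h \to \R \text{ state}} \ker g$ equals $N_\R$ and $\bigcap_{f : V \to \C \text{ state}} \ker f$ equals $N$, together with the decomposition $N = N_\R \oplus i N_\R$ recorded in \S\ref{Arch-Complex-Vec-Spaces}, and the computation of the kernel of an arbitrary order seminorm from Proposition~\ref{all-order-norms-equiv}. Once these are in hand, the proof is bookkeeping.

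First I would establish $(1) \iff (2)$. By Proposition~\ref{seminorm}, for $v \in V_h$ we have $\|v\| = \sup \{ |g(v)| : g : V_h \to \R \text{ is a state} \}$, so $\|v\| = 0$ if and only if $g(v) = 0$ for every state $g$ on $V_h$, i.e.\ if and only if $v \in \bigcap_g \ker g = N_\R$. Hence the order seminorm on $V_h$ is a norm exactly when $N_\R = \{0\}$, which is condition $(2)$. Next, for $(2) \iff (3)$, I would invoke the decomposition $N = N_\R \oplus i N_\R$. Since $N_\R = N \cap V_h$, it is immediate that $N = \{0\}$ if and only if $N_\R = \{0\}$; combined with the descriptions of $N_\R$ and $N$ as the stated intersections of kernels, this gives the equivalence of $(2)$ and $(3)$.

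Finally, for $(3) \iff (4) \iff (5)$, I would appeal to Proposition~\ref{all-order-norms-equiv}, which asserts that every order seminorm $||| \cdot |||$ on $V$ satisfies $\{ v \in V : |||v||| = 0 \} = \bigcap_{f : V \to \C \text{ state}} \ker f = N$. Thus $||| \cdot |||$ is a norm if and only if $N = \{0\}$, and because this kernel is the \emph{same} subspace $N$ for every choice of order seminorm, the single condition $N = \{0\}$ is simultaneously equivalent to ``some order seminorm on $V$ is a norm'' and to ``all order seminorms on $V$ are norms.'' Since $N = \{0\}$ is condition $(3)$, this finishes the argument.

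I expect no genuine obstacle, as the proposition is essentially an assembly of previously established facts. The one point requiring mild care is $(2) \iff (3)$: one should pass through the decomposition $N = N_\R \oplus i N_\R$ to transfer triviality of the real kernel to triviality of the complex one, rather than attempting to relate states on $V$ and on $V_h$ from scratch. The other thing to keep in mind is purely expository, namely that Proposition~\ref{all-order-norms-equiv} has already computed the common kernel of all order seminorms, so $(4)$ and $(5)$ are not proved separately but both collapse at once to $N = \{0\}$.
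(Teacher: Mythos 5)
Your proof is correct and follows essentially the same route as the paper: both arguments rest on Proposition~\ref{seminorm} (the kernel of the order seminorm on $V_h$ is the intersection of the kernels of the real states), the real--complex state correspondence (you cite the packaged fact $N = N_\R \oplus i N_\R$ from \S\ref{Arch-Complex-Vec-Spaces}, while the paper runs the complexification argument of Proposition~\ref{complexification-linear-positive} directly), and the common-kernel statement of Proposition~\ref{all-order-norms-equiv}. The only difference is organizational --- you show all five conditions are equivalent to the single condition $N = \{0\}$, whereas the paper runs the cycle $(1)\Rightarrow(2)\Rightarrow(3)\Rightarrow(4)\Rightarrow(5)\Rightarrow(1)$ --- which is a repackaging of the same ingredients rather than a different argument.
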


\begin{proof}
$(1) \Longrightarrow (2)$.  Suppose the order seminorm $\| \cdot \|$ on $V_h$ is a norm.  If $v \in \displaystyle \bigcap_{ f : V_h \to \R \atop \text{is a state}} \ker f$, then it follows from Proposition~\ref{seminorm} that $\| v \| = 0$ and hence $v = 0$.

\noindent $(2) \Longrightarrow (3)$.  Suppose $\displaystyle \bigcap_{ f : V_h \to \R \atop \text{is a state}} \ker f = \{ 0 \}$.  Let $v \in \displaystyle \bigcap_{ f : V \to \C \atop \text{is a state}} \ker f$.  Then for any state $f : V_h \to \R$, it follows from Proposition~\ref{complexification-linear-positive} that $\widetilde{f} : V \to \C$ is a state.  Hence $f(\operatorname{Re}(v)) + i f(\operatorname{Im}(v)) = \widetilde{f} (v) = 0$, and $f(\operatorname{Re}(v)) = f(\operatorname{Im}(v)) = 0$.  Since $f$ was arbitrary, we have that $\operatorname{Re}(v), \operatorname{Im}(v) \in \displaystyle \bigcap_{ f : V_h \to \R \atop \text{is a state}} \ker f = \{ 0 \}$.  Thus $v = 0$.

\noindent $(3) \Longrightarrow (4)$.  It follows from Definition~\ref{min-defn} that the minimal order seminorm $\| \cdot \|_m$ is a norm.

\noindent $(4) \Longrightarrow (5)$.  This follows from Proposition~\ref{all-order-norms-equiv}.

\noindent $(5) \Longrightarrow (1)$. This follows from the fact that the order seminorm on $V_h$ is the restriction of any order seminorm on $V$.

\end{proof}

\begin{corollary}
Let $(V,V^+)$ be an ordered $*$-vector space with an Archimedean order unit $e$.  Then every order seminorm on $V$ is a norm.
\end{corollary}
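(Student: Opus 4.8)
The plan is to invoke the preceding proposition and thereby reduce the entire corollary to verifying its condition (1), namely that the order seminorm on $V_h$ is a norm. Everything else has already been done: that proposition establishes the equivalence of condition (1) with condition (5), ``all order seminorms on $V$ are norms,'' which is precisely the assertion of the corollary. So once (1) is in hand, the implication $(1) \Longrightarrow (5)$ finishes the argument.

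To verify (1), I would first translate the hypothesis into the real setting. By the remark following the definition of an order unit for an ordered $*$-vector space, the assumption that $e$ is an Archimedean order unit for $(V,V^+)$ is equivalent to $e$ being an Archimedean order unit for the ordered real vector space $(V_h, V^+)$. This is the crucial bridge, since the result that an Archimedean order unit yields a genuine norm rather than merely a seminorm was proved only in the real setting. With this translation made, Proposition~\ref{Arch-gives-norm-cor} applies directly to $(V_h, V^+)$ and shows that the order seminorm on $V_h$ is a norm, establishing condition (1).

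There is essentially no obstacle in this argument; it is a short chaining together of results already available. The only point requiring care is to cite the equivalence of Archimedean-ness in the $*$-vector space sense and in the real vector space sense, rather than attempting to re-prove the norm property directly on $V$, since Proposition~\ref{Arch-gives-norm-cor} is phrased for ordered real vector spaces and applies to $V_h$, not to the full complex space $V$. Once condition (1) is obtained in this way, the corollary follows immediately from the $(1) \Longrightarrow (5)$ implication of the preceding proposition.
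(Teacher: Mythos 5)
Your proof is correct and is essentially the argument the paper intends: the corollary is stated without proof precisely because it follows from the preceding proposition's equivalence $(1) \Longleftrightarrow (5)$, once one notes (via the remark on Archimedean order units in the $*$-vector space setting) that $e$ Archimedean for $(V,V^+)$ means $e$ Archimedean for $(V_h,V^+)$, so Proposition~\ref{Arch-gives-norm-cor} makes the order seminorm on $V_h$ a norm.
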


\begin{definition}
In the case when one (and hence all) of the order seminorms on $V$ is a norm, we will refer to it as an \emph{order norm}.   In this case, we also call $\| \cdot \|_m$ the \emph{minimal order norm on $V$}, we call $\| \cdot \|_M$ the \emph{maximal order norm on $V$}, and we call $\| \cdot \|_\dec$ the \emph{decomposition norm on $V$}.
\end{definition}

\begin{definition} \label{complex-order-top-def}
Let $(V,V^+)$ be an ordered $*$-vector space with an order unit $e$.   The \emph{order topology} on $V$ is the topology generated by any order seminorm on $V$.  Proposition~\ref{all-order-norms-equiv} shows that this topology is independent of which order seminorm is used.
\end{definition}

\begin{remark}
Since any order norm on $V$ is a $*$-norm, we see that $V_h$ is a closed subset of $V$.  In addition, because any order norm on $V$ restricts to the order norm on $V_h$, the subspace topology on $V_h$ is equal to the topology induced by the order norm on $V_h$.
\end{remark}

\begin{lemma} \label{pos-fct-norm-f(e)-lem}
Let $(V,V^+)$ be an ordered $*$-vector space with an Archimedean order unit $e$.  Let $||| \cdot |||$ be any order norm on $V$ and $f : V \to \C$ be a positive functional.  If $\| f \|$ denotes the norm of the functional $f$ with respect to the order norm $||| \cdot |||$, then $\| f \| = |f(e)|$.
\end{lemma}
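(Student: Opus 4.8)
The plan is to prove the two inequalities $\|f\| \ge |f(e)|$ and $\|f\| \le |f(e)|$ separately, where throughout $\|f\|$ denotes the operator norm $\sup\{|f(v)| : |||v||| \le 1\}$. The first thing I would record is that since $e$ is an order unit, Lemma~\ref{basic-order-unit-facts}(a) gives $e \in V^+$, so positivity of $f$ forces $f(e) \in [0,\infty)$ and hence $|f(e)| = f(e)$. Moreover $e \in V_h$, and every order norm restricts to the order norm on $V_h$, so $|||e||| = \|e\| = 1$ by Theorem~\ref{norm-char}. Evaluating $f$ at $e$ then immediately yields the lower bound $\|f\| \ge |f(e)|/|||e||| = |f(e)|$.

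For the upper bound I would mimic the phase-rotation argument from the proof of Theorem~\ref{min-norm}(3). Given $v \in V$, choose $\theta \in \R$ with $|f(v)| = e^{i\theta} f(v) = f(e^{i\theta} v)$, and set $w := e^{i\theta} v$, so that $f(w) = |f(v)| \in [0,\infty)$. By Proposition~\ref{pos-iff-restrict-pos} the restriction $g := f|_{V_h}$ is a positive $\R$-linear functional with $f = \widetilde{g}$; in particular $f(w) = g(\operatorname{Re}(w)) + i\, g(\operatorname{Im}(w))$ with $g(\operatorname{Re}(w)), g(\operatorname{Im}(w)) \in \R$. Since $f(w)$ is a nonnegative real number, this forces $g(\operatorname{Im}(w)) = 0$ and $g(\operatorname{Re}(w)) = f(w) = |f(v)|$.

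Now I would invoke the real-variable estimate. Applying Proposition~\ref{positive-R-functionals-cts} to the positive $\R$-linear functional $g$ on $(V_h, V^+)$ gives $g(\operatorname{Re}(w)) \le g(e)\,\|\operatorname{Re}(w)\|$, where $\|\cdot\|$ is the order norm on $V_h$ and $g(e) = f(e)$. Finally, because $|||\cdot|||$ is a $*$-norm, Lemma~\ref{real-im-smaller} yields $\|\operatorname{Re}(w)\| = |||\operatorname{Re}(w)||| \le |||w||| = |||v|||$, the first equality holding since $\operatorname{Re}(w) \in V_h$ and order norms restrict to the order norm there, and the last equality since $|e^{i\theta}| = 1$. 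Chaining these inequalities gives $|f(v)| = g(\operatorname{Re}(w)) \le f(e)\,|||v|||$ for every $v \in V$, hence $\|f\| \le f(e) = |f(e)|$. Combining with the lower bound completes the proof.

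The only genuinely delicate step is the phase rotation together with the observation that $f(w)$ being a nonnegative real number forces the imaginary contribution $g(\operatorname{Im}(w))$ to vanish, which is precisely what lets me reduce the complex estimate to the real-case bound of Proposition~\ref{positive-R-functionals-cts}; everything else is a direct application of that bound and of Lemma~\ref{real-im-smaller}.
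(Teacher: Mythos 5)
Your proof is correct; every step checks out against the paper's definitions, and the chain $|f(v)| = g(\operatorname{Re}(w)) \le f(e)\,\|\operatorname{Re}(w)\| \le f(e)\,|||v|||$ is valid. However, it takes a different route from the paper's own proof. The paper first disposes of the case $f(e)=0$ (using Proposition~\ref{positive-R-functionals-cts} to get $f|_{V_h}=0$ and then Proposition~\ref{pos-iff-restrict-pos} to conclude $f=0$), and in the case $f(e)\neq 0$ it normalizes $f$ to the state $g := (1/f(e))f$, bounds $|g(v)| \le \|v\|_m$ directly from the definition of the minimal order norm as a supremum over states, and finishes with the minimality $\|\cdot\|_m \le |||\cdot|||$ from Theorem~\ref{min-norm}(3). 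You instead bypass the minimal order norm entirely: your phase-rotation argument together with Proposition~\ref{pos-iff-restrict-pos}, the real-case bound of Proposition~\ref{positive-R-functionals-cts}, and Lemma~\ref{real-im-smaller} is in effect an inlined re-derivation of the estimate that underlies Theorem~\ref{min-norm}(3). What the paper's route buys is brevity: given the machinery already established in \S\ref{minimal-seminorm-subsec}, the whole argument is three lines. What your route buys is uniformity and transparency: you avoid the case split at $f(e)=0$ (since you never need to divide by $f(e)$), and you make explicit that the lemma rests only on the real-variable estimate for positive $\R$-linear functionals plus the $*$-norm property of order norms, with no reference to $\|\cdot\|_m$ or to states at all. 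Both are legitimate; yours is the more self-contained of the two.
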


\begin{proof}
Suppose that $f(e) = 0$.  Then by Proposition~\ref{positive-R-functionals-cts} $f|_{V_h}=0$ and by Proposition~\ref{pos-iff-restrict-pos} we have $f = 0$, and the claim holds.  If $f(e) \neq 0$, then $g := (1/f(e)) f$ is a state, and for any $v \in V$ we have that 
\begin{align*}
| f(v) | &= f(e) | g(v) | \leq f(e) \sup \{ | h(v) | : h : V \to \C  \text{ is a state} \} \\
&\leq f(e) \| v \|_m \leq f(e) ||| v |||
\end{align*}
so that $\| f \| \leq f(e)$.  In addition, since $||| e ||| = \| e \| = 1$, it follows that $\| f \| = f(e)$.
\end{proof}

\begin{corollary} \label{pos-fct-order-cts-cor}
Let $(V,V^+)$ be an ordered $*$-vector space with an Archimedean order unit $e$.  If $f : V \to \C$ is a positive functional, then $f$ is continuous with respect to the order topology on $V$.
\end{corollary}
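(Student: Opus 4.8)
The plan is to read this off almost immediately from the preceding lemma. By Lemma~\ref{pos-fct-norm-f(e)-lem}, after fixing any order norm $||| \cdot |||$ on $V$ (such a norm exists and is genuinely a norm, rather than merely a seminorm, precisely because $e$ is Archimedean), the functional norm of $f$ with respect to $||| \cdot |||$ satisfies $\| f \| = |f(e)|$, which is a finite nonnegative real number. The first and essentially only conceptual step is to recall the standard fact, already recorded just after Definition~\ref{order-seminorm-top-def}, that a linear map between seminormed spaces is continuous if and only if it is bounded. Since $f$ has finite functional norm $|f(e)|$, it is bounded with respect to $||| \cdot |||$, and is therefore continuous for the topology that $||| \cdot |||$ induces.

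It then remains to check that continuity for this one norm is the same as continuity for the order topology. For this I would invoke Definition~\ref{complex-order-top-def} together with Proposition~\ref{all-order-norms-equiv}: the order topology on $V$ is by definition the topology generated by any order seminorm, and all order seminorms on $V$ are equivalent and hence generate the same topology. Consequently the particular choice of $||| \cdot |||$ used above is immaterial, and boundedness (equivalently, continuity) with respect to it is exactly continuity in the order topology. I do not anticipate any genuine obstacle here, since the substantive work—namely computing the functional norm of a positive functional—has already been carried out in Lemma~\ref{pos-fct-norm-f(e)-lem}, and this corollary merely repackages that estimate through the boundedness-equals-continuity principle.
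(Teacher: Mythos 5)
Your proof is correct and is precisely the argument the paper intends: the corollary is stated without proof because it follows immediately from Lemma~\ref{pos-fct-norm-f(e)-lem} (finite functional norm $\Rightarrow$ bounded $\Rightarrow$ continuous), combined with the fact that the order topology is independent of the choice of order norm. No gaps.
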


\subsection{The decomposition seminorm $\| \cdot \|_\dec$} \label{decomp-seminorm-subsec}

$ $

\smallskip

In addition to the minimal and maximal order seminorms, there is another order seminorm that will play a central role in our study of ordered $*$-vector spaces.

\begin{definition} \label{decomp-seminorm} Let $(V,V^+)$ be an ordered $*$-vector space with order unit $e$, and let $\| \cdot \|$ denote the order norm on $V_h$.  We define the \emph{decomposition seminorm} $\| \cdot \|_\dec :V \to [0,\infty)$ by $$ \|v\|_{\dec} := \inf \left\{ \left\| \sum_{i=1}^n |\lambda_i|p_i \right\|: v = \sum_{i=1}^n \lambda_i p_i \text{ with } p_i \in V^+ \text{ and } \lambda_i \in \mathbb{C} \right\}.$$
Note that the above set is nonempty since for any $v \in V$ we may write $\operatorname{Re} (v) = p_1 - p_2$ for $p_1, p_2 \in V^+$ and we may write $\operatorname{Im} (v) = q_1 - q_2$ for $q_1, q_2 \in V^+$, so that $v = p_1 - p_2 + i q_1 - i q_2$.
\end{definition}

\begin{proposition}
Let $(V,V^+)$ be an ordered $*$-vector space with order unit $e$. Then the decomposition seminorm $\| \cdot \|_{\dec}$ is an order seminorm on $V$.
\end{proposition}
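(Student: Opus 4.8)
The plan is to verify directly that $\|\cdot\|_{\dec}$ meets the three requirements in Definition~\ref{order-norms-def}: it is a seminorm, it satisfies $\|v^*\|_{\dec}=\|v\|_{\dec}$, and it restricts to the order seminorm $\|\cdot\|$ on $V_h$. Throughout I will use that for any representation $v=\sum_i\lambda_i p_i$ with $p_i\in V^+$ the element $P:=\sum_i|\lambda_i|p_i$ lies in $V^+\subseteq V_h$, so that $\|P\|$ is meaningful; recall also that the defining set is nonempty for every $v$.

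First, the seminorm axioms. Homogeneity is immediate from the bijection $v=\sum_i\lambda_i p_i \leftrightarrow \mu v=\sum_i(\mu\lambda_i)p_i$, under which $\sum_i|\mu\lambda_i|p_i=|\mu|\sum_i|\lambda_i|p_i$, so that the associated values scale by $|\mu|$; taking infima gives $\|\mu v\|_{\dec}=|\mu|\,\|v\|_{\dec}$ (the case $\mu=0$ being handled by the representation $0=0\cdot p$, which shows $\|0\|_{\dec}=0$). For the triangle inequality, concatenating a representation of $v$ with one of $w$ yields a representation of $v+w$ whose associated positive element is $(\sum_i|\lambda_i|p_i)+(\sum_j|\mu_j|q_j)$; applying the triangle inequality of the order seminorm on $V_h$ and taking infima over the two families of representations independently gives $\|v+w\|_{\dec}\le\|v\|_{\dec}+\|w\|_{\dec}$. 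The $*$-property holds because $v=\sum_i\lambda_i p_i$ exactly when $v^*=\sum_i\overline{\lambda_i}p_i$ (using $p_i^*=p_i$), and $|\overline{\lambda_i}|=|\lambda_i|$ leaves $P$ unchanged, so the two infima coincide.

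The substantive step is to show $\|v\|_{\dec}=\|v\|$ for $v\in V_h$, and this is where I expect the main work. For the inequality $\|v\|_{\dec}\le\|v\|$, fix $r>\|v\|$; then $re+v\ge 0$ and $re-v\ge 0$, so $v=\tfrac12(re+v)-\tfrac12(re-v)$ is a representation whose associated positive element is $\tfrac12(re+v)+\tfrac12(re-v)=re$, of norm $r$. Letting $r\downarrow\|v\|$ gives $\|v\|_{\dec}\le\|v\|$. For the reverse inequality, take any representation $v=\sum_i\lambda_i p_i$; since $v=v^*=\sum_i\overline{\lambda_i}p_i$ we get $v=\tfrac12(v+v^*)=\sum_i\operatorname{Re}(\lambda_i)p_i$. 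Setting $P=\sum_i|\lambda_i|p_i$, the inequalities $|\lambda_i|\pm\operatorname{Re}(\lambda_i)\ge 0$ together with $p_i\in V^+$ force $P-v=\sum_i(|\lambda_i|-\operatorname{Re}(\lambda_i))p_i\ge 0$ and $P+v=\sum_i(|\lambda_i|+\operatorname{Re}(\lambda_i))p_i\ge 0$, that is, $-P\le v\le P$. The monotonicity property of the order seminorm, Theorem~\ref{norm-char}(2), then yields $\|v\|\le\|P\|$; taking the infimum over all representations gives $\|v\|\le\|v\|_{\dec}$.

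The main obstacle is precisely this last equality on $V_h$: since $\|\cdot\|_{\dec}$ decomposes $v$ into \emph{positive} pieces rather than arbitrary hermitian ones, the maximal-norm argument cannot be copied verbatim. The two crucial observations are the explicit symmetric decomposition $v=\tfrac12(re+v)-\tfrac12(re-v)$ for the easy direction, and the comparison $-P\le v\le P$ combined with order-monotonicity for the hard direction. Once the equality on $V_h$ is established, Definition~\ref{order-norms-def} is satisfied and $\|\cdot\|_{\dec}$ is an order seminorm on $V$.
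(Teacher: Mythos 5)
Your proposal is correct and follows essentially the same route as the paper's own proof: the easy direction uses the identical decomposition $v=\tfrac12(re+v)-\tfrac12(re-v)$ with associated positive element $re$, and the hard direction uses the same comparison $-\sum_i|\lambda_i|p_i\le v\le \sum_i|\lambda_i|p_i$ (obtained from $v=v^*$, hence $v=\sum_i\operatorname{Re}(\lambda_i)p_i$) together with the monotonicity property of Theorem~\ref{norm-char}(2). The only difference is that you spell out the seminorm axioms and the $*$-invariance, which the paper dismisses as straightforward.
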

\begin{proof} First, it is straightforward to prove that the decomposition seminorm is a seminorm. Also, if $v = \sum_i \lambda_i p_i$ is a sum as above, then $v^* = \sum \overline{\lambda_i} p_i,$ and from this it follows that $\|v^*\|_{\dec} =\|v\|_{\dec},$ so the decomposition seminorm is a $*$-seminorm.

Thus it remains to show that if $h \in V_h,$ then $\|h\|=\|h\|_{\dec}.$ To this end, we fix $\epsilon > 0$ and choose $r \in \mathbb{R}$ with $\|h\|+ \epsilon > r > \|h\|$.  Then $h = \frac{1}{2}( re+h) + \frac{-1}{2}(re-h)$ is an expression of $h$ as a linear combination of positives, and hence $\|h\|_{\dec} \le \| |\frac{1}{2}|(re+h) + |\frac{-1}{2}|(re-h) \| = r.$ Since $\epsilon > 0$ was arbitrary, we have that $\|h \|_{\dec} \le \|h\|.$

Conversely, if $h = \sum_i \lambda_i p_i$ is any expression of the above form, then
$h=h^* = \sum_i \overline{\lambda_i} p_i$ and hence, $h = \sum_i \operatorname{Re}(\lambda_i) p_i \le \sum_i |\lambda_i|p_i.$  Similarly, $-h \le \sum_i |\lambda_i| p_i$ and so Theorem~\ref{norm-char}(2) implies that $\|h\| \le \|\sum_i |\lambda_i| p_i \|$.
Since this inequality holds for all such sums, we have that $\|h\| \leq \|h\|_{\dec}$.  Thus $\|h\| = \|h\|_{\dec}$
\end{proof}

We now turn our attention to positive linear maps and their relationship with the order seminorms. 

\begin{lemma} \label{technical-min-dec-lem}
Let $(V,V^+)$ be an ordered $*$-vector space with order unit $e$.  If $\lambda \in \C$ and $h \in V_h$, then $\| \lambda e + h \|_m = \| \lambda e + h \|_\dec$.
\end{lemma}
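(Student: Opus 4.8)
The plan is to prove the two inequalities $\|\lambda e + h\|_m \le \|\lambda e + h\|_\dec$ and $\|\lambda e + h\|_\dec \le \|\lambda e + h\|_m$ separately. The first is immediate: since $\|\cdot\|_\dec$ is an order seminorm on $V$, Remark~\ref{min-max-order-norms} gives $\|v\|_m \le \|v\|_\dec$ for every $v \in V$, so in particular this holds for $v = \lambda e + h$. Thus the real content is the reverse inequality. To set up, I would write $\lambda = a + ib$ with $a,b \in \R$ and put $h' := ae + h \in V_h$ and $\mu := b \in \R$, so that $\lambda e + h = h' + i\mu e$ with $\operatorname{Re}(\lambda e + h) = h'$ and $\operatorname{Im}(\lambda e + h) = \mu e$.

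First I would compute $\|\lambda e + h\|_m$ explicitly. For any state $f : V \to \C$ we have $f(e) = 1$ and, since $h' \in V_h$, Corollary~\ref{pos-preserves-star} gives $f(h') = \overline{f(h')} \in \R$; hence $|f(h' + i\mu e)|^2 = f(h')^2 + \mu^2$. Taking the supremum over all states and using that $\sup_f f(h')^2 = \big(\sup_f |f(h')|\big)^2 = \|h'\|^2$ (by Theorem~\ref{min-norm}(2), which identifies $\|h'\|_m$ with the order seminorm $\|h'\|$ on $V_h$), I obtain $\|\lambda e + h\|_m = \sqrt{\|h'\|^2 + \mu^2}$.

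The heart of the argument is to produce a decomposition realizing this value for $\|\cdot\|_\dec$. For each $r > \|h'\|$, Definition~\ref{order-seminorm-def} guarantees $p_\pm := re \pm h' \in V^+$. I would look for scalars $\lambda_\pm \in \C$ with $\lambda_+ p_+ + \lambda_- p_- = h' + i\mu e$; since $p_+ + p_- = 2re$ and $p_+ - p_- = 2h'$, the choice $\lambda_\pm = \tfrac12(\pm 1 + i\mu/r)$ works, and crucially $|\lambda_+| = |\lambda_-| = \tfrac12\sqrt{1 + \mu^2/r^2}$ are equal, so that $|\lambda_+|p_+ + |\lambda_-|p_- = \tfrac12\sqrt{1+\mu^2/r^2}\,(p_+ + p_-) = \sqrt{r^2 + \mu^2}\,e$, whose order seminorm is $\sqrt{r^2+\mu^2}$ because $\|e\| = 1$. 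Hence $\|\lambda e + h\|_\dec \le \sqrt{r^2 + \mu^2}$ for every $r > \|h'\|$, and letting $r \downarrow \|h'\|$ yields $\|\lambda e + h\|_\dec \le \sqrt{\|h'\|^2 + \mu^2} = \|\lambda e + h\|_m$, completing the proof.

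The main obstacle, and the only nonroutine step, is finding the decomposition above: the key insight is to use exactly the two positives $re \pm h'$ and to exploit the freedom in the complex coefficients to arrange $|\lambda_+| = |\lambda_-|$, which forces the positive combination $\sum_i |\lambda_i| p_i$ to collapse to a scalar multiple of $e$, whose seminorm is trivially computable. Everything else is routine bookkeeping, and the passage to the limit in $r$ is harmless since $\sqrt{r^2+\mu^2}$ is continuous in $r$.
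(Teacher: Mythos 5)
Your proof is correct, and it shares the paper's overall skeleton --- the inequality $\|\lambda e + h\|_m \le \|\lambda e + h\|_\dec$ comes for free from minimality (Remark~\ref{min-max-order-norms}), and the reverse comes from exhibiting a two-term decomposition over positives of the form $(\text{scalar})e \pm (\text{hermitian})$ and passing to a limit --- but your normalization is genuinely different and buys a much cleaner computation. The paper writes $\lambda e + h = \mu e + k$ with $k := h - \frac{\alpha+\beta}{2}e$ recentered so that its values under states fill a symmetric interval $[-b,b]$ (here $\alpha,\beta$ are the infimum and supremum of $f(h)$ over states and $b = \frac{\beta-\alpha}{2}$), keeps the full complex scalar $\mu = \lambda + \frac{\alpha+\beta}{2}$, and decomposes $\mu e + k = \frac{\mu+c}{2c}(ce+k) + \frac{\mu-c}{2c}(ce-k)$ for $c > b$; since those two coefficients have unequal moduli, the element $\left|\frac{\mu+c}{2c}\right|(ce+k) + \left|\frac{\mu-c}{2c}\right|(ce-k)$ is a genuine combination of $e$ and $k$ whose order norm must itself be evaluated by a supremum over states, after which one lets $c \downarrow b$. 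You instead absorb $\operatorname{Re}(\lambda)$ into the hermitian part, writing $\lambda e + h = h' + i\mu e$ with $h' = ae+h$ and $\mu \in \R$, so that the remaining scalar is purely imaginary; the payoff is that your coefficients $\frac{1}{2}(\pm 1 + i\mu/r)$ have equal modulus, the positive combination collapses to $\sqrt{r^2+\mu^2}\,e$ whose order norm is trivially $\sqrt{r^2+\mu^2}$, and the minimal norm acquires the closed form $\sqrt{\|h'\|^2+\mu^2}$ (which agrees with the paper's $\max\{|\mu+b|,|\mu-b|\}$, as a short computation confirms). In effect both proofs rest on the same algebraic identity, but your choice of where to place the scalar eliminates the state-evaluation of the decomposed positive element and the attendant bookkeeping. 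One small point of hygiene: to justify $re \pm h' \in V^+$ for every $r > \|h'\|$, you should cite Lemma~\ref{basic-order-unit-facts}(b) alongside Definition~\ref{order-seminorm-def} (pick $s$ in the defining set with $s < r$ and add $(r-s)e \geq 0$), rather than the bare definition of the infimum; this is the same routine step the paper uses silently, so it is not a gap, just a citation worth tightening.
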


\begin{proof}
Let $\alpha := \inf \{ f(h) : f \text{ is a state on } V \}$ and let $\beta := \sup \{ f(h) : f \text{ is a state on } V \}$.  Also set $r := \frac{ \alpha + \beta}{2}$, $\mu := \lambda + r$, and $k := h - re$.  Then $\lambda e + h = \mu e + k$, and it suffices to show that $\| \mu e + k \|_m = \| \mu e + k \|_\dec$.  Note, by the choice of $r$, that if we let $b := \frac{\beta - \alpha}{2}$, then $$ \sup \{ f(k) : f \text{ is a state on } V \} = \beta - r = \frac{\beta - \alpha}{2} = b \geq 0$$ and $$ \inf \{ f(k) : f \text{ is a state on } V \} = \alpha - r = \frac{\alpha - \beta}{2} = -b \leq 0.$$  Also note that
\begin{align*}
\| \mu e + k \|_m &= \sup \{ | f (\mu e+k) | : f \text{ is a state on } V \} \\
&=  \{ | \mu +f(k) | : f \text{ is a state on } V \} \\
&= \max \{ | \mu + b |, |\mu - b| \} \qquad \text{(since $-b \leq f(k) \leq b$ for all states $f$).}
\end{align*} 
Because $\| \mu e + k \|_m \leq \| \mu e + k \|_\dec$, we need only show that $\| \mu e + k \|_\dec \leq \max  \{ \mu + b, \mu - b \}$.  To this end let $c > b$ so that $ce \pm k \geq 0$ and 
$$\mu e + k = \frac { \mu + c }{2c} (ce + k) + \frac{\mu-c}{2c} (ce-k),$$ which implies that
\begin{align*}
& \| \mu e + k \|_\dec \\
\leq & \left\| \, \left| \frac { \mu + c }{2c} \right| (ce + k) + \left| \frac{\mu-c}{2c} \right| (ce-k) \right\| \\
= & \left\| \, \left( \frac {| \mu + c |}{2}+ \frac{|\mu - c |}{2} \right) e + \left( \left| \frac{\mu + c}{2c}\right| - \left| \frac{\mu-c}{2c} \right| \right) k \right\| \\
= & \sup \left\{  \left| \, \left( \frac {| \mu + c |}{2}+ \frac{|\mu - c |}{2} \right) + \left( \left| \frac{\mu + c}{2c}\right| - \left| \frac{\mu-c}{2c} \right| \right) f(k) \right| \ : f \text{ is a state} \right\} \\
= & \max \left\{  \left| \, \left( \frac {| \mu + c |}{2}+ \frac{|\mu - c |}{2} \right) \pm  \frac{b}{c} \left( \frac{| \mu + c |}{2} - \frac{|\mu-c|}{2} \right) \, \right| \right\}.
\end{align*}
Since the above inequality holds for all $c > b$, we may conclude that it also holds for $c = b$ and thus we have 
\begin{align*}
\| \mu e + k \|_\dec &\leq \max \left\{ \left( \frac {| \mu + b |}{2}+ \frac{|\mu - b |}{2} \right) \pm  \left( \frac{|\mu + b|}{2} - \frac{|\mu-b|}{2} \right)  \right\} \\
&= \max \{ | \mu + b |, | \mu - b | \} \\
&= \| \mu e + k \|_m. 
\end{align*}
\end{proof}

\begin{definition}
Given a linear map $\phi: V \to W$ between ordered $*$-vector spaces, we let $\|\phi\|_m$ (respectively, $\| \phi \|_\dec$) denote the seminorm of the map $\phi$ when both of $V$ and $W$ are given the minimal seminorm (respectively, the decomposition seminorm).
\end{definition}

\begin{theorem} Let $(V,V^+)$ be an ordered $*$-vector space with Archimedean order unit $e$, and let $(W,W^+)$ be an ordered $*$-vector space with Archimedean order unit $e'$.  If $\phi:V \to W$ is a unital linear map, then the following are equivalent: 
\begin{enumerate}
\item $\phi$ is positive,
\item $\|\phi\|_m =1$,
\item $\|\phi\|_{\dec} =1$.
\end{enumerate}
\end{theorem}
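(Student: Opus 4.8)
The plan is to establish the four implications $(1)\Rightarrow(2)$, $(1)\Rightarrow(3)$, $(2)\Rightarrow(1)$, and $(3)\Rightarrow(1)$. The two implications out of $(1)$ are direct norm estimates, while both converses will be deduced from a single functional-level fact, which I isolate first.

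The key sublemma is: if $f:V\to\C$ is a unital linear functional with $\|f\|_m\le 1$ (respectively $\|f\|_{\dec}\le 1$, where these denote the norms of $f$ as a functional for the corresponding order norms), then $f$ is a state. To prove it I would first show $f(V_h)\subseteq\R$ by a rotation trick. For $h\in V_h$ with $\|h\|\le 1$ and $t\in\R$, every state $k$ on $V$ satisfies $k(h)\in\R$ by Corollary~\ref{pos-preserves-star} and $|k(h)|\le\|h\|_m\le 1$, so $\|h+ite\|_m=\sup_k|k(h)+it|\le\sqrt{1+t^2}$; writing $f(h)=a+bi$, the estimate $|f(h+ite)|\le\|f\|_m\,\|h+ite\|_m\le\sqrt{1+t^2}$ gives $a^2+(b+t)^2\le 1+t^2$ for all $t\in\R$, which forces $b=0$. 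For the decomposition norm the identical computation applies, since Lemma~\ref{technical-min-dec-lem} yields $\|h+ite\|_{\dec}=\|ite+h\|_m\le\sqrt{1+t^2}$. Once $f$ is real on $V_h$, its restriction $f|_{V_h}$ is an $\R$-linear functional with $f|_{V_h}(e)=1$; evaluating at $e$ gives $\|f|_{V_h}\|\ge 1$, while the hypothesis gives $\|f|_{V_h}\|\le 1$, so $\|f|_{V_h}\|=f(e)$ and Proposition~\ref{certain-R-functionals-cts-implies-pos} shows $f|_{V_h}$ is positive. Since $f$ is $\C$-linear and real on $V_h$ we have $f=\widetilde{f|_{V_h}}$, so $f$ is positive by Proposition~\ref{pos-iff-restrict-pos}, i.e.\ a state.

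For $(1)\Rightarrow(2)$ I would observe that for any state $g$ on $W$ the functional $g\circ\phi$ is positive and unital, hence a state on $V$, so $|g(\phi(v))|\le\|v\|_m$; taking the supremum over all such $g$ gives $\|\phi(v)\|_m\le\|v\|_m$, and since $\phi(e)=e'$ with $\|e\|_m=\|e'\|_m=1$ we conclude $\|\phi\|_m=1$. For $(1)\Rightarrow(3)$, given any decomposition $v=\sum_i\lambda_i p_i$ with $p_i\in V^+$, positivity yields $\phi(v)=\sum_i\lambda_i\phi(p_i)$ with $\phi(p_i)\in W^+$, so $\|\phi(v)\|_{\dec}\le\|\sum_i|\lambda_i|\phi(p_i)\|=\|\phi(\sum_i|\lambda_i|p_i)\|$; because $\sum_i|\lambda_i|p_i\in V_h$ and $\|\phi\|_m=1$ from the previous implication, this last quantity is at most $\|\sum_i|\lambda_i|p_i\|$, and taking the infimum over decompositions gives $\|\phi(v)\|_{\dec}\le\|v\|_{\dec}$, whence $\|\phi\|_{\dec}=1$.

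For the converses $(2)\Rightarrow(1)$ and $(3)\Rightarrow(1)$ I would fix a state $g$ on $W$. By Lemma~\ref{pos-fct-norm-f(e)-lem} its functional norm in either the minimal or the decomposition order norm equals $g(e')=1$, so submultiplicativity of the operator norm gives $\|g\circ\phi\|_m\le\|g\|\,\|\phi\|_m=1$ (respectively $\|g\circ\phi\|_{\dec}\le 1$). The sublemma then shows $g\circ\phi$ is a state on $V$, so $g(\phi(p))\ge 0$ for every $p\in V^+$ and every state $g$ on $W$; since $e'$ is Archimedean, Proposition~\ref{complex-states-pos-imply-pos} gives $\phi(p)\in W^+$, and $\phi$ is positive. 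The main obstacle is the sublemma, and within it the reality argument together with the transfer of the norm bound from the minimal to the decomposition norm via Lemma~\ref{technical-min-dec-lem}; the remaining steps are routine operator-norm bookkeeping.
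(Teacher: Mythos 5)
Your proof is correct, but the converse implications $(2)\Rightarrow(1)$ and $(3)\Rightarrow(1)$ follow a genuinely different route from the paper's. The paper argues geometrically: assuming $\phi(p)\notin W^+$, it takes a state $f$ on $W$ with $\alpha = f(\phi(p))\notin \R^+$ (the contrapositive of Proposition~\ref{complex-states-pos-imply-pos}), encloses the segment $\{t\in\C : 0\le t\le \|p\|\}$ in a circle of center $c$ and radius $r$ that excludes $\alpha$, and derives the contradiction $r < |c-\alpha| = |f(\phi(ce-p))| \le \|\phi(ce-p)\|_m \le \|\phi\|_m\,\|ce-p\|_m \le r$; the decomposition case runs the same element $ce-p$ through Lemma~\ref{technical-min-dec-lem}. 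You instead isolate a standalone functional-level sublemma --- every unital functional contractive for $\|\cdot\|_m$ or $\|\cdot\|_{\dec}$ is a state --- proved by the classical translation trick (the bound $a^2+(b+t)^2\le 1+t^2$ for all real $t$ forces $b=0$, so $f$ is real on $V_h$), followed by Propositions~\ref{certain-R-functionals-cts-implies-pos} and~\ref{pos-iff-restrict-pos}; you then apply it to $g\circ\phi$ for every state $g$ of $W$ (using Lemma~\ref{pos-fct-norm-f(e)-lem} to get $\|g\|=1$) and finish with Proposition~\ref{complex-states-pos-imply-pos}, exactly the proposition the paper starts from. Both routes invoke Lemma~\ref{technical-min-dec-lem} at the same pressure point, namely comparing the decomposition and minimal norms on elements of the form $\lambda e + h$ with $h\in V_h$. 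What your route buys is modularity: the sublemma is the order-unit analogue of the $C^*$-algebra fact that unital contractive functionals are states, it is reusable, and it makes the two converses formally identical. What the paper's route buys is economy: working directly with the element $ce-p$, it never needs to show the functional is real on $V_h$, so it bypasses the translation trick and the two propositions on real functionals. Your forward implications agree with the paper's, apart from the cosmetic point that you justify $\bigl\|\phi\bigl(\sum_i |\lambda_i| p_i\bigr)\bigr\| \le \bigl\|\sum_i |\lambda_i| p_i\bigr\|$ by appealing to $\|\phi\|_m = 1$ rather than directly to positivity of $\phi$ and monotonicity of the order norm.
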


\begin{proof}
\noindent $(1) \Longrightarrow (2)$.  Suppose that $\phi$ is positive.  If $f:W \to \mathbb{C}$ is a state, then $f \circ \phi:V \to \mathbb{C}$ is a state, and hence $\|\phi(v)\|_m = \sup \{ |f(\phi(v))| : f \text{ is a state on $W$} \} \leq \sup \{ |g(v)| : g \text{ is a state on $V$} \} = \|v\|_m,$ so that $\|\phi\|_m \leq 1$.  Since $\phi$ is unital, $\|\phi\|_m =1.$

\noindent $(2) \Longrightarrow (1)$.  Suppose that $\|\phi\|_m = 1$, and let $p \in V^+$. If $\phi(p) \notin W^+$, then there exists a state $f$ on $W$ such that, $\alpha = f(\phi(p)) \notin \mathbb{R}^+$.  If we consider the convex set $S := \{ t \in \C : 0 \leq t \leq \| p \| \}$ in $\C$, then since any convex set is the intersection of all circles containing it, there exists a circle containing $S$ but not containing $\alpha$.  Let $c \in \C$ be the center of this circle and let $r$ be its radius.  Note that $c$ and $r$ have the properties that $|c - \alpha| > r$, and $0 \le t \le \|p\|$ implies $|c-t| \leq r$. For any state $g : V \to \C$ we have that $0 \leq g(p) \leq \| p \|$, and thus $| g(ce - p ) | = | c - g(p) | \leq r$.  Taking the supremum over all states $g$ yields $\| ce - p \|_m < r$.  In addition, since $f$ is a state on $W$ we obtain $\|\phi(ce -p)\|_m \ge |f(\phi((ce -p))| = |c - \alpha|>r$.  But the fact that $\| ce - p \|_m \leq r$ and $\|\phi(ce -p)\|_m > r$ contradicts that $\| \phi \|_m = 1$.  Thus we must have that $\phi(p) \in W^+$, and $\phi$ is positive.

\noindent $(1) \Longrightarrow (3)$.   Suppose that $\phi$ is positive.  Let $v \in V$ and write $v = \sum_i \lambda_i p_i$ with $p_i \in V^+$. Since $\phi$ is positive linear we have $\phi(v) = \sum_i \lambda_i \phi(p_i)$ with $\phi(p_i) \in W^+$.  Thus $\|\phi(v)\|_{\dec} \leq \|\sum_i | \lambda_i | \phi(p_i) \| = \| \phi(\sum_i |\lambda_i| p_i) \| \leq \| \sum_i |\lambda_i| p_i \|$.  Taking the infimum over the right hand side, yields $\|\phi(v)\|_{\dec} \leq \|v\|_{\dec}$. Since $\phi$ is unital, we have that $\|\phi\|_{\dec} =1$.

\noindent $(3) \Longrightarrow (1)$.  Suppose that $\| \phi \|_\dec =1$, and let $p \in V^+$. If $\phi(p) \notin W^+$. Then there exists a state $f$ on $W$ such that, $\alpha = f(\phi(p)) \notin \mathbb{R}^+$.  As in the proof of $(2) \Longrightarrow (1)$ choose $c \in \C$ and $r > 0$ such that $|c - \alpha| > r$, and $0 \le t \le \|p\|$ implies $|c-t| \leq r$. Then we have that 
\begin{align*}
| c - \alpha  | &= |f (ce' - \phi(p))| \leq \| ce' - \phi (p) \|_m \leq  \| ce' - \phi(p) \|_\dec \\
&= \| \phi ( ce - p) \|_\dec \leq \| ce - p \|_\dec  \ \text{ (since $\| \phi \|_\dec \leq 1$)} \\
&= \| ce - p \|_m \ \text{ (by Lemma~\ref{technical-min-dec-lem})} \\
&= \sup \{ | c-g(p) | : g \text{ is a state on $V$} \} \\
&\leq r   \quad \text{ (since $0 \leq g(p) \leq \| p \|$)}
\end{align*}
which contradicts the fact that $|c - \alpha| > r$.  Thus we must have that $\phi(p) \in W^+$, and $\phi$ is positive.
\end{proof}

\section{Examples of ordered $*$-vector spaces} \label{Examples-sec}

In this section we introduce some of the key examples of ordered *-vector spaces with an Archimedean order unit and compare the minimal, maximal, and decomposition norms to some other well-known norms.

\subsection{Function systems and a complex version of Kadison's Theorem} \label{Funct-Sys-subsec}

\begin{definition}
Let $X$ be a compact Hausdorff space.  A (concrete) \emph{function system} is a self-adjoint subspace of $C(X)$ that contains the identity function.
\end{definition}

Notice that $C(X)$ is an ordered $*$-vector space with $C(X)^+ = \{ f \in C(X) : f(x) \geq 0 \text{ for all } x \in X \}$, and the constant function $1$ is an Archimedean order unit for this space.  Furthermore, if $V \subseteq C(X)$ is a function system, then $V$ is an ordered $*$-vector space with $V^+ := V \cap C(X)^+$ and $1$ is an Archimedean order unit for $V$.  We shall show that all ordered $*$-vector spaces with an Archimedean order unit arise in this fashion.

The following result is the complex version of Kadison's characterization of real function systems \cite[Theorem~II.1.8]{Alf}.

\begin{theorem} \label{characterize-func-sys}
Let $(V,V^+)$ be an ordered $*$-vector space with Archimedean order unit $e$.  Give $V$ the order topology of Definition~\ref{complex-order-top-def}, and endow the state space $S(V) := \{ f: V \to \C : \text{ $f$ is a state } \}$ with the corresponding weak-$*$ topology.  Then $S(V)$ is a compact space, and the map $\Phi : V \to C(S(V))$ given by $\Phi(v) (f) := f(v)$ is an injective map that is an order isomorphism onto its range with the property that $\Phi(e) = 1$.  Furthermore, $\Phi$ is an isometry with respect to the minimal order norm on $V$ and the sup norm on $C(S(V))$.
\end{theorem}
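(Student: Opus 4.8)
The plan is to verify the five assertions in turn, drawing on the functional-analytic machinery developed earlier in the section. First I would establish the compactness of $S(V)$. By Corollary~\ref{pos-fct-order-cts-cor} every state is continuous for the order topology, and by Lemma~\ref{pos-fct-norm-f(e)-lem} each state $f$ satisfies $\|f\| = f(e) = 1$; in particular $|f(v)| \leq \|v\|_m$ for all $v$. Thus $S(V)$ embeds into the product $\prod_{v \in V} \{ z \in \C : |z| \leq \|v\|_m \}$ of compact disks, which is compact by Tychonoff's theorem (equivalently, $S(V)$ sits in the weak-$*$ unit ball of $V^*$, compact by Banach--Alaoglu). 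It then suffices to note that $S(V)$ is weak-$*$ closed: the linearity relations, the condition $f(e) = 1$, and the conditions $f(p) \in [0,\infty)$ for each fixed $p \in V^+$ are all preserved under pointwise (weak-$*$) limits. Hence $S(V)$ is a closed subset of a compact set, and so is compact.

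Next I would check that $\Phi$ is a well-defined, linear, unital, injective map into $C(S(V))$. For fixed $v$, the map $\Phi(v) : f \mapsto f(v)$ is continuous by the very definition of the weak-$*$ topology, so $\Phi(v) \in C(S(V))$. Linearity of $\Phi$ is immediate from the linearity of each state, and $\Phi(e)(f) = f(e) = 1$ for all $f$ shows $\Phi(e) = 1$. For injectivity, if $\Phi(v) = 0$ then $f(v) = 0$ for every state $f$, whence $v = 0$ by Proposition~\ref{states-sep-pts}.

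The heart of the argument is that $\Phi$ is an order isomorphism onto its range $\Phi(V)$, equipped with the inherited cone $\Phi(V) \cap C(S(V))^+$. In the forward direction, if $v \in V^+$ then $f(v) \in [0,\infty)$ for every state $f$ by definition of positivity, so $\Phi(v)$ is a nonnegative (hence real-valued) function and $\Phi(v) \in C(S(V))^+$. For the converse --- the step I expect to carry the real content --- suppose $\Phi(v) \in \Phi(V) \cap C(S(V))^+$, so that $f(v) = \Phi(v)(f) \geq 0$ for every state $f$. Then Proposition~\ref{complex-states-pos-imply-pos} applies directly and yields $v \in V^+$. Combined with injectivity, this shows $\Phi$ is an order isomorphism onto $\Phi(V)$.

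Finally, the isometry statement is immediate from the definitions: for any $v \in V$,
$$ \| \Phi(v) \|_\infty = \sup \{ |\Phi(v)(f)| : f \in S(V) \} = \sup \{ |f(v)| : f \text{ is a state on } V \} = \|v\|_m, $$
using Definition~\ref{min-defn} of the minimal order norm and the definition of the sup norm on $C(S(V))$. The only genuinely nontrivial inputs are the weak-$*$ compactness of $S(V)$ (which rests on the norm bound from Lemma~\ref{pos-fct-norm-f(e)-lem} together with Tychonoff/Banach--Alaoglu) and the reverse positivity implication of Proposition~\ref{complex-states-pos-imply-pos}, which is precisely where the Archimedean hypothesis on $e$ enters; everything else is formal.
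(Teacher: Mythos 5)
Your proposal is correct and follows essentially the same route as the paper: compactness of $S(V)$ via the norm bound $\|f\| = f(e) = 1$ from Lemma~\ref{pos-fct-norm-f(e)-lem} plus weak-$*$ closedness and Alaoglu (your Tychonoff phrasing is just an equivalent packaging), injectivity from Proposition~\ref{states-sep-pts}, the order isomorphism from positivity of states together with Proposition~\ref{complex-states-pos-imply-pos}, and the isometry by direct comparison of the definitions of $\|\cdot\|_m$ and $\|\cdot\|_\infty$. You correctly identify that the Archimedean hypothesis enters exactly through the two separation/positivity propositions, which matches the paper's use of them.
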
 

\begin{proof}
If we endow $V$ with any order norm $||| \cdot |||$ then it follows from Lemma~\ref{pos-fct-norm-f(e)-lem} that $S(V)$ is a subset of the unit ball in $V^*$.  In addition, suppose that $\{ f_\lambda \}_{\lambda \in \Lambda} \subseteq S(V)$ is a net of states, and $\lim f_\lambda = f$ in the weak-$*$ topology for some $f \in V^*$ .  Then for any $v \in V^+$ we have that $\lim f_\lambda(v) = f(v)$, and since $f_\lambda(v)$ is non-negative for all $n$, it follows that $f(v) \geq 0$ for all $v \in V^+$.  Hence $f$ is a positive functional.  In addition $f(e) = \lim f_\lambda(e) = \lim 1 = 1$ so that $f$ is a state.  Thus $S(V)$ is closed in the weak-$*$ topology.  It follows from Alaoglu's Theorem \cite[Theorem~III.3.1]{Co2} that the unit ball in $V^*$ is compact in the weak-$*$ topology, and since $S(V)$ is a closed subset of the unit ball, we have that $S(V)$ is  compact in the weak-$*$ topology.

Consider the map $\Phi : V \to C(S(V))$ given by $\Phi(v) (f) := f(v)$.  If $\Phi(v) = 0$, then $f(v) = 0$ for all states $f : V \to \C$ and it follows from Proposition~\ref{states-sep-pts} that $v = 0$.  Thus $\Phi$ is injective.  

In addition, if $v \in V^+$, then for any $f \in S(V)$ we have that $\Phi(v) (f) = f(v) \geq 0$ by the positivity of $f$.  Hence the function $\Phi(v)$ takes on non-negative values and $\Phi(v) \in C(S(V))^+$.  Conversely, if $\Phi(v) \in C(S(V))^+$, then for all $f \in S(V)$ we have that $f(v) = \Phi(v)(f) \geq 0$, and thus $v \in V^+$ by Proposition~\ref{complex-states-pos-imply-pos}.  Therefore $\Phi$ is an order isomorphism onto its range.

Finally, if $v \in V$, then 
\begin{align*}
\| v \|_m &= \sup \{ |f(v)| : f : V \to \C \text{ is a state} \} \\
&= \sup \{ | \Phi(v)(f) | : f \in S(V) \} \\
&= \| \Phi(v) \|_\infty
\end{align*}
so that $\Phi$ is an isometry with respect to the minimal order norm on $V$ and the sup norm on $C(S(V))$.
\end{proof}

The above theorem gives an abstract characterization of function spaces.  Hence we make the following definition.

\begin{definition}
A \emph{function system} is an ordered $*$-vector space with an Archimedean order unit.
\end{definition}

Theorem~\ref{characterize-func-sys} shows that any function system is order isomorphic to a self-adjoint unital subspace of $C(X)$ via an isomorphism that is isometric with respect to the minimal norm and takes the Archimedean order unit to the constant function $1$.  This characterization is useful because it allows us to view any ordered $*$-vector space with an Archimedean order unit as a subspace of a commutative $C^*$-algebra.

\subsection{Unital $C^*$-algebras} \label{unital-op-alg-subsec}

Let $A \subseteq B(H)$ be a unital $C^*$-algebra with unit $e = I_{H}$, so that $A$ is also an Archimedean ordered $*$-vector space with $A^+$ equal to the usual cone of positive elements. We use $\| \cdot \|_{\textnormal{op}}$ to denote the usual $C^*$-algebra norm in order to distinguish it from the order norm $\| \cdot \|$ on $A_h$ and from the minimal, maximal, and decomposition extensions of this norm to $A$. Note that in this setting the definition of a state given in this paper agrees with the usual definition of a state on a $C^*$-algebra.

\begin{proposition} \label{min-op-norms-coincide}
If $X \in A$ is a normal operator (i.e., $XX^* = X^*X$), then $\|  X \|_m = \| X \|_{\textnormal{op}}$.
\end{proposition}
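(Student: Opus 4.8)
The plan is to establish the two inequalities $\|X\|_m \le \|X\|_{\textnormal{op}}$ and $\|X\|_{\textnormal{op}} \le \|X\|_m$ separately, noting in advance that normality will be needed only for the second. For the upper bound I would observe that $\|X\|_m \le \|X\|_{\textnormal{op}}$ in fact holds for every $X \in A$: if $f : A \to \C$ is a state, then (since $f$ agrees with the usual $C^*$-algebraic notion of a state) the sesquilinear form $(a,b) \mapsto f(b^*a)$ is positive semidefinite, so the Cauchy--Schwarz inequality with $b = e$ gives $|f(X)|^2 = |f(e^*X)|^2 \le f(X^*X)\,f(e) = f(X^*X)$. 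As $X^*X \le \|X^*X\|_{\textnormal{op}}\,e = \|X\|_{\textnormal{op}}^2\,e$ and $f$ is positive and unital, we obtain $f(X^*X) \le \|X\|_{\textnormal{op}}^2$, so $|f(X)| \le \|X\|_{\textnormal{op}}$. Taking the supremum over all states and invoking Definition~\ref{min-defn} yields $\|X\|_m \le \|X\|_{\textnormal{op}}$.

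For the reverse inequality I would use normality to manufacture a single state at which $|f(X)|$ attains $\|X\|_{\textnormal{op}}$. Since $X$ is normal, the $C^*$-subalgebra $B := C^*(X,e) \subseteq A$ generated by $X$ and the unit is commutative, and the Gelfand transform identifies $B$ with $C(\sigma(X))$ so that $X$ corresponds to the inclusion function $z \mapsto z$ and $e$ to the constant function $1$. Because $X$ is normal its spectral radius equals its norm, so there is a point $\lambda \in \sigma(X)$ with $|\lambda| = \|X\|_{\textnormal{op}}$. Evaluation at $\lambda$ is a state $f_0$ on $B \cong C(\sigma(X))$ with $f_0(X) = \lambda$, and $B$ is an ordered $*$-vector space with order unit $e$ in its own right.

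What then remains is to extend $f_0$ to a state on all of $A$ without altering its value on $X$, and this extension is the one genuine obstacle; I would dispatch it with the Hahn--Banach machinery developed earlier. Restricting $f_0$ to the self-adjoint part gives a state $g_0 := f_0|_{B_h}$ on the real ordered space $B_h$, and since $B_h$ is a subspace of $A_h$ containing the order unit $e$, Corollary~\ref{Hahn-Banach-Cor} extends $g_0$ to a positive $\R$-linear functional $g$ on $A_h$; as $g(e) = 1$, it is a state on $A_h$. Its complexification $\widetilde{g} : A \to \C$ is a state on $A$ by Proposition~\ref{complexification-linear-positive}, and Proposition~\ref{pos-iff-restrict-pos} (applied to the positive functional $f_0$ on $B$) shows $f_0 = \widetilde{g_0}$, whence $\widetilde{g}$ restricts to $f_0$ on $B$; in particular $\widetilde{g}(X) = f_0(X) = \lambda$. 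Therefore $\|X\|_m \ge |\widetilde{g}(X)| = |\lambda| = \|X\|_{\textnormal{op}}$, which together with the upper bound gives $\|X\|_m = \|X\|_{\textnormal{op}}$ and completes the argument.
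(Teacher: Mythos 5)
Your proof is correct, and it differs from the paper's in a substantive way. The paper's argument has the same two-inequality skeleton but handles both halves by appeal to standard $C^*$-theory: the upper bound comes from $|f(X)| \leq \|f\| \, \|X\|_{\textnormal{op}} \leq \|X\|_{\textnormal{op}}$ (states have norm $1$), and the lower bound is obtained by simply quoting \cite[Theorem~3.3.6]{Mur}, which asserts exactly that a normal element admits a state $f$ with $|f(X)| = \|X\|_{\textnormal{op}}$. What you do differently is prove that cited fact rather than invoke it: you locate a norming character on the commutative subalgebra $C^*(X,e)$ via the Gelfand transform and the identity $r(X) = \|X\|_{\textnormal{op}}$ for normal $X$, and then extend it to a state on all of $A$ using the paper's own machinery --- Corollary~\ref{Hahn-Banach-Cor}, Proposition~\ref{complexification-linear-positive}, and Proposition~\ref{pos-iff-restrict-pos} --- instead of the usual $C^*$-algebraic extension theorem. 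Your Cauchy--Schwarz argument for the upper bound is likewise more self-contained than the paper's appeal to $\|f\| = 1$. The paper's route buys brevity; yours makes the proposition depend only on basic Gelfand theory plus results already established in the paper, and it makes visible exactly where normality enters (commutativity of $C^*(X,e)$ and spectral radius equal to norm). One point you should make explicit: to apply Corollary~\ref{Hahn-Banach-Cor}, the functional $g_0 = f_0|_{B_h}$ must be positive with respect to the cone $B_h \cap A^+$ of the ambient space, and this agrees with positivity on $B^+$ because an element of $B$ is positive in $A$ if and only if it is positive in $B$ (spectral permanence); with that observation supplied, the extension step is airtight.
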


\begin{proof}
By the definition of the minimal order norm we have that $\| X \|_m := \sup \{ |f(X)| : \, f : A \to \C \text{ is a state} \}$.  But for any state $f : A \to \C$ we have that $|f(X)| \leq \| f \| \|X \|_{\textnormal{op}} \leq \|X \|_{\textnormal{op}}$.  Thus $\|  X \|_m \leq \| X \|_{\textnormal{op}}$.  Furthermore, since $X$ is normal, it follows from \cite[Theorem~3.3.6]{Mur} that there exists a state $f : A \to \C$ with $|f(X)| = \| X \|_{\textnormal{op}}$.  Thus $\|  X \|_m = \| X \|_{\textnormal{op}}$.
\end{proof}

\begin{corollary} \label{com-minimal-op-norm-equal}
If $A$ is commutative, then $\|  \cdot \|_m = \| \cdot \|_{\textnormal{op}}$.
\end{corollary}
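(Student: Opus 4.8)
The plan is to reduce the statement to Proposition~\ref{min-op-norms-coincide} by observing that commutativity trivializes the normality hypothesis. The key point is that for any $X \in A$, if $A$ is commutative then the product $XX^*$ equals $X^*X$ automatically, so every element of $A$ is a normal operator.

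First I would fix an arbitrary $X \in A$ and note that, because $A$ is commutative, the identity $XX^* = X^*X$ holds. Hence $X$ is normal in the sense required by Proposition~\ref{min-op-norms-coincide}, and applying that proposition yields $\| X \|_m = \| X \|_{\textnormal{op}}$. Since $X \in A$ was arbitrary, this equality holds for every element of $A$, and therefore $\| \cdot \|_m = \| \cdot \|_{\textnormal{op}}$ as norms on $A$.

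I do not expect any genuine obstacle here: the entire substantive content lies in the preceding proposition, whose proof already invokes the existence of a state achieving the operator norm on a normal element (via \cite[Theorem~3.3.6]{Mur}). The only new ingredient is the elementary observation that in a commutative $*$-algebra every element commutes with its adjoint, so that Proposition~\ref{min-op-norms-coincide} applies unconditionally. Consequently I would expect the proof to consist of a single short paragraph rather than any extended argument.
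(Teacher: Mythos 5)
Your proof is correct and is exactly the paper's intended argument: the corollary follows immediately from Proposition~\ref{min-op-norms-coincide} because every element of a commutative $C^*$-algebra is normal, which is why the paper states it without a separate proof.
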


\begin{corollary}
If $X \in A$ is self-adjoint, then $\| X \|_\textnormal{op} = \| X \|$.  Thus the operator norm $\| \cdot \|_\textnormal{op}$ on $A$ restricts to the order norm $\| \cdot \|$ on $A_h$.
\end{corollary}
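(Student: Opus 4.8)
The plan is to deduce this corollary immediately from two facts already established: that the minimal order norm coincides with the operator norm on normal elements (Proposition~\ref{min-op-norms-coincide}), and that the minimal order norm restricts to the order norm on the self-adjoint part (Theorem~\ref{min-norm}(2)). The bridge between these is the minimal order norm $\|\cdot\|_m$, and the only observation needed to invoke both results at once is that a self-adjoint element is normal.

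Concretely, first I would note that if $X = X^*$, then $XX^* = X^2 = X^*X$, so $X$ is a normal operator. Consequently Proposition~\ref{min-op-norms-coincide} applies and yields $\|X\|_m = \|X\|_{\textnormal{op}}$. At the same time, $X \in A_h$, so by Theorem~\ref{min-norm}(2) we have $\|X\|_m = \|X\|$, where $\|\cdot\|$ denotes the order norm on $A_h$. Chaining these equalities gives $\|X\|_{\textnormal{op}} = \|X\|_m = \|X\|$, which is precisely the desired identity. The final sentence of the statement is then a mere rephrasing, since asserting that $\|\cdot\|_{\textnormal{op}}$ restricts to the order norm on $A_h$ is the same as asserting $\|X\|_{\textnormal{op}} = \|X\|$ for every self-adjoint $X$.

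There is essentially no obstacle here: the genuinely nontrivial content has already been absorbed into Proposition~\ref{min-op-norms-coincide}, whose proof invokes the existence of a state attaining the operator norm on a normal element (via \cite[Theorem~3.3.6]{Mur}). The corollary itself is a purely formal two-line chain of equalities through $\|\cdot\|_m$, with no calculation to grind through, once one recognizes that self-adjointness implies normality.
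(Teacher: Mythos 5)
Your proof is correct and is precisely the argument the paper intends: since the corollary follows Proposition~\ref{min-op-norms-coincide}, the implicit reasoning is exactly your chain $\| X \|_{\textnormal{op}} = \| X \|_m = \| X \|$, using normality of self-adjoint elements and Theorem~\ref{min-norm}(2). Nothing is missing.
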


Since $\| X \|_\textnormal{op} = \| X^* \|_\textnormal{op}$, we see that $\| \cdot \|_\textnormal{op}$ is a $*$-norm.  Because $\| \cdot \|_\textnormal{op}$ restricts to the order norm on $A_h$, this shows that $\| \cdot \|_\textnormal{op}$ is an order norm on $(A, A^+)$.  Thus we have the following.

\begin{corollary}
The operator norm $\| \cdot \|_{\textnormal{op}}$ is an order norm on $A$.  Consequently, for all $X \in A$ we have that $\| X \|_m \leq \| X \|_{\textnormal{op}} \leq \| X \|_M$. 
\end{corollary}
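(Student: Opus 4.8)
The plan is to recognize the statement as a short assembly of results already established: the strategy is to verify that $\|\cdot\|_{\textnormal{op}}$ satisfies the definition of an order norm and then to quote the extremal property of $\|\cdot\|_m$ and $\|\cdot\|_M$. No direct estimation of the operator norm against the minimal and maximal norms is needed.

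First I would check the two defining requirements of an order norm in the sense of Definition~\ref{order-norms-def}. The operator norm is of course a norm on $A$, and the $C^*$-identity gives $\|X^*\|_{\textnormal{op}} = \|X\|_{\textnormal{op}}$, so $\|\cdot\|_{\textnormal{op}}$ is a $*$-norm. The second requirement is that it restrict to the order norm $\|\cdot\|$ on the hermitian part $A_h$; this is precisely the content of the preceding corollary, which establishes $\|X\|_{\textnormal{op}} = \|X\|$ for self-adjoint $X$. Since $A_h$ is the self-adjoint part of $A$, these two facts together show that $\|\cdot\|_{\textnormal{op}}$ is a $*$-norm on $A$ whose restriction to $A_h$ agrees with the order norm, hence it is an order norm on $(A, A^+)$.

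For the stated inequality I would then simply invoke Remark~\ref{min-max-order-norms}, which records that any order seminorm $|||\cdot|||$ on an ordered $*$-vector space satisfies $\|v\|_m \leq |||v||| \leq \|v\|_M$ for all $v$. Applying this with $|||\cdot||| = \|\cdot\|_{\textnormal{op}}$ and $v = X$ yields $\|X\|_m \leq \|X\|_{\textnormal{op}} \leq \|X\|_M$ for every $X \in A$, as claimed.

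There is no genuine obstacle here: the substantive work was carried out in Proposition~\ref{min-op-norms-coincide} and the self-adjoint corollary, where the operator norm was identified with the minimal norm on normal elements and with the order norm on $A_h$. The only point requiring care is to confirm the two hypotheses of Definition~\ref{order-norms-def} separately — the $*$-property and the restriction to $A_h$ — before quoting the sandwich bound, rather than attempting to prove the inequality by hand.
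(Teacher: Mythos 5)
Your proposal is correct and follows essentially the same route as the paper: the paper likewise observes that $\| \cdot \|_{\textnormal{op}}$ is a $*$-norm (since $\|X\|_{\textnormal{op}} = \|X^*\|_{\textnormal{op}}$) restricting to the order norm on $A_h$ by the preceding corollary, hence is an order norm, and then the sandwich inequality is immediate from the extremal characterizations of $\| \cdot \|_m$ and $\| \cdot \|_M$.
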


Recall that for $X \in B(H)$ the {\em numerical radius} of $X$, denoted $w(X)$, is defined by
$$w(X) := \sup \{ | \langle Xh,h \rangle |: h \in H \text{ and } \|h\|=1 \}.$$

\begin{proposition} Let $A \subseteq B(H)$ be a $C^*$-algebra containing the unit $e=I_H$. Then for any $X \in A$, we have $\|X\|_m = w(X)$.
\end{proposition}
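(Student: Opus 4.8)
The plan is to prove the two inequalities $w(X) \le \|X\|_m$ and $\|X\|_m \le w(X)$ separately. For the first, I would observe that each unit vector $h \in H$ gives rise to a \emph{vector state}: defining $f_h(Y) := \langle Yh, h \rangle$ for $Y \in A$, one checks that $f_h(e) = \|h\|^2 = 1$ and that $f_h(S) = \langle Sh, h \rangle \ge 0$ whenever $S \in A^+ = A \cap B(H)^+$, so $f_h$ is a state on $A$ in the sense of this paper. Hence $|\langle Xh, h \rangle| = |f_h(X)| \le \|X\|_m$ for every unit vector $h$, and taking the supremum over $h$ yields $w(X) \le \|X\|_m$.

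The reverse inequality is the substantive part, since an arbitrary state of $A$ need not be a vector state (nor even the restriction of a normal state of $B(H)$). I would show directly that $|f(X)| \le w(X)$ for every state $f$ of $A$ by reducing to self-adjoint elements via a rotation. Fix $\lambda \in \C$ with $|\lambda| = 1$ and set $S := \operatorname{Re}(\lambda X) = (\lambda X + \overline{\lambda} X^*)/2$, a self-adjoint element of $A$. Writing $M := \max \sigma(S)$ and $m := \min \sigma(S)$, the spectral theorem gives the operator inequalities $mI \le S \le MI$, which, since $S, I \in A$ and $A^+ = A \cap B(H)^+$, hold in the order of $A$; that is, $MI - S \in A^+$ and $S - mI \in A^+$. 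Because $f$ is positive and unital, this forces $m \le f(S) \le M$.

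Next I would use Corollary~\ref{pos-preserves-star}, which gives $f(X^*) = \overline{f(X)}$, to compute $f(S) = \operatorname{Re}(\lambda f(X))$; and I would note that $M = \sup_{\|h\|=1} \langle Sh, h \rangle$ and $m = \inf_{\|h\|=1} \langle Sh, h \rangle$, so that both $|M|$ and $|m|$ are bounded by $\sup_{\|h\|=1} |\langle Xh, h \rangle| = w(X)$ (using $|\operatorname{Re}(\lambda \langle Xh, h \rangle)| \le |\langle Xh, h \rangle|$). Combining these, $|\operatorname{Re}(\lambda f(X))| \le w(X)$ for every unimodular $\lambda$. Choosing $\lambda$ so that $\lambda f(X) = |f(X)|$ then gives $|f(X)| \le w(X)$, and taking the supremum over all states $f$ yields $\|X\|_m \le w(X)$.

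The main obstacle is the reverse inequality: the difficulty is that $\|X\|_m$ is a supremum over \emph{all} states, not merely the geometrically transparent vector states, so one cannot directly invoke the definition of $w(X)$. The rotation-and-real-part device, together with the identification of the numerical bounds of a self-adjoint operator with its extreme spectral values, is precisely what bridges this gap and lets the self-adjoint order inequality $mI \le S \le MI$ do the work.
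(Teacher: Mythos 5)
Your proof is correct, and while the easy half coincides with the paper's (both use vector states $f_h(Y) = \langle Yh,h\rangle$ to get $w(X) \le \|X\|_m$), your argument for the substantive inequality $\|X\|_m \le w(X)$ takes a genuinely different route. The paper's proof is indirect: it observes that $w$ is a $*$-norm, cites the fact that $w(S) = \|S\|_{\textnormal{op}}$ for self-adjoint $S$, combines this with its earlier corollary that $\|\cdot\|_{\textnormal{op}}$ restricts to the order norm on $A_h$ to conclude that $w$ is an \emph{order norm} on $A$, and then invokes the minimality of $\|\cdot\|_m$ among all order norms (Theorem~\ref{min-norm}(3)). You instead prove the state-by-state estimate $|f(X)| \le w(X)$ directly: the rotation $S = \operatorname{Re}(\lambda X)$, the spectral bounds $mI \le S \le MI$, positivity and unitality of $f$, the identity $f(S) = \operatorname{Re}(\lambda f(X))$ via Corollary~\ref{pos-preserves-star}, and the identification of $m, M$ with the numerical bounds of $S$. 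This is in effect an inlining of the rotation trick that the paper buries inside its proof of Theorem~\ref{min-norm}(3), specialized to the norm $w$, with the spectral inequality $mI \le S \le MI$ replacing the abstract order-norm axiom. What your approach buys is self-containedness: it needs no order-norm machinery and no black-box citation, only the standard fact that the extreme spectral values of a self-adjoint operator equal its numerical bounds (which is the same fact the paper cites from Conway, but used in a more granular form). What the paper's approach buys is brevity given the framework already built, and the reusable conclusion that the numerical radius is itself an order norm on $A$, which is of independent interest in the paper's taxonomy of order norms.
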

\begin{proof} 
For any $h \in H$ with $\| h \| = 1$ define $s_h : A \to \C$ by $s_h (X) := \langle Xh, h \rangle$.  One can easily verify that $s_h$ is a state on $A$.  Thus 
\begin{align*}
w(X) &= \sup \{ s_h(X) : h \in H \text{ and } \|h\|=1 \} \\
&\leq \sup \{ |f(X)| : f: A\to \C \text{ is a state } \} = \| X \|_m.\end{align*}
Conversely, it is straightforward to show that $w(X)$ is a $*$-norm on $A$.  In addition, if $X \in A_h$ then it follows from \cite[Theorem~II.2.13]{Co2} that $w(X) = \| X \|_\textnormal{op}$.  Thus $w(X)$ is an order norm on $A$, and it follows from Theorem~\ref{min-norm} that $\| X \|_m \leq w(X)$.  Hence $\| X\|_m=w(X)$.
\end{proof}

\begin{corollary}
If $X \in A$ is normal, then $w(X) = \| X \|_\textnormal{op}$.
\end{corollary}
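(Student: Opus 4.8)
The plan is to observe that this corollary is an immediate consequence of two results already in hand, obtained by chaining their conclusions. The preceding proposition establishes that $\|X\|_m = w(X)$ for \emph{every} $X \in A$, with no normality assumption whatsoever. Separately, Proposition~\ref{min-op-norms-coincide} establishes that $\|X\|_m = \|X\|_{\textnormal{op}}$ precisely under the hypothesis that $X$ is normal. Thus for a normal $X$ the desired identity falls out by transitivity.

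Concretely, I would take $X \in A$ normal and write the chain
$$w(X) = \|X\|_m = \|X\|_{\textnormal{op}},$$
justifying the first equality by the proposition immediately preceding (valid for all elements), and the second equality by Proposition~\ref{min-op-norms-coincide} (which applies because $X$ is normal). Comparing the two ends yields $w(X) = \|X\|_{\textnormal{op}}$, as claimed.

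There is essentially no obstacle to overcome here: all of the substantive work has already been carried out in the two prior results, one of which supplies the general equality $\|X\|_m = w(X)$ and the other the normal-case equality $\|X\|_m = \|X\|_{\textnormal{op}}$. The only point meriting a moment's care is to confirm that the normality hypothesis invoked matches exactly the hypothesis required by Proposition~\ref{min-op-norms-coincide}, which it does; no further estimate, limiting argument, or construction is needed.
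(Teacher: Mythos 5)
Your proposal is correct and is exactly the argument the paper intends: the corollary follows immediately by chaining the preceding proposition ($\|X\|_m = w(X)$ for all $X \in A$) with Proposition~\ref{min-op-norms-coincide} ($\|X\|_m = \|X\|_{\textnormal{op}}$ for normal $X$), which is why the paper states it without proof.
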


We now turn our attention to the decomposition norm.

\begin{proposition} Let $A \subseteq B(H)$ be a unital C*-algebra. Then for every $X \in A,$ we have that $\|X \|_{\textnormal{op}} \leq \|X\|_{\dec}$.
\end{proposition}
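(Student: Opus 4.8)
The plan is to unwind the definition of $\|\cdot\|_\dec$ and reduce the inequality to a single-decomposition estimate. Fix $X \in A$ and an arbitrary representation $X = \sum_{i=1}^n \lambda_i p_i$ with $p_i \in A^+$ and $\lambda_i \in \C$, and set $P := \sum_{i=1}^n |\lambda_i| p_i$. Since each $|\lambda_i| \ge 0$ and $p_i \ge 0$, the element $P$ lies in $A^+$ and in particular is self-adjoint, so by the corollary that the operator norm restricts to the order norm on $A_h$ we have $\|P\| = \|P\|_{\textnormal{op}}$, where $\|\cdot\|$ is the order norm on $A_h$ appearing inside the definition of $\|\cdot\|_\dec$. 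Thus it suffices to prove the single inequality $\|X\|_{\textnormal{op}} \le \|P\|_{\textnormal{op}}$; taking the infimum over all such decompositions then yields $\|X\|_{\textnormal{op}} \le \|X\|_\dec$.

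To prove $\|X\|_{\textnormal{op}} \le \|P\|_{\textnormal{op}}$, I would estimate the matrix coefficients of $X$ directly, using that $\|X\|_{\textnormal{op}} = \sup\{ |\langle Xh, k\rangle| : h,k \in H, \ \|h\|=\|k\|=1\}$. For unit vectors $h,k$ we have $|\langle Xh,k\rangle| \le \sum_i |\lambda_i|\, |\langle p_i h, k\rangle|$. The key point is that each $p_i$ is positive, so $\langle p_i \cdot, \cdot\rangle$ is a positive semidefinite sesquilinear form and the Cauchy--Schwarz inequality gives $|\langle p_i h, k\rangle| \le \langle p_i h, h\rangle^{1/2} \langle p_i k, k\rangle^{1/2}$. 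Substituting and then applying the Cauchy--Schwarz inequality a second time, now to the finite sum over $i$ (splitting $|\lambda_i| = |\lambda_i|^{1/2}\cdot|\lambda_i|^{1/2}$), I obtain
$$|\langle Xh, k\rangle| \le \Big(\sum_i |\lambda_i| \langle p_i h, h\rangle\Big)^{1/2}\Big(\sum_i |\lambda_i| \langle p_i k, k\rangle\Big)^{1/2} = \langle Ph, h\rangle^{1/2}\,\langle Pk, k\rangle^{1/2}.$$
Since $P \ge 0$ and $\|h\|=\|k\|=1$, both factors are bounded by $\|P\|_{\textnormal{op}}^{1/2}$, whence $|\langle Xh,k\rangle| \le \|P\|_{\textnormal{op}}$. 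Taking the supremum over $h,k$ gives $\|X\|_{\textnormal{op}} \le \|P\|_{\textnormal{op}}$, completing the argument.

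The step I expect to be the main obstacle is obtaining the operator norm (rather than merely the numerical radius) on the left-hand side. A naive bound using the triangle inequality, or the identity $\|X\|_m = w(X)$ established earlier, only controls $w(X)$, which can be strictly smaller than $\|X\|_{\textnormal{op}}$; this would reprove the already-known inequality $\|X\|_m \le \|X\|_\dec$ but not the sharper operator-norm statement. The device that overcomes this is the two-fold use of Cauchy--Schwarz with the \emph{off-diagonal} coefficients $\langle Xh, k\rangle$ for $h \ne k$, exploiting the positivity of each $p_i$ to factor $\langle p_i h, k\rangle$ through the diagonal forms $\langle p_i h, h\rangle$ and $\langle p_i k, k\rangle$.
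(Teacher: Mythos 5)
Your proposal is correct and takes essentially the same route as the paper: both arguments bound $|\langle Xh,k\rangle|$ by a two-fold Cauchy--Schwarz estimate (the paper factors $\langle P_i h, k\rangle = \langle P_i^{1/2}h, P_i^{1/2}k\rangle$ where you invoke Cauchy--Schwarz for the positive sesquilinear form $\langle p_i\,\cdot,\cdot\rangle$ directly, which is the same device), arriving at $|\langle Xh,k\rangle| \le \langle Ph,h\rangle^{1/2}\langle Pk,k\rangle^{1/2} \le \|P\|_{\textnormal{op}}$ and then taking the infimum over decompositions. Your explicit remark that $\|P\| = \|P\|_{\textnormal{op}}$ for the self-adjoint element $P$, justifying the passage from the operator-norm bound to the order norm appearing in the definition of $\|\cdot\|_{\dec}$, is a point the paper leaves implicit.
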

\begin{proof}
Let $X = \sum_i \lambda_i P_i$ with $P_i \ge 0$ and $\lambda_i \in \mathbb{C}$.
For any unit vectors, $h,k \in H,$ we have that
\begin{align*}
&|\langle Xh,k \rangle| \\
\leq &\sum_i |\lambda_i||\langle P_i^{1/2}h, P_i^{1/2}k \rangle| \leq \sum_i |\lambda_i| \| P_i^{1/2} h \| \, \| P_i^{1/2} k \| \\
\leq &\sqrt{ \sum_i |\lambda_i| \|P_i^{1/2}h\|^2} \sqrt{ \sum_i |\lambda_i| \|P_i^{1/2}k\|^2} = \sqrt{ \sum_i \langle |\lambda_i|P_ih,h \rangle}
\sqrt{\sum_i \langle |\lambda_i|P_i k,k \rangle } \\
= &\left\langle \left( \sum_i | \lambda_i | P_i \right) h, h \right\rangle^{1/2}  \left\langle \left( \sum_i | \lambda_i | P_i \right) k, k \right\rangle^{1/2} \le \left\| \sum_i
|\lambda_i| P_i \right\|_{\textnormal{op}}. 
\end{align*}
Taking the infimum over the right hand side yields $|\langle Xh,k \rangle| \le \|X\|_{\dec}$, and hence $\|X\|_\textnormal{op} \leq \|X\|_{\dec}.$
\end{proof}

\begin{proposition} Let $A$ be a unital commutative C*-algebra, then
$\|f\|_m = \|f\|_{\dec}$ for every $f \in A.$
\end{proposition}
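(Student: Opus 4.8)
The plan is to prove the two inequalities $\|f\|_m \le \|f\|_{\dec}$ and $\|f\|_{\dec} \le \|f\|_m$ separately. The first is automatic: the decomposition seminorm is an order seminorm, so $\|f\|_m \le \|f\|_{\dec}$ by Remark~\ref{min-max-order-norms}. For the reverse I would first invoke the Gelfand--Naimark theorem (see \cite{Mur}) to identify the unital commutative C*-algebra $A$ with $C(X)$ for a compact Hausdorff space $X$, under a $*$-isomorphism carrying $A^+$ to $C(X)^+$ and $\|\cdot\|_{\textnormal{op}}$ to the sup norm $\|\cdot\|_\infty$. Since $\|\cdot\|_m = \|\cdot\|_{\textnormal{op}}$ by Corollary~\ref{com-minimal-op-norm-equal}, and the decomposition seminorm is defined purely from the order and norm data, it transports across the isomorphism, so the claim reduces to showing $\|f\|_{\dec} \le \|f\|_\infty$ for every $f \in C(X)$. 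By homogeneity of the seminorms it suffices to treat $f$ with $\|f\|_\infty \le 1$ and prove $\|f\|_{\dec} \le 1$, applying the result to $f/\|f\|_\infty$ for the general case; here $f$ takes values in the closed unit disk $\overline{\mathbb{D}}$.

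The heart of the argument is a continuous decomposition of points of the disk into points of the circle. Fix $n \in \N$ and let $\zeta_k = e^{2\pi i k/n}$, $k=0,\dots,n-1$, be the $n$-th roots of unity; their convex hull is the regular polygon $P_n$, whose inscribed disk has radius $\rho_n = \cos(\pi/n)$, so that the closed disk of radius $\rho_n$ lies in $P_n$. Triangulating $P_n$ by the fan of triangles $\operatorname{conv}(0,\zeta_k,\zeta_{k+1})$ and taking barycentric coordinates on each triangle produces functions $\gamma_k : P_n \to [0,\infty)$ which agree on shared edges (where the two ``outer'' coordinates vanish) and are therefore globally continuous, and which satisfy $w = \sum_k \gamma_k(w)\,\zeta_k$ together with $\sum_k \gamma_k(w) \le 1$ for all $w \in P_n$, the coordinate at the central vertex $0$ absorbing the deficit.

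To assemble the decomposition I would set $\lambda_k := \zeta_k$ and $p_k := \rho_n^{-1}\,\gamma_k \circ (\rho_n f)$. Since $|\rho_n f(x)| \le \rho_n$, the map $\rho_n f$ sends $X$ into $P_n$, so each $p_k$ is a nonnegative continuous function, i.e. $p_k \in C(X)^+$; moreover $f = \sum_k \lambda_k p_k$ and $|\lambda_k| = 1$ give $\sum_k |\lambda_k| p_k = \rho_n^{-1}\sum_k (\gamma_k \circ (\rho_n f)) \le \rho_n^{-1}$. Hence $\|f\|_{\dec} \le \| \sum_k |\lambda_k| p_k \|_\infty \le \rho_n^{-1} = \sec(\pi/n)$. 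Letting $n \to \infty$ yields $\|f\|_{\dec} \le 1$, and by homogeneity $\|f\|_{\dec} \le \|f\|_\infty$ for all $f$, completing the reverse inequality and hence the equality $\|f\|_m = \|f\|_{\dec}$.

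I expect the main obstacle to be the continuous-selection step: the coefficients $\gamma_k$ must depend continuously on the point of the disk so that, after composition with $f$, the resulting $p_k$ are genuine continuous positive elements of $C(X)$ rather than merely pointwise data. The fan triangulation makes this explicit, but one should verify the edge-matching that guarantees global continuity, the containment of the radius-$\rho_n$ disk in $P_n$, and the bound $\sum_k \gamma_k \le 1$; the factor $\sec(\pi/n) \to 1$ is precisely what forces equality in the limit.
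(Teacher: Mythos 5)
Your proof is correct, but it takes a genuinely different route from the paper's. Both arguments reduce to $C(X)$ via Gelfand--Naimark together with Corollary~\ref{com-minimal-op-norm-equal}, and both obtain $\|f\|_m \le \|f\|_{\dec}$ for free from the minimality of $\| \cdot \|_m$ among order seminorms; the difference lies in how the decomposition witnessing $\|f\|_{\dec} \le \|f\|_m$ is produced. The paper \emph{samples} $f$: it takes a finite open cover on which $f$ oscillates by less than $\epsilon$, a subordinate partition of unity $\{p_i\}$, and the values $\lambda_i = f(x_i)$, producing an \emph{approximate} decomposition $f = \sum_i \lambda_i p_i + g$ with $\bigl\| \sum_i |\lambda_i| p_i \bigr\| \le \|f\|_m$ and $\|g\|_m < \epsilon$; the error term is then absorbed using the bounded equivalence of order seminorms ($\|g\|_{\dec} \le \|g\|_M \le 2\|g\|_m$, from Proposition~\ref{all-order-norms-equiv}), giving $\|f\|_{\dec} \le \|f\|_m + 2\epsilon$. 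You instead build an \emph{exact} decomposition $f = \sum_k \zeta_k p_k$ by composing $f$ with continuous barycentric coordinates on the fan triangulation of the regular $n$-gon, so the cost is not an additive error but the multiplicative inradius factor $\sec(\pi/n) \to 1$. Your geometric bookkeeping does check out: the apothem of the $n$-gon is $\cos(\pi/n)$, the barycentric coordinates agree on shared edges $[0,\zeta_k]$ (and vanish identically on triangles not having $\zeta_k$ as a vertex), so the pasting lemma gives continuity of each $\gamma_k$, and the coordinate at the central vertex $0$ accounts for the deficit in $\sum_k \gamma_k \le 1$. In exchange for this extra setup, your argument avoids any appeal to the $2$-equivalence of the decomposition and minimal seminorms and makes the near-optimal decompositions completely explicit: the coefficients $\zeta_k$ are fixed roots of unity independent of $f$, with all the $f$-dependence pushed into the positive functions $p_k$, whereas in the paper the coefficients are the sampled values of $f$ itself. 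Both proofs are of comparable length; the paper's is softer, yours more constructive.
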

\begin{proof} Write $A=C(X)$ for some compact Hausdorff space $X$, and let $f \in C(X).$  Given any $\epsilon >0,$ we may pick a finite open cover $\{U_i \}_{i=1}^n$ of $X$ and points $x_i \in U_i$ such that $|f(x) - f(x_i)| < \epsilon$ for $x \in U_i.$ We may now take a partition of unity subordinate to this open cover consisting of non-negative continuous functions $\{p_i \}_{i=1}^n$ such that $\sum_{i=1}^n p_i(x) =1$ for every $x \in X$, and with $p_i(x) =0$ when $x \notin U_i.$  If we set $\lambda_i := f(x_i)$, then we have that
$$\left| f(x) - \sum_{i=1}^n \lambda_i p_i(x) \right| = \left| \sum_{i=1}^n (f(x) - \lambda_i)p_i(x) \right| \leq \sum_{i=1}^n |f(x) - \lambda_i|p_i(x).$$
Now this latter sum can be seen to be less than $\epsilon,$ since if $p_i(x) \ne 0,$ then $x \in U_i$ and hence $|f(x) - \lambda_i| < \epsilon.$
If we let $g =f - \sum_i \lambda_i p_i$, then $\|g\| < \epsilon,$ and hence
$$\| f \|_{\dec} \leq \left\| \sum_{i=1}^n \lambda_i p_i \right\|_{\dec} + \|g\|_{\dec} \leq \left\| \sum_{i=1}^n |\lambda_i| p_i \right\| + 2\|g\|_m  \leq \|f\|_m + 2 \epsilon.$$
Since $\epsilon$ was arbitrary, the proof is complete.
\end{proof}

\begin{remark}
To summarize some of the above results:  If $A$ is a unital $C^*$-algebra, then for any $a \in A$ we have that $\| a \|_m \leq \| a \|_\textnormal{op} \leq \| a \|_\dec \leq \| a \|_M$.  Furthermore, if $A$ is commutative we also have that $\| \cdot \|_m =  \| \cdot \|_\textnormal{op} = \| \cdot \|_\dec$.
\end{remark}

\subsection{A characterization of equality of $\| \cdot \|_m$ and $\| \cdot \|_M$} \label{min-max-equal-subsec}

\begin{theorem} \label{char-min-max-equal-thm}
Let $(V,V^+)$ be an ordered $*$-vector space with Archimedean order unit $e$.   Then the minimal order norm $\| \cdot \|_m$ and the maximal order norm $\| \cdot \|_M$ are equal if and only if $V$ is isomorphic to the complex numbers $\C$.
\end{theorem}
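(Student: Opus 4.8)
The plan is to prove both implications, handling the forward direction by contraposition. The easy implication is that $V \cong \C$ forces $\|\cdot\|_m = \|\cdot\|_M$: in that case $V_h = \R e$ and every element is $\lambda e$, there is a unique state, so $\|\lambda e\|_m = |\lambda|$, while the trivial decomposition $\lambda e = \lambda \cdot e$ gives $\|\lambda e\|_M \leq |\lambda|\,\|e\| = |\lambda|$; combined with the general inequality $\|\cdot\|_m \leq \|\cdot\|_M$ from Remark~\ref{min-max-order-norms}, equality follows.

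For the substantive direction I would assume $V \not\cong \C$, equivalently $\dim_\R V_h \geq 2$ (since $V = V_h \oplus i V_h$), and produce a single vector on which the two norms differ. First I would choose $x \in V_h$ that is not a real multiple of $e$, and set $\alpha = \sup\{r : re \le x\}$ and $\beta = \inf\{s : x \le se\}$ as in Theorem~\ref{interval-thm}. A preliminary step is to check that $\alpha < \beta$: if $\alpha = \beta$, then by Remark~\ref{interval-rem} every state takes the same value on $x$, so $x - \alpha e$ lies in the kernel of every state and hence $x = \alpha e$ by Proposition~\ref{states-sep-pts} (this is exactly where the Archimedean hypothesis enters), contradicting independence. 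Replacing $x$ by $\tfrac{2}{\beta - \alpha}\bigl(x - \tfrac{\alpha+\beta}{2}e\bigr)$, which is still independent of $e$, I may assume $\alpha = -1$ and $\beta = 1$; then $\|x\| = 1$ and, by Theorem~\ref{interval-thm}, there are states $f_+, f_-$ on $V_h$ with $f_\pm(x) = \pm 1$ and $f_\pm(e) = 1$.

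The crux is the test element $v_0 := e + ix$ together with a lower bound on $\|v_0\|_M$. On the minimal side, $\|v_0\|_m = \sup_f |f(e) + i f(x)| = \sup_{-1 \le t \le 1}\sqrt{1+t^2} = \sqrt 2$, using that $\{f(x) : f \text{ a state}\} = [-1,1]$. On the maximal side I would exhibit a $\C$-linear functional that is contractive on $(V_h,\|\cdot\|)$, namely $h := \tfrac{1-i}{2}\,\widetilde{f_+} + \tfrac{1+i}{2}\,\widetilde{f_-}$, where $\widetilde{f_\pm}$ are the state complexifications of Proposition~\ref{complexification-linear-positive}. For $u \in V_h$, writing $s = f_+(u)$ and $t = f_-(u)$, a direct computation gives $|h(u)|^2 = \tfrac{s^2+t^2}{2} \le \max(s^2,t^2) \le \|u\|^2$, since $|f_\pm(u)| \le \|u\|$ by Proposition~\ref{seminorm}. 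Consequently, for any decomposition $v = \sum_i \lambda_i u_i$ with $u_i \in V_h$ we have $|h(v)| \le \sum_i |\lambda_i|\,|h(u_i)| \le \sum_i |\lambda_i|\,\|u_i\|$, and taking the infimum over such decompositions yields $|h(v)| \le \|v\|_M$. Since $h(e) = 1$ and $h(x) = \tfrac{1-i}{2} - \tfrac{1+i}{2} = -i$, we get $h(v_0) = 1 + i(-i) = 2$. Therefore $\|v_0\|_M \ge 2 > \sqrt 2 = \|v_0\|_m$, so the norms are unequal, completing the contrapositive.

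Conceptually, the two extreme states assemble into a unital positive map $v \mapsto (\widetilde{f_+}(v), \widetilde{f_-}(v))$ onto the two-dimensional commutative system $\C^2$, on which the maximal order norm strictly dominates the minimal one, and $h$ is the pullback of a norming functional for $\|\cdot\|_M$ on $\C^2$. I expect the main obstacle to be extracting a usable lower bound on $\|\cdot\|_M$, which is an infimum; this is overcome by the elementary duality estimate $|h(v)| \le \|v\|_M$ valid for any $h$ with $|h|_{V_h} \le \|\cdot\|$, together with the key observation that the specific combination $h$ has order-dual norm $1$ (rather than the crude triangle-inequality value $\sqrt 2$) on $V_h$, precisely because $|h(u)|$ depends on $f_+(u)$ and $f_-(u)$ only through $\max(|f_+(u)|,|f_-(u)|)$.
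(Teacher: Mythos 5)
Your proof is correct, but it takes a genuinely different route from the paper's. For the hard direction the paper first invokes its complex version of Kadison's theorem (Theorem~\ref{characterize-func-sys}) to realize $V$ as a function system in $C(X)$ with $\|\cdot\|_m$ equal to the sup norm, builds a test function $F = f + ig$ from a positive $f \notin \C \cdot 1$, and obtains the lower bound $\|F\|_M \ge \sqrt{2}$ by evaluating an arbitrary decomposition $F = \sum_j e^{i\theta_j} r_j$ at two carefully chosen points of $X$ and using $|\cos\theta_j| + |\sin\theta_j| \le \sqrt{2}$, having arranged $\|F\|_m < \sqrt{2}$. You instead stay entirely abstract: Theorem~\ref{interval-thm} and Remark~\ref{interval-rem} (with Proposition~\ref{states-sep-pts} supplying the Archimedean input, applied to $x - \alpha e$) produce two states $f_\pm$ separating $x$ from $\R e$, and your lower bound for the infimum defining $\|\cdot\|_M$ comes from a clean duality observation: any $\C$-linear $h$ with $|h(u)| \le \|u\|$ for $u \in V_h$ satisfies $|h(v)| \le \|v\|_M$ for all $v \in V$. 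Applying this to $h = \tfrac{1-i}{2}\widetilde{f_+} + \tfrac{1+i}{2}\widetilde{f_-}$, whose contractivity on $V_h$ rests on the identity $|h(u)|^2 = \tfrac{1}{2}\bigl(f_+(u)^2 + f_-(u)^2\bigr) \le \|u\|^2$, gives $\|e+ix\|_M \ge h(e+ix) = 2$, while $\|e+ix\|_m = \sqrt{2}$ by Proposition~\ref{pos-iff-restrict-pos} and Remark~\ref{interval-rem}; all of these computations check out, as does your normalization making $[\alpha,\beta] = [-1,1]$ and your easy direction (which the paper instead deduces from Corollary~\ref{com-minimal-op-norm-equal}). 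Your route buys three things: it avoids the representation theorem and the weak-$*$ compactness behind it; it yields a sharper quantitative separation ($2$ versus $\sqrt{2}$, rather than the paper's $\ge \sqrt{2}$ versus $< \sqrt{2}$); and it isolates the mechanism correctly, since your map $v \mapsto (\widetilde{f_+}(v), \widetilde{f_-}(v))$ pushes the problem onto $\C^2$, where the gap $\|(1,i)\|_m = 1 < \sqrt{2} = \|(1,i)\|_M$ is exactly what the paper records in its first example of \S\ref{convex-comb-subsec}. What the paper's concrete approach buys in exchange is that it needs no dual-functional estimate at all, only pointwise trigonometry in $C(X)$, and it showcases the function-system machinery developed immediately beforehand.
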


\begin{proof}
If $V \cong \C$, then since $\C$ is a unital commutative $C^*$-algebra, it follows from Corollary~\ref{com-minimal-op-norm-equal} that the minimal order norm coincides with the operator norm.  In addition, for any $v \in \C$ we see that if we write $v = v \cdot 1$, then $v \in \C$ and $1 \in \R$ and by the definition of the maximal order norm we have that $\| v \|_M \leq | v | \| 1 \| = |v|$.  By the maximality of the maximal order norm, we then have that $\| v \|_M = | v |$, and thus the maximal order norm and the operator norm coincide.

Conversely, suppose that $V \ncong \C$.  We shall show that there exists $v \in V$ such that $\| v \|_m < \| v \|_M$.  By Theorem~\ref{characterize-func-sys} we may, without loss of generality, assume that $V$ is a subspace of $C(X)$ for some compact Hausdorff space $X$ with the property that $V = V^*$ and $1 \in V$, and furthermore, the minimal order norm $\| \cdot \|_m$ on $V$ coincides with the sup norm $\| \cdot \|$ on $C(X)$.

Since $V \neq \C \cdot 1$, there exists $f \in V$ with $f \geq 0$ and $f \notin \C \cdot 1$.  By normalizing, we may also also assume that $\| f \| = 1$.  Let $h := f-1$, and define $g := (1/\| h \|) h$.  (Note that since $f \neq 1$ we have that $\| h \| > 0$.)  Then $g \in V$ and since $0 \leq f(x) \leq 1$ for all $x \in X$ we see that $h(x) \leq 0$ and $-1 \leq g(x) \leq 0$ for all $x \in X$.  Furthermore, note that for any $x \in X$ we have that $f(x) = 1$ implies that $g(x) = 0$.  Likewise, if $g(x) = -1$, then $h(x) = -\| h \|$ and $f(x) = 1 - \| h \| < 1$.  Because, $0 \leq f \leq 1$ and $ -1 \leq g \leq 0$ , it follows that for any $x \in X$ we have that $|f(x)| = 1$ implies that $g(x) = 0$; and we also have that $|g(x)| = 1$ implies that $f(x) = 1 - \| h \| < 0$.  If we define $F : X \to \C$ by $F(x) := f(x) + i g(x)$, then the previous sentence implies that for all $x \in X$ we have $$| F(x) | = |f(x) + i g(x)| = \sqrt{ |f(x)|^2 + |g(x)|^2} < \sqrt{ 1^2 + 1^2} = \sqrt{2}.$$  Thus $\| F \|_m = \| F \| < \sqrt{2}$.

We shall now show that $\| F \|_M \geq \sqrt{2}$.  Suppose that $F = \sum_{j=1}^n \lambda_j s_j$ for $\lambda_j \in \C$ and real-valued continuous functions $s_j : X \to \R$, and set $$K := \sum_{j=1}^n | \lambda_j | \, \| s_j \|.$$  Since any $\lambda_j$ may be written as $e^{i\theta_j} | \lambda_j |$ for some $\theta_j \in [0, 2\pi)$, we have that $\lambda_j s_j = e^{i \theta_j} (| \lambda_j| s_j)$ and by setting $r_j := |\lambda_j| s_j$ we may assume that $F = \sum_{j=1}^n e^{i\theta_j} r_j$ for real-valued functions $r_j : X \to \R$ and $$K = \sum_{j=1}^n |e^{i\theta_j} | \| r_j \| = \sum_{j=1}^n \| r_j \|.$$  Because $X$ is compact and $f \geq 0$ with $\|f \|=1$, there exists $x_1 \in X$ such that $f(x_1) = 1$.  Likewise, since $-1 \leq g \leq 0$ and $\| g \| =1$ there exists $x_2 \in X$ such that $g(x_2) = -1$.  Since $F = \sum_{j=1}^n e^{i\theta_j} r_j$ and $F(x_1) = f(x_1) + i g(x_1) = 1 + i g(x_1)$ we have that $1 + i g(x_1) =  \sum_{j=1}^n e^{i\theta_j} r_j(x_1)$.  Equating the real parts of this equation gives $1 = \sum_{j=1}^n ( \cos \theta_j )r_j (x_1)$.  Thus $$1 \leq \sum_{j=1}^n | \cos \theta_j | \, | r_j (x_1) |.$$  Likewise, since $F = \sum_{j=1}^n e^{i\theta_j} r_j$ and $F(x_2) = f(x_2) + i g(x_2) = f(x_2) - i$ we have that $f(x_2) - i =  \sum_{j=1}^n e^{i\theta_j} r_j(x_2)$.  Equating the imaginary parts of this equation gives $-1 = \sum_{j=1}^n ( \sin \theta_j )r_j (x_2)$.  Thus $$1 \leq \sum_{j=1}^n | \sin \theta_j | \, | r_j (x_2) |.$$  Hence we have that 
\begin{align*}
2 &= 1 + 1 \\
&\leq  \sum_{j=1}^n | \cos \theta_j | \, | r_j (x_1) | +  \sum_{j=1}^n | \sin \theta_j | \, | r_j (x_2) | \\
&\leq \sum_{j=1}^n ( |\cos \theta_j | + | \sin \theta_j | ) \max \{ |r_j(x_1)|, |r_j(x_2)| \} \\
&\leq \sum_{j=1}^n \sqrt{2} \max \{ |r_j(x_1)|, |r_j(x_2)| \} \\
&\leq \sum_{j=1}^n \sqrt{2}\| r_j \| \\
&= \sqrt{2} K.
\end{align*}
It follows that $K \geq 2/\sqrt{2} = \sqrt{2}$, and since $\| F \|_M$ equals the infimum taken over all such $K$, we have that $\| F \|_M \geq \sqrt{2}$.  Thus we have shown that $F$ is an element of $V$ with $\| F \|_m < \| F \|_M$.
\end{proof}

\subsection{Convex combinations of $\| \cdot \|_m$ and $\| \cdot \|_M$} \label{convex-comb-subsec}

Let $(V,V^+)$ be an ordered $*$-vector space with Archimedean order unit $e$.  If $V \cong \C$, then $\| \cdot \|_m = \| \cdot \|_M$, and there is a unique order norm on $V$.  When $V \ncong \C$, Theorem~\ref{char-min-max-equal-thm} shows that $\| \cdot \|_m$ is not equal to $\| \cdot \|_M$.  From this one can deduce that there are infinitely many order norms on $V$:  For each $0 \leq t \leq 1$ define $\| \cdot \|_t$ by $$ \| v \|_t := t \| v \|_m + (1-t) \| v \|_M.$$ Then $\| \cdot \|_t$ is a convex combination of the minimal and maximal order norms, and it is straightforward to show that $\| \cdot \|_t$ is an order norm and that the $\| \cdot \|_t$'s are distinct for each $t$.  Thus, when $V \ncong \C$, there are at least a continuum of order norms on $V$.  It is natural to ask whether these are the only order norms.  The following examples show that, in general, the decomposition norm $\| \cdot \|_\dec$ is not a convex combination of the minimal and maximal order norms.  

\begin{example}
Consider $\C^2$ with the usual positive elements $\C^2 = \{ (x,y) : x \geq 0 \text{ and } y \geq 0 \}$ and Archimedean order unit $e = (1,1)$.  Let $\| \cdot \|_m^{\C^2}$,  $\| \cdot \|_\dec^{\C^2}$, and $\| \cdot \|_M^{\C^2}$ denote the minimal, decomposition, and maximal order norms on $\C^2$, respectively.  Since $\C^2$ is a commutative $C^*$-algebra, we have that $\| (z_1, z_2 ) \|_m^{\C^2} = \| (z_1, z_2 ) \|_\dec^{\C^2} = \max \{ |z_1|, |z_2| \}$.  Thus $\| (1, i ) \|_m^{\C^2} = \| (1, i ) \|_\dec^{\C^2} = 1$.  In addition, an argument similar to that in the end of the proof of Theorem~\ref{char-min-max-equal-thm} shows that $\| (1,i) \|_M^{\C^2} \geq \sqrt{2}$, and since $(1,i) = (1+i)(1/2, 1/2) + (1-i) (1/2, -1/2)$ and $| (1+i)| \, \| (1/2, 1/2) \| + |(1-i)|  \, \| (1/2, -1/2) \| = \sqrt{2} (1/2) + \sqrt{2} (1/2) = \sqrt{2}$ we may conclude that $\| (1,i) \|_M^{\C^2} = \sqrt{2}$.
\end{example}

\begin{example}
Consider the $C^*$-algebra $M_2(\C)$ with the usual positive elements and Archimedean order unit $e = I_2$.  Let $\| \cdot \|_m^{M_2}$,  $\| \cdot \|_\dec^{M_2}$, and $\| \cdot \|_M^{M_2}$ denote the minimal, decomposition, and maximal order norms on $M_2(\C)$, respectively.  One can show that for any $\lambda \in \C$ we have $\left\| \left( \begin{smallmatrix} 0 & \lambda \\ 0 & 0 \end{smallmatrix} \right) \right\|_m^{M_2} = \frac{| \lambda |}{2}$, and $\left\| \left( \begin{smallmatrix} 0 & \lambda \\ 0 & 0 \end{smallmatrix} \right) \right\|_\dec^{M_2} = \left\| \left( \begin{smallmatrix} 0 & \lambda \\ 0 & 0 \end{smallmatrix} \right) \right\|_M^{M_2} = | \lambda |$.  In addition, one can show that for any $\lambda, \mu \in \C$ we have $\left\| \left( \begin{smallmatrix} \lambda & 0 \\ 0 & \mu \end{smallmatrix} \right) \right\|_\dec^{M_2} = \| ( \lambda, \mu ) \|_\dec^{\C^2}$, and $\left\| \left( \begin{smallmatrix} \lambda & 0 \\ 0 & \mu \end{smallmatrix} \right) \right\|_M^{M_2} = \| ( \lambda, \mu ) \|_M^{\C^2}$.  It follows from these equalities that
\begin{equation} \label{min-dec-max-1}
\left\| \begin{pmatrix} 0 & 1 \\ 0 & 0 \end{pmatrix} \right\|_m^{M_2} = \frac{1}{2} \quad \text{ and } \quad \left\| \begin{pmatrix} 0 & 1 \\ 0 & 0 \end{pmatrix} \right\|_\dec^{M_2} = \left\| \begin{pmatrix} 0 & 1 \\ 0 & 0 \end{pmatrix}  \right\|_M^{M_2} = 1.
\end{equation}
It also follows that
\begin{equation} \label{min-dec-max-2}
\left\|  \begin{pmatrix} 1 & 0 \\ 0 & i \end{pmatrix} \right\|_\dec^{M_2} = \| ( 1, i ) \|_\dec^{\C^2} = 1 \text{ and } \left\| \begin{pmatrix} 1 & 0 \\ 0 & i \end{pmatrix} \right\|_M^{M_2} = \| ( 1, i ) \|_M^{\C^2} = \sqrt{2}.
\end{equation}
One can see from Eq.~\ref{min-dec-max-1} and Eq.~\ref{min-dec-max-2} that the three norms $\| \cdot \|_m^{M_2}$,  $\| \cdot \|_\dec^{M_2}$, and $\| \cdot \|_M^{M_2}$ are distinct, and furthermore, that $\| \cdot \|_\dec^{M_2}$ is not a convex combination of $\| \cdot \|_m^{M_2}$ and $\| \cdot \|_M^{M_2}$.
\end{example}

\end{document}